\renewcommand{\emptyset}{\varnothing}
\newcommand{\rrVert}{\Vert}
\newcommand{\llVert}{\Vert}
\newcommand{\eqref}[1]{(\ref{#1})}
\newtheorem{theo}[hypo]{Theorem}
\newtheorem{lem}[hypo]{Lemma}
\newcommand{\TV}{\mathrm{TV}}
\newcommand{\eref}{\eqref}
\newcommand{\one}{\mathbf{1}}
\newcommand{\N}{\mathbb{N}}
\newcommand{\R}{\mathbb{R}}
\newcommand{\p}{\mathbb{P}}
\newcommand{\E}{\mathbb{E}}
\begin{document}
\begin{frontmatter}

\title{Exponential ergodicity for Markov processes with random switching}
\runtitle{Exponential ergodicity for Markov processes with random switching}

\begin{aug}
%%%% inicialai - be tarpu
\author[1]{\inits{B.}\fnms{Bertrand} \snm{Cloez}\corref{}\thanksref{1}\ead[label=e1]{bertrand.cloez@univ-mlv.fr}} \and
\author[2]{\inits{M.}\fnms{Martin} \snm{Hairer}\thanksref{2}\ead[label=e2]{M.Hairer@Warwick.ac.uk}}
%%\runauthor{} %% auto
\address[1]{Institut de Math\'{e}matiques de Toulouse,
Universit\'{e} de Toulouse,
F-31062 Toulouse, France.\\
\printead{e1}}
\address[2]{Mathematics Department, University of Warwick, Coventry, CV4 7AL, U.K.\\
\printead{e2}}
\end{aug}

% HISTORY:
\received{\smonth{3} \syear{2013}}
\revised{\smonth{9} \syear{2013}}

% ABSTRACT
%
\begin{abstract}
We study a Markov process with two components: the first component
evolves according to one
of finitely many underlying Markovian dynamics, with a choice of
dynamics that changes
at the jump times of the second component. The second component is
discrete and its jump rates
may depend on the position of the whole process.
%This process can model for example a spatial motion in an
%inhomogeneous random environment.
Under regularity assumptions on the jump rates and Wasserstein
contraction conditions for the underlying dynamics, we provide a
concrete criterion for the convergence to equilibrium in terms of
Wasserstein distance. The proof is based on a coupling argument and a
weak form of the Harris theorem. In particular, we obtain exponential
ergodicity in situations which do not verify any hypoellipticity assumption,
but are not uniformly contracting either. We also obtain a bound in
total variation distance under a suitable regularising assumption. Some
examples are given to illustrate our result, including a class of
piecewise deterministic Markov processes.
\end{abstract}

% KEYWORDS
% visi is mazosios raides ir pagal abecele
%
\begin{keyword}
\kwd{ergodicity}
\kwd{exponential mixing}
\kwd{piecewise deterministic Markov process}
\kwd{switching}
\kwd{Wasserstein distance}
\end{keyword}

\end{frontmatter}

%s1 #&#
\section{Introduction}

Markov processes with switching are intensively used for modelling purposes
in applied subjects like biology \cite{CMMS,CDMR,FGM}, storage
modelling \cite{BKKD},
neuronal activity \cite{PMW,GM12}. This class of Markov processes is
reminiscent of
the so-called iterated random
functions \cite{DF99} or branching processes in random environment
\cite{S68} in the discrete time setting. Several recent works \cite
{BH12,BGM,MLMZ-Horm,MLMZ-Quant,CD08,SY05,GIY,GG96} deal with their
long time behaviour (existence of an invariant probability measure,
Harris recurrence, exponential ergodicity, hypoellipticity\ldots). In
particular, in \cite{BH12,MLMZ-Horm}, the authors provide a kind of
hypoellipticity criterion with H\"{o}rmander-like bracket conditions.
Under these conditions, they deduce the uniqueness and absolute
continuity of the invariant measure, provided that a suitable tightness
condition is satisfied. They also obtain geometric convergence in the
total variation distance. Nevertheless, there are many simple processes
with switching which do not verify any hypoellipticity condition. To
illustrate this fact, let us consider the simple example of \cite
{MLMZ-Quant}. Let $(X,I)$ be the Markov process on $\R^2 \times\{
-1,1\}$ generated by
%
%
%e1.1 #&#
\begin{equation}
\label{eq:exemple-intro} A f(x,i) = - \bigl(x-(i,0)\bigr) \cdot\nabla_x f(x,i)
+ \bigl(f(x,-i) - f(x,i) \bigr).
\end{equation}
This process is ergodic and the first marginal $\pi$ of its invariant
measure is supported on $\R\times\{0\}$. This suggests that, in
general, the law of the
process does not converge to its invariant measure in the total
variation distance. However, it was proved
in \cite{MLMZ-Quant} that it converges in a certain Wasserstein
distance. Let
us recall that the $p${th} Wasserstein distance $\mathcal
{W}^{(p)}$, with $p\geq1$, on a Polish space $(E,d)$ is defined by
\[
\mathcal{W}^{(p)}_d ( \mu_1,\mu_2 )
= \inf_{X_1,X_2} \E \bigl[d(X_1,X_2)^p
\bigr]^{1/p},
\]
for any two probability measures $\mu_1$, $\mu_2$ on $E$, where the
infimum is taken over all pairs of $E$-valued random variables $X_1$,
$X_2$ with respective laws $\mu_1$, $\mu_2$. When $p=1$, we set
$\mathcal{W}_d =\mathcal{W}^{(1)}_d$. The Kantorovich--Rubinstein
duality (\cite{Vil09}, Theorem~5.10) shows that one also has
\[
\mathcal{W}_d ( \mu_1,\mu_2 ) = \sup
_{f
\in\mathrm{Lip}_1} \biggl(\int_E f \,\mathrm{d}
\mu_1 - \int_E f\, \mathrm{d}\mu_2
\biggr),
\]
where $f\dvtx E \mapsto\R$ is in $\mathrm{Lip}_1$ if and only if it
is a $1$-Lipschitz function, namely
\[
\forall x,y \in E,\qquad\bigl |f(x)- f(y)\bigr | \leq d(x,y).
\]
The total variation distance $d_\TV$ can be viewed as the Wasserstein
distance associated to the trivial distance function, namely
\[
d_\TV(\mu_1,\mu_2)= \inf
_{X_1,X_2} \p (X_1 \neq X_2 ) =
\frac{1}{2} \sup_{\Vert f \Vert_\infty\leq1} \biggl( \int_E
f \,\mathrm{d}\mu_1 - \int_E f \,\mathrm{d}
\mu_2 \biggr),
\]
where the infimum is again taken over all random variables $X_1$, $X_2$
with laws $\mu_1$, $\mu_2$. In the present article, we will give
convergence criteria for a general class of switching Markov processes.
These processes are built from the following ingredients:
\begin{itemize}
\item a Polish space $(E,d)$ and a finite set $F$;
\item a family $(Z^{(n)})_{n\in F}$ of $E$-valued strong Markov
processes represented by their semigroups $(P^{(n)})_{n\in F}$, or
equivalently by their generators $(\mathcal{L}^{(n)})_{n\in F}$ with
domains $(\mathcal{D}^{(n)})_{n\in F}$;
%Throughout the whole article, we will assume that each underlying
%dynamics is stochastically continuous; namely
%$$
%$$
%
\item a family $(a(\cdot, i, j))_{i,j\in F}$ of non-negative functions
on $E$.
\end{itemize}

We are interested by the process $(\mathbf{X}_t)_{t\geq0}= (X_t,
I_t)_{t\geq0}$, defined on $\mathbf{E}=E\times F$, which jumps
between these dynamics. Roughly speaking, $X_t$ behaves like
$Z^{(I_t)}_t$ as long as $I$ does not jump. The process $I$ is discrete
and jumps at a rate given by $a$. More precisely, the dynamics of
$(\mathbf{X}_t)_{t\geq0}$ is as follows:
\begin{itemize}
\item Given a starting point $(x,i) \in E \times F$, we take for
$Z^{(i)}$ an instance as above with
initial condition $Z^{(i)}_0 = x$. The initial conditions for $Z^{(j)}$
with $j \neq i$ are irrelevant.
\item The discrete component $I$ is constant and equal to $i$ until the
time $T=\min_{j\in F} T_j$, where $(T_j)_{j\geq0}$ is a family of
random variables that are conditionally independent given $Z^{(i)}$
and that verify
\[
\forall j\in F,\qquad\p ( T_j > t | \mathcal{F}_t ) =
\exp \biggl(- \int_0^t a \bigl(Z^{(i)}_s,i,j
\bigr) \,\mathrm{d}s \biggr),
\]
where $\mathcal{F}_t = \sigma\{ Z^{(i)}_s | s\leq t \}$.
\item For all $t\in[0, T)$, we then set $X_t= Z^{(i)}_t$ and $I_t = i$.
\item At time $T$, there exists a unique $j\in F$ such that $T=T_j$ and
we set $I_{T}=j$ and $X_{T}=X_{T-}$.
\item We take $(X_{T}, I_{T})$ as a new starting point at time $T$.
\end{itemize}
Let us make a few remarks about this construction. First, this
algorithm guarantees the
existence of our process under the condition that there is no explosion
in the switching rate.
In other words, our construction is global as long as $I$ only switches
value finitely many time
in any finite time interval. Assumption~\ref{hyp:a-reg} below will be
sufficient to guarantee this non-explosion. Also note that, in general,
$X$ and $I$ are not Markov
processes by themselves, contrary to $\mathbf{X}$. Nevertheless, we
have that $I$ is a Markov process if
$a$ does not depend on its first component.
The construction given above shows that, provided that there is no explosion,
the infinitesimal generator of $\mathbf{X}$ is given by
%
%
%e1.2 #&#
\begin{equation}
\label{eq:generateur} \mathbf{L} f(x,i) = \mathcal{L}^{(i)} f(x,i) + \sum
_{j \in F} a(x,i,j) \bigl( f(x,j) -f(x,i) \bigr),
\end{equation}
for any bounded function $f$ such that $f(\cdot,i)$ belongs to
$\mathcal{D}^{(i)}$ for every $i \in F$. We will denote by $(\mathbf
{P}_t)_{t\geq0}$ the semigroup of $\mathbf{X}$. To guarantee the
existence of our process, we will consider the following natural assumption:

%
%as1.1 #&#
\begin{hypo}[(Regularity of the jumps rates)]\label{hyp:a-reg}
The following boundedness condition is verified:
\[
\bar{a} = \sup_{x\in E} \sup_{i \in F} \sum
_{j \in F} a(x,i,j) < + \infty,
\]
and the following Lipschitz condition is also verified:
\[
\sup_{i \in F} \sum_{j\in F} \bigl |a(x,i,j)
- a(x,i,j)\bigr | \leq\kappa d(x,y),
\]
for some $\kappa>0$.
\end{hypo}

We will also assume the following hypothesis to guarantee the
recurrence of $I$:
%

%as1.2 #&#
\begin{hypo}[(Recurrence assumption)]\label{hyp:rec}
The matrix $(\underline{a}(i,j))_{i,j\in F}$ defined by
\[
\underline{a}(i,j) = \inf_{x\in E} a(x,i,j),
\]
yields the transition rates of an irreducible and positive recurrent
Markov chain.
\end{hypo}

With these two assumptions, we are able to get exponential stability in
two situations. The first situation
is one where each underlying dynamics does on average yield a
contraction in some Wasserstein distance, but
no regularising assumption is made.
The second situation is the opposite, where we replace the contraction
by a suitable regularising property.

%s1.1 #&#
\subsection{Two criteria without hypoellipticity assumption}

In this section, we assume that we have some information on the
Lipschitz contraction (or expansion)
of our underlying processes:

%
%as1.3 #&#
\begin{hypo}[(Lipschitz contraction)]\label{hypo:Curv}
For each $i \in F$, there exists $\rho(i) \in\R$ such that
%
%
%e1.3 #&#
\begin{equation}
\label{eq:curv} \forall t\geq0,\qquad\mathcal{W}_d \bigl( \mu
P^{(i)}_t, \nu P^{(i)}_t \bigr) \leq
\mathrm{e}^{ - \rho(i) t} \mathcal{W}_d ( \mu, \nu ),
\end{equation}
for any two probability measures $\mu, \nu$. Furthermore there exist
$x_0\in E$ and $t_{x_0}>0$ such that if $V_{x_0}\dvtx x\mapsto
d(x,x_0)$ then
\[
\sup_{t \in[0, t_{x_0}]} P_t V_{x_0}
(x_0) < + \infty.
\]
\end{hypo}

In the previous assumption, given a semigroup $(P_t)_{t\geq0}$, we
used the notation $\mu P_t$ to denote the measure defined by
\[
(\mu P_t) f= \int P_t f \,\mathrm{d}\mu,
\]
if $\mu= \delta_x$, for some $x$, then in this work, we also use the
notation $\delta_x P_t (\mathrm{d}y) = P_t(x, \mathrm{d}y)$.

To verify equation \eqref{eq:curv} is not much of a restriction
because we do not assume that $\rho(i)>0$. The best constant in this
inequality is called the Wasserstein curvature in \cite{J07,J09} and
the coarse Ricci curvature in \cite{O09,O10}, since it is heavily
related to the geometry of the underlying space as illustrated in \cite
{RS05}, Theorem~2. If $\rho(i) > 0$, then we can deduce some
properties like geometric ergodicity, a Poincar\'{e} inequality or some
concentration inequalities \cite{Clo12,J07,J09,HSV,O10}. A trivial
bound on $\rho(i)$ is given in the special case of diffusion processes
in Section~\ref{sect:dif}.

The bound \eref{eq:curv} is quite stringent since, if $\rho(i) > 0$,
it implies that there
is some Wasserstein contraction for \textit{every} $t>0$ and not just
for sufficiently long times.
This is essentially equivalent to the existence of a Markovian coupling
between two instances
$X_t$ and $Y_t$ of the Markov process with generator $\mathcal
{L}^{(i)}$ such that
$\E d(X_t, Y_t) \le\mathrm{e}^{-\rho t} d(X_0, Y_0)$.

In principle, this condition could be slightly relaxed by the addition
of a proportionality
constant $C_i$, provided that one assumes that the
switching rate of the process is sufficiently slow. This ensures
that, most of the time, it spends a sufficiently long time in any one
state for this proportionality
constant not to play a large role.

One could also imagine allowing for jumps of the component in $E$ at
the switching times, and
this would lead to a similar difficulty.

In the same way, the distance $d$ appearing in Assumption~\ref
{hypo:Curv} is the \textit{same}
for every $i$ and that it does not allow for a constant prefactor in
the right-hand side of \eqref{eq:curv}. This may seem like a very
strong assumption since usual convergence theorems, like Harris'
theorem, do not give this kind of bound. We will see however in
Section~\ref{sect:exemple} an example which illustrates that there is no
obvious way in general to
weaken this condition. The intuitive reason why this is so is that if
the process switches
rapidly, then it is crucial to have some local information (small
times) and not only
global information (large times) on the behaviour of each underlying dynamics.\vadjust{\goodbreak}

We have now presented all the assumptions required
to state our main results.
The first one describes the simplest situation, that is when $a$ does
not depend on its first component:

%
%th1.4 #&#
\begin{theo}[(Wasserstein exponential ergodicity in the constant case)]
\label{th:W-cst}
Under Assumptions~\ref{hyp:a-reg}, \ref{hyp:rec} and~\ref
{hypo:Curv}, if $a(x,i,j)$ does not depend on $x$
and the Markov process $I$ has an invariant probability measure $\nu$ verifying
\[
\sum_{i\in F} \nu(i) \rho(i) > 0,
\]
then there exist a probability measure $\boldsymbol{\pi} $, some
constants $C, \lambda>0$ and $q\in(0,1]$ such that
\[
\forall t\geq0,\qquad\mathcal{W}_{\mathbf{d}} (\delta _{\mathbf
{y}_0}
\mathbf{P}_t, \boldsymbol{\pi} ) \leq C \mathrm {e}^{-\lambda t}
\biggl(1+\sum_{i\in F} \int_E
d(y_0,x)^q \boldsymbol{\pi} (\mathrm{d}x,i) \biggr),
\]
for every $\mathbf{y}_0 = (y_0,j_0) \in\mathbf{E}$, where the
distance $\mathbf{d}$, on $\mathbf{E}$, is defined by
%
%
%e1.4 #&#
\begin{equation}
\label{eq:dgras} \mathbf{d}(\mathbf{x},\mathbf{y}) = \one_{i\neq j} +
\one_{i = j} \bigl(1 \wedge d^q(x,y)\bigr),
\end{equation}
for every $\mathbf{x}=(x,i)$, $\mathbf{y}=(y,j)$ belonging to
$\mathbf{E}$.
\end{theo}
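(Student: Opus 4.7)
The strategy combines a coupling argument with a Feynman--Kac estimate for the discrete component, followed by a weak Harris-type fixed-point step to produce the invariant measure.

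\emph{Step 1: Coupling the discrete components.} Since $a(x,i,j)$ does not depend on $x$, the process $I$ is Markovian on its own. Starting from any pair $(i_0, j_0) \in F\times F$, I build a Markovian coupling $(I_t, J_t)$ using the classical coupling of irreducible finite chains, which is absorbed on the diagonal, so that the meeting time $\tau = \inf\{t \ge 0 : I_t = J_t\}$ has exponential tails uniformly in $(i_0, j_0)$ thanks to Assumption~\ref{hyp:rec} and the finiteness of $F$.

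\emph{Step 2: Coupling the continuous components.} I let $(X_t)$ follow the dynamics indexed by $I_t$ and $(Y_t)$ the one indexed by $J_t$. For $t<\tau$ the two discrete components differ, but $\mathbf{d}\le 1$ thanks to the $\1_{i\neq j}$ term, so nothing has to be proved on that time interval. After $\tau$ both continuous processes follow the same switching pattern, and between successive jumps of $I=J$ I use the Markovian coupling that realises the Wasserstein contraction of $P_t^{(i)}$ granted by Assumption~\ref{hypo:Curv}. Conditioning on the trajectory of $I$, this yields
\begin{equation*}
\E\bigl[d(X_t, Y_t) \,\big|\, I, X_\tau, Y_\tau\bigr] \;\le\; d(X_\tau, Y_\tau)\, \exp\Bigl(-\int_\tau^t \rho(I_s)\,ds\Bigr).
\end{equation*}

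\emph{Step 3: Averaging out the non-contracting states.} Because some $\rho(i)$ may be negative, the exponent above is not pathwise negative. I apply Jensen $\E[d^q\mid\cdot]\le(\E[d\mid\cdot])^q$ and take expectation over $I$ to get a factor $\E[\exp(-q\int_\tau^t \rho(I_s)\,ds)]$. This Feynman--Kac expectation on the finite-state chain $I$ is estimated by Perron--Frobenius applied to the $Q$-matrix of $I$ perturbed by the diagonal potential $-q\rho$: it decays like $e^{-\chi(q)(t-\tau)}$ up to constants, with $\chi(q)/q \to \sum_{i\in F} \nu(i)\rho(i) > 0$ as $q\to 0^+$. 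Fixing $q\in(0,1]$ small enough so that $\chi(q)>0$ gives uniform exponential contraction after time $\tau$.

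\emph{Step 4: Control of $d^q(X_\tau, Y_\tau)$ and fixed-point conclusion.} The factor $d^q(X_\tau, Y_\tau)$ is bounded via the triangle inequality together with the moment estimate $\sup_{t\le t_{x_0}} P_t V_{x_0}(x_0) < \infty$ of Assumption~\ref{hypo:Curv}, combined with the exponential tail of $\tau$; this produces a bound of the form $C(1 + d^q(x_0,x) + d^q(x_0,y))$. Assembling Steps 1--3 exhibits $\mathbf{P}_{t_0}$ as a strict contraction on the space of probability measures endowed with $\mathcal{W}_{\mathbf{d}}$, for $t_0$ large enough. The weak form of Harris' theorem alluded to in the abstract (essentially a Banach fixed-point argument in a complete Wasserstein space) then provides a unique invariant measure $\boldsymbol{\pi}$ and the announced exponential decay, the prefactor $(1+\mathcal{W}_{d^q}(\delta_{y_0},\boldsymbol{\pi}))$ absorbing the initial-condition dependence produced at Step 4. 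The main technical obstacle is the Feynman--Kac estimate of Step 3 and, in particular, checking the $q\to 0^+$ asymptotics of $\chi(q)$; once this is in place the rest of the argument reduces to routine coupling bookkeeping.
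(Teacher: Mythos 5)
Your Steps 1--3 are essentially the same strategy the paper uses: coalesce the discrete components via the classical coupling of a finite ergodic chain (exponential tail on the meeting time $T_c$), couple the continuous components Wasserstein-optimally once $I=J$, and pass to a Feynman--Kac/Perron--Frobenius estimate on $\E[\exp(-q\int\rho(I_s)\,ds)]$ to average out the non-contracting states (the paper's Lemma~\ref{lem:Per-Frob}). That is the right core, and you have correctly identified the main technical lemma.

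Step~4 has a genuine gap. First, the coupling estimate you arrive at has the form $\E[\mathbf{d}(\mathbf{X}_t,\mathbf{Y}_t)]\leq Ce^{-\lambda t}(1+V^q(x)+V^q(y))$ with $V(x)=d(x,x_0)$, which is \emph{additive} in the starting points and is not bounded by $\alpha\,\mathbf{d}(\mathbf{x},\mathbf{y})$ for any $\alpha<1$ (take $\mathbf{x},\mathbf{y}$ close but $V(x)$ large). So $\mathbf{P}_{t_0}$ is not a strict contraction on $(\mathcal{P}(\mathbf{E}),\mathcal{W}_{\mathbf{d}})$ and Banach's fixed point theorem does not apply directly; what you actually need is a Cauchy-sequence argument for $(\delta_{\mathbf{y}_0}\mathbf{P}_t)_t$ in $\mathcal{W}_{\mathbf{d}}$, controlled by a genuine Lyapunov function for $\mathbf{X}$. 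Second, your claim that $\E[d^q(X_\tau,Y_\tau)]$ is controlled ``by the moment estimate of Assumption~\ref{hypo:Curv} combined with the exponential tail of $\tau$'' skips the real issue: before $\tau$, the process may sit in non-contracting states ($\rho(i)<0$), so $V(X_t),V(Y_t)$ can grow exponentially, and the moment bound in Assumption~\ref{hypo:Curv} only concerns a single $Z^{(i)}$ near $x_0$. The paper's resolution is to first build a Lyapunov function for the \emph{switched} process $\mathbf{X}$ (Lemmas~\ref{lem:Curv-Lyap} and~\ref{lem:Lyap-cst}, which already rely on Lemma~\ref{lem:Per-Frob}), and then to split $\E[\mathbf{d}(\mathbf{X}_t,\mathbf{Y}_t)]$ on $\{T_c\leq t/2\}$ and $\{T_c\geq t/2\}$ and apply Cauchy--Schwarz, so that the contraction over the interval $[T_c,t]$ of length at least $t/2$ always beats the Lyapunov-controlled moment factor. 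This decoupling via Cauchy--Schwarz, together with the prior construction of the Lyapunov function, is what your outline is missing. Finally, invoking the ``weak form of Harris' theorem'' is misleading here: for the constant-rate case the paper uses a direct coupling argument (following \cite{MLMZ-Quant}); the weak Harris theorem of \cite{HMS} is only used for the non-constant case and requires a $d$-contracting distance and $d$-small sublevel sets, neither of which you have produced.
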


%NB: Attention a la cond initiale, dois je supp ke c tjr vrai ou
%seulement pr un $i$?\\
This statement is not surprising: it states that if the process
contracts in mean, then it converges exponentially to an invariant
distribution. The conditions are rather sharp as will be illustrated in
Section~\ref{sect:exemple}. In particular, we recover \cite
{MLMZ-Quant}, Theorem~1.10, and this (slight) generalisation could be deduced
from the argument given there. Using H\"{o}lder's inequality, we can
also deduce convergence in the $p${th} Wasserstein distance
$\mathcal{W}^{(p)}_{\mathbf{d}}$ with $p\geq1$ provided that
$\mathbf{X}$ satisfies a moment condition.

We provided Theorem~\ref{th:W-cst} and its proof for sake of
completeness and for a better understanding of the more complicated
case, where $a$ is allowed to depend on its first argument. In this
situation, our main result reads as follows.

%
%th1.5 #&#
\begin{theo}[(Wasserstein exponential ergodicity with an on--off type
criterion)]\label{th:W-noncst}
Suppose that Assumptions~\ref{hyp:a-reg}, \ref{hyp:rec}, and~\ref
{hypo:Curv} hold and set
\begin{eqnarray*}
F_0&=& \bigl\{i\in F | \rho(i) > 0 \bigr\} \quad\mbox{and}\quad
F_1 = \bigl\{i\in F | \rho(i) \leq0 \bigr\},
\\
\rho_0 &=& \min_{i\in F_0} \rho(i) > 0 \quad
\mbox{and}\quad\rho _1 = \min_{i\in F_1} \rho(i) \leq0,
\\
a_0 &=& \max_{i\in F_0} \sup_{x\in E}
\sum_{j\in F_1} a(x,i,j) \quad \mbox{and}\quad
a_1=\min_{i\in F_1} \inf_{x\in E} \sum
_{j\in F_0} a(x,i,j).
\end{eqnarray*}
If
\[
\rho_0 a_1 + \rho_1 a_0>0,
\]
then there exist a probability measure $\boldsymbol{\pi} $, some
constants $C, \lambda>0$ and $q\in(0,1]$ such that
\[
\forall t\geq0,\qquad\mathcal{W}_{\mathbf{d}} (\delta _{\mathbf
{y}_0}
\mathbf{P}_t, \boldsymbol{\pi} ) \leq C \mathrm {e}^{-\lambda t}
\biggl(1+\sum_{i\in F} \int_E
d(y_0,x)^q \boldsymbol{\pi} (\mathrm{d}x,i) \biggr),
\]
for every $\mathbf{y}_0 = (y_0,j_0) \in\mathbf{E}$, where the
distance $\mathbf{d}$
on $\mathbf{E}$ is again given by \eref{eq:dgras}.
\end{theo}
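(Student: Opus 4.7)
The plan is to construct an explicit Markovian coupling of two copies $\mathbf{X}=(X,I)$ and $\mathbf{Y}=(Y,J)$ of the process, to exhibit a weighted version of the semi-metric $\mathbf{d}$ that contracts in expectation at an exponential rate, and then to invoke the weak form of Harris' theorem alluded to in the abstract (in the spirit of Hairer--Mattingly) to conclude existence of $\boldsymbol{\pi}$ and the stated convergence.

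\textbf{Step 1 (Coupling).} While $I_t = J_t = i$, run the underlying processes $Z^{(i)}$ under the Markovian coupling realising the Wasserstein bound of Assumption~\ref{hypo:Curv}; when $I_t \neq J_t$, run them independently. For the discrete components, use the usual thinning/Poisson representation: at each state use common Poisson clocks at rate $\min\bigl(a(X_t,I_t,j),a(Y_t,J_t,j)\bigr)$ so that $I$ and $J$ jump synchronously to $j$, together with independent residual clocks whose total rate is bounded by $\kappa\,d(X_t,Y_t)$ thanks to Assumption~\ref{hyp:a-reg}. This is precisely the quantity that controls spurious desynchronization when $I_t=J_t$ but $X_t\neq Y_t$, and it is the main new feature compared to Theorem~\ref{th:W-cst}.

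\textbf{Step 2 (Drift on the diagonal).} Fix $q\in(0,1]$ small and work with $\delta_t = 1\wedge d^q(X_t,Y_t)$. While $I_t=J_t=i$, Assumption~\ref{hypo:Curv} combined with Jensen's inequality (applied to the concave map $u\mapsto 1\wedge u^q$) yields a contraction term of the form $-q\rho(i)\delta_t$, which is negative when $i\in F_1$ and positive when $i\in F_0$. The desynchronization leakage adds a term bounded by $\kappa\,d(X_t,Y_t)$; because of the truncation and the freedom to take $q$ small, this term is dominated by an arbitrarily small multiple of $\delta_t$ plus a constant times $\mathbf{1}_{I_t\neq J_t}$. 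In particular, the drift has the form $-q\rho(I_t)\delta_t + O(\mathbf{1}_{I_t\neq J_t}) + O(q\delta_t)$.

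\textbf{Step 3 (Khasminskii weights and averaging over $I$).} Introduce weights $w:F\to(0,\infty)$ and consider $V(\mathbf{x},\mathbf{y}) = \mathbf{1}_{i\neq j} + \mathbf{1}_{i=j}w(i)\,\delta(x,y)$. The on-off condition $\rho_0 a_1 + \rho_1 a_0 > 0$ is exactly what allows us to choose $w$ constant equal to $w_0$ on $F_0$ and $w_1$ on $F_1$ with $w_1 > w_0 > 0$ so that, averaging the drift above against the jump rates $a(x,i,j)$, one obtains a strictly negative effective rate $-\lambda V$ on the diagonal, up to the off-diagonal boundary term $\mathbf{1}_{i\neq j}$. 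This is a finite-dimensional linear algebra check on $F$ analogous to Khasminskii's classical criterion for switching diffusions; the condition $\rho_0 a_1 + \rho_1 a_0 > 0$ is precisely the necessary positivity of the relevant $2\times 2$ determinant.

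\textbf{Step 4 (Resynchronization).} When $I_t\neq J_t$, Assumption~\ref{hyp:rec} together with the boundedness of $\bar a$ implies that $(I,J)$ meet in a time with geometric tails, uniformly in $(X,Y)$. This provides the exponential control of the boundary term $\mathbf{1}_{i\neq j}$ needed to close the drift inequality for $V$.

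\textbf{Step 5 (Weak Harris).} Combining Steps~3--4 yields $\mathbf{E}\,V(\mathbf{X}_t,\mathbf{Y}_t) \le Ce^{-\lambda t}V(\mathbf{X}_0,\mathbf{Y}_0)$, and the auxiliary bound $\sup_{t\le t_{x_0}} P_tV_{x_0}(x_0) < \infty$ in Assumption~\ref{hypo:Curv} furnishes a Lyapunov function controlling excursions far from $x_0$. Feeding these ingredients into the weak Harris theorem provides existence and uniqueness of an invariant measure $\boldsymbol{\pi}$ and the quantitative convergence bound, with the factor $1+\mathcal{W}_{d^q}(\delta_{y_0},\boldsymbol{\pi})$ coming from the standard reduction from Wasserstein to weighted total-variation-type distances.

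\textbf{Main obstacle.} The delicate point is Step~2: the $x$-dependence of $a$ prevents a synchronous coupling of $I$ and $J$ even when $X_0=Y_0$, and the desynchronization rate $\kappa d(X_t,Y_t)$ scales linearly in $d$ while the contraction gives only $e^{-\rho(i)t}$ in Wasserstein. The truncation $1\wedge d^q$ with $q\in(0,1]$ in the definition of $\mathbf{d}$ and the freedom to shrink $q$ are what allow the leakage term to be absorbed into the contractive drift; choosing $q$ small enough so that $\kappa\,d^{1-q}$ contributes only a fraction of $q\rho_0$ on the set $\{d\le 1\}$ is the key technical balancing.
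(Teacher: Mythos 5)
Your overall plan (build a coupling, produce a weighted contracting semi-metric, invoke a weak Harris theorem) is the same as the paper's, which proves the more general Theorem~\ref{th:W-noncst2} and deduces this one as a special case. The "Khasminskii weights" in your Step~3 play the same role as the Perron--Frobenius eigenfunction of the auxiliary birth--death chain used by the paper's Lemma~\ref{lem:Per-Frob}. However, two steps as written would fail.

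The main gap is the absorption of the desynchronization leakage in Step~2. On the diagonal $I_t=J_t$, the rate at which the discrete components decouple is of order $\kappa\,d(X_t,Y_t)$, and a single decoupling event costs approximately $1 - \delta_t \approx 1$ in the semi-metric $\delta_t = 1\wedge d^q$, so the drift contribution is of order $\kappa\,d$. You claim that by shrinking $q$ this becomes an arbitrarily small multiple of $\delta_t = d^q$ on $\{d\le 1\}$; but the ratio $\kappa d/\delta_t = \kappa\,d^{1-q}$ is not small uniformly on $\{d\le 1\}$ (it approaches $\kappa$ as $d\uparrow 1$), and moreover the contraction term $q\rho(i)\delta_t$ that is supposed to dominate it vanishes as $q\to 0$. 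Shrinking $q$ therefore hurts rather than helps. The paper circumvents this by inserting an extra scaling parameter $\delta$ in the contracting distance, $\widetilde d = \mathbf{1}_{i\neq j} + \mathbf{1}_{i=j}\bigl(\delta^{-1}d^q\wedge 1\bigr)$, so that $\widetilde d < 1$ forces $d < \delta^{1/q}$; the total probability of ever desynchronizing is then shown (Lemma~\ref{lem:d-contract}) to be bounded by a constant times $d^q(x,y) = \delta\,\widetilde d(\mathbf x,\mathbf y)$, which is a small fraction of $\widetilde d$ once $\delta$ is small. Without this ``zoom-in'' parameter the contraction estimate cannot be closed.

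The second gap is the construction of the Lyapunov function for $\mathbf X$ and the averaging over the switching in Steps~3 and~5. Since $a$ depends on $x$, the component $I$ alone is not a Markov process, so the exponential bound on $\E\bigl[e^{-q\int_0^t\rho(I_s)\,ds}\bigr]$ cannot be obtained directly by a Perron--Frobenius/Khasminskii argument applied to $I$. The paper resolves this by coupling $(X,I)$ with an auxiliary two-state (birth--death) chain $L$ independent of $X$ and satisfying $\rho(I_t)\ge\alpha(L_t)$ almost surely, and applying Lemma~\ref{lem:Per-Frob} to $L$ (see Lemma~\ref{lem:Lyap-noncst}). Your proposal never addresses this non-Markovianity, and your implicit infinitesimal drift computation on $w(i)\,d(x,x_0)^q$ also has the problem, noted explicitly in the paper's Remark after Lemma~\ref{lem:Curv-Lyap}, that $V_{x_0}$ is generally not in the domain of the generator; one must instead argue along the jump times via the thinned Poisson representation as the paper does. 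Finally, Step~4's attempt to close a full drift inequality over $\{I\neq J\}$ is more than the weak Harris theorem requires and is delicate, because the $X$-components can expand while $I\neq J$; the paper instead proves contraction only for $\widetilde d < 1$ (Lemma~\ref{lem:d-contract}) and handles pairs with $I\neq J$ through the separate $\widetilde d$-smallness of bounded sets (Lemma~\ref{lem:d-small}).
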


With this result, we not only recover \cite{MLMZ-Quant}, Theorem~1.15,
but we extend it significantly. In our case, the underlying dynamics are
not necessarily deterministic and do not need to be strictly
contracting in
a Wasserstein distance. One drawback is that the constants $\lambda$
and $C$ are
much less explicit. This theorem is a direct consequence of the more general
Theorem~\ref{th:W-noncst2} below. These two theorems are our main results
and, contrary to Theorem~\ref{th:W-cst}, it seems that they cannot be deduced
directly from the approach of \cite{MLMZ-Quant}.

%s1.2 #&#
\subsection{Two criteria with hypoellipticity assumption}

In the previous subsection, we have supposed that some of the
underlying dynamics contract
at sufficiently high rate in a Wasserstein distance. This is of course
not a necessary condition
for geometric ergodicity in general. Using some arguments of the proof
of Theorem~\ref{th:W-cst} and
Theorem~\ref{th:W-noncst}, we can deduce a different criterion which
uses instead a Lyapunov-type argument to prove that $\mathbf{X}$
converges. We begin by stating an assumption similar to Assumption~\ref
{hypo:Curv}:
%

%as1.6 #&#
\begin{hypo}[(Existence of a Lyapunov function)]\label{hyp:Lyap}
There exist $K\geq0$, a function $V \geq0$, and for every $i\in F$
there exists $\lambda(i)\in\R$ such that
%
%
%e1.5 #&#
\begin{equation}
\label{eq:Lyap-underlying} \forall t\geq0,\forall x\in E,\qquad P^{(i)}_t
V(x) \leq\mathrm {e}^{-\lambda(i) t} V(x) + K.
\end{equation}
\end{hypo}

Note again that we have not supposed that $\lambda(i)>0$. One way to
prove this kind of bound is to use the classical drift condition on the
generator (see \eqref{eq:CD} below). With this assumption we are able
to prove the following result, where
the definition of a ``small set'' will be recalled in Definition~\ref
{def:small}
below.

%
%th1.7 #&#
\begin{theo}[(Exponential ergodicity in the constant case)]
\label{th:dtv-cst}
Suppose that Assumptions~\ref{hyp:a-reg}, \ref{hyp:rec} and~\ref
{hyp:Lyap} hold, that $a(x,i,j)$ does not depend on $x$
and that $I$ has an invariant probability measure $\nu$ verifying
\[
\sum_{i\in F} \nu(i) \lambda(i) > 0.
\]
If there exists $i_0\in F$ and $t_0\geq0$ such that the sublevel sets
$\{x \in E | V(x) \leq K \}$ of $V$ are small for $P^{(i_0)}_t$ for
every $K>0$ and $t\geq t_0$,
then there exist a probability measure $\boldsymbol{\pi} $ and two
constants $C, \lambda>0$ such that
\[
\forall t\geq0,\qquad d_\TV (\delta_{\mathbf{x}}
\mathbf{P}_t, \boldsymbol{\pi} ) \leq C \mathrm{e}^{-\lambda t}
\bigl(1+V(x)\bigr),
\]
for every $\mathbf{x}=(x,i) \in\mathbf{E}$.
\end{theo}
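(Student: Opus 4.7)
The plan is to apply the geometric form of Harris' theorem to $\mathbf X$, which requires verifying (i) a Foster--Lyapunov drift for some skeleton $\mathbf P_{t_1}$, and (ii) a small-set (minorisation) condition on sublevel sets of the Lyapunov function. Combined with the irreducibility that follows from Assumption~\ref{hyp:rec}, these two ingredients yield exponential convergence in a weighted total variation norm, which is essentially the claimed bound.

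The main difficulty is that $V$ itself is in general not a Lyapunov function for $\mathbf P_t$: the expected multiplier $\E_i[\exp(-\int_0^t\lambda(I_s)\,ds)]$ may grow exponentially in $t$ even when $\sum_i\nu(i)\lambda(i)>0$ (fast switching competing with individual expansion rates). The remedy is to work with $V^q$ for a sufficiently small exponent $q\in(0,1)$. By Jensen's inequality and the subadditivity $(a+b)^q\le a^q+b^q$, Assumption~\ref{hyp:Lyap} yields
$$
P^{(i)}_t V^q(x)\le e^{-q\lambda(i)t}V^q(x)+K^q.
$$
Since $a(x,i,j)=a(i,j)$, the chain $I$ is autonomous on the finite set $F$. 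Conditioning on its trajectory and iterating this inequality on the successive excursions of $I$ between jumps (whose number in $[0,t]$ is dominated by a Poisson variable of parameter $\bar a t$) yields
$$
\mathbf P_t V^q(x,i)\le u_t^{(q)}(i)\,V^q(x) + R_t,\qquad u_t^{(q)}(i):=\E_i\!\left[\exp\!\left(-q\int_0^t\lambda(I_s)\,ds\right)\right],
$$
with $R_t<\infty$ for every $t\ge 0$. Here $u_t^{(q)}$ is the Feynman--Kac semigroup associated with the matrix $Q-q\,\mathrm{diag}(\lambda)$ applied to $\1$. For $q=0$ the Perron eigenvalue of $Q$ is $0$, with right eigenvector $\1$ and left eigenvector $\nu$, so first-order perturbation gives
$$
\partial_q\mu_{\max}\bigl(Q-q\,\mathrm{diag}(\lambda)\bigr)\big|_{q=0}=-\sum_i\nu(i)\lambda(i)<0.
$$
Hence for $q>0$ small enough the Perron eigenvalue is strictly negative, $u_t^{(q)}(i)\le C_q e^{-c_q t}$ for some $c_q>0$, and choosing $t_1$ large produces the drift inequality $\mathbf P_{t_1}V^q\le \alpha V^q+\beta$ with $\alpha\in(0,1)$.

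For the small-set condition, let $R'$, $t_0$, $\varepsilon>0$ and a probability measure $\eta$ on $E$ be such that $P^{(i_0)}_{t_0}(x,\cdot)\ge\varepsilon\,\eta$ for all $x\in\{V\le R'\}$. Starting from $(x,j)\in\{V^q\le R\}\times F$, Markov's inequality applied to the bound above, together with enlarging $R'$ if needed, gives $\p(V(X_s)\le R')\ge 1/2$ for a fixed $s>0$, uniformly in $(x,j)$. Independently, since $I$ is an autonomous irreducible finite chain with bounded rates, $\p_j(I_u=i_0\ \forall u\in[s,s+t_0])\ge\delta>0$ uniformly in $j$. On the intersection of these two events the conditional law of $X_{s+t_0}$ dominates $\varepsilon\,\eta$ while $I_{s+t_0}=i_0$, hence
$$
\mathbf P_{s+t_0}((x,j),\,\cdot\,)\ge \tfrac12\,\delta\,\varepsilon\,\eta\otimes\delta_{i_0}\qquad\forall(x,j)\in\{V^q\le R\}\times F.
$$
Combining the drift and this minorisation via the standard geometric Harris theorem for the skeleton $\mathbf X_{nT}$, with $T$ a common multiple of $t_1$ and $s+t_0$, yields an invariant probability $\boldsymbol\pi$ and the bound $d_\TV(\delta_{\mathbf x}\mathbf P_{nT},\boldsymbol\pi)\le C e^{-cn}(1+V(x)^q)$; the uniform estimate $\mathbf P_u V^q\le C'V^q+C''$ for $u\in[0,T]$ (which holds since $e^{-q\lambda(i)u}$ is bounded on $[0,T]$) upgrades this to continuous time, and the trivial inequality $V^q\le 1+V$ for $V\ge 0$, $q\in(0,1]$, gives the claimed estimate.

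I expect the main obstacle to be the conversion of the averaged positivity $\sum_i\nu(i)\lambda(i)>0$ into a genuine pointwise drift for $\mathbf P$. The snow-flake trick $V\leadsto V^q$ is essential here: without it, the Perron eigenvalue of $Q-\mathrm{diag}(\lambda)$ may be positive even when $\bar\lambda>0$, so that $V$ is not a Lyapunov function for $\mathbf X$. Once the spectral perturbation at $q=0^+$ has been used to produce a small-$q$ regime in which the Perron eigenvalue of $Q-q\,\mathrm{diag}(\lambda)$ is negative, the construction of the small set (combining Markov's inequality with the excursion of $I$ through $i_0$) and the application of Harris' theorem are essentially routine.
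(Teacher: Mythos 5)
Your proposal follows essentially the same route as the paper: reduce to $V^q$ via Jensen, control the Feynman--Kac multiplier $\E_i[e^{-q\int_0^t\lambda(I_s)\,ds}]$ by a Perron--Frobenius perturbation argument (which is exactly the content of the paper's Lemma~\ref{lem:Per-Frob} delegated to \cite{BGM}), and then combine Markov's inequality with the irreducibility of the autonomous chain $I$ and the smallness of sublevel sets for $P^{(i_0)}$ to obtain a small set for $\mathbf{P}$, after which Harris' theorem closes the argument. The only cosmetic difference is that you phrase the small-set step as a Doeblin minorisation $P^{(i_0)}_{t_0}\ge\varepsilon\eta$ rather than the paper's two-point coupling bound of Definition~\ref{def:small}, and one should note that the intersection bound $\p(V(X_s)\le R',\ I_u=i_0\ \forall u\in[s,s+t_0])\ge\text{const}>0$ requires taking $R'$ large enough that $\p(V(X_s)>R')$ is smaller than $\p_j(I_u=i_0\ \forall u\in[s,s+t_0])$, since the two events are not independent (they share the path of $I$ on $[0,s]$) -- your remark about ``enlarging $R'$ if needed'' covers this, and the same point is implicit in the paper's proof.
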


We give also the result analogous to Theorem~\ref{th:W-noncst}.

%
%th1.8 #&#
\begin{theo}[(Exponential ergodicity with an on--off type criterion)]
\label{th:dtv-noncst}
Suppose that Assumptions~\ref{hyp:a-reg}, \ref{hyp:rec}, \ref
{hypo:Curv} hold and set
\begin{eqnarray*}
F_0&=& \bigl\{i\in F | \lambda(i) > 0 \bigr\} \quad\mbox{and}\quad
F_1 = \bigl\{ i\in F | \lambda(i) \leq0 \bigr\},
\\
\lambda_0 &=& \min_{i\in F_0} \lambda(i) > 0 \quad
\mbox{and}\quad \lambda_1 = \min_{i\in F_1} \lambda(i)
\leq0,
\\
a_0 &=& \max_{i\in F_0} \sup_{x\in E}
\sum_{j\in F_1} a(x,i,j) \quad \mbox{and}\quad
a_1=\min_{i\in F_1} \inf_{x\in E} \sum
_{j\in F_0} a(x,i,j).
\end{eqnarray*}
If
\[
\lambda_0 a_1 + \lambda_1
a_0>0,
\]
and there exists $i_0\in F$ and $t_0 \geq0$ such that all sublevel sets
of $V$ are small for $P^{(i_0)}_t$, for every $t\geq t_0$, then there
exist a probability measure $\boldsymbol{\pi} $ and two constants $C,
\lambda>0$ such that
\[
\forall t \geq0,\qquad d_\TV (\delta_{\mathbf{x}}
\mathbf{P}_t, \boldsymbol{\pi} ) \leq C \mathrm{e}^{-\lambda t}
\bigl(1+V(x)\bigr),
\]
for every $\mathbf{x}=(x,i) \in\mathbf{E}$.
\end{theo}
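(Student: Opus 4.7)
The plan is to apply Harris' theorem to the joint semigroup $\mathbf{P}_t$; this requires a Foster--Lyapunov function $\mathbf{V}$ on $\mathbf{E}$ and a small set containing its sublevel sets, both of which we construct by leveraging the on-off structure together with the smallness hypothesis on $i_0$.

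For the Lyapunov function, I would try the ansatz $\mathbf{V}(x,i) = \theta(i) V(x)^p$, with $p \in (0,1]$ small and positive weights $\theta(i)$ to be determined; the power $p$ is the key gadget that lets us exploit the on-off condition in its full strength (otherwise one recovers only the stronger condition $\lambda_0 \lambda_1 + \lambda_0 a_1 + \lambda_1 a_0 > 0$). Jensen's inequality and the subadditivity of $x \mapsto x^p$ applied to Assumption~\ref{hyp:Lyap} yield $P^{(i)}_t V^p \leq e^{-p\lambda(i) t} V^p + K_1$, which at the generator level gives $\mathcal{L}^{(i)} V^p \leq -p\lambda(i) V^p + K_2$, so substituting into \eqref{eq:generateur}
\begin{equation*}
\mathbf{L}\mathbf{V}(x,i) \leq \Bigl(-p\theta(i)\lambda(i) + \sum_{j\in F} a(x,i,j)(\theta(j) - \theta(i))\Bigr) V(x)^p + \theta(i) K_2.
\end{equation*}
Seeking $\theta$ constant on each of $F_0$ and $F_1$ with $\theta|_{F_1} > \theta|_{F_0}$, the worst-case bracket is bounded above by the $\theta$-action of the $2\times 2$ Metzler matrix with rows $(-p\lambda_0 - a_0,\, a_0)$ and $(a_1,\, -p\lambda_1 - a_1)$. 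Its determinant equals $p^2 \lambda_0 \lambda_1 + p(\lambda_0 a_1 + \lambda_1 a_0)$, which is strictly positive for all sufficiently small $p > 0$ precisely because $\lambda_0 a_1 + \lambda_1 a_0 > 0$, while the trace is strictly negative for such $p$. The matrix therefore has two negative eigenvalues, and its strictly positive Perron eigenvector supplies the weights $\theta$, yielding $\mathbf{L}\mathbf{V} \leq -\gamma \mathbf{V} + K_3$ for some $\gamma > 0$.

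For the small set, I would show that the sublevel sets $\{\mathbf{V} \leq R\} \subseteq \bigcup_{i\in F}\{i\} \times \{V \leq R_i\}$ are small for $\mathbf{P}_T$ with $T$ sufficiently large. Let $B = \{V \leq R_0\}$ be a sublevel set on which the smallness hypothesis for $i_0$ supplies $P^{(i_0)}_{s_0}(x,\cdot) \geq \alpha\,\eta(\cdot)$ for some probability measure $\eta$ on $E$. The strategy is to route $\mathbf{X}$, from any starting point $(x,i)$ in the given sublevel set, into the event $\{I_T = i_0,\, X_T \in B\}$ with probability bounded below uniformly in $(x,i)$. Concretely, using the Poisson thinning $a = \underline{a} + (a - \underline{a})$: with probability at least $e^{-\bar a\, t_1}$ no ``excess'' jump of $I$ occurs on $[0,t_1]$, so $I$ follows the minorising chain of Assumption~\ref{hyp:rec} and reaches $i_0$ before time $t_1$ with probability at least $\beta > 0$; meanwhile, the Foster--Lyapunov bound from the first step together with a Markov inequality keep $X$ inside a larger sublevel set $\{V \leq R_1\}$ throughout $[0, t_1]$ with probability close to one. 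Conditionally on this event and on the further event that $I$ remains at $i_0$ during $[t_1, t_1 + s_0]$ (probability at least $e^{-\bar a s_0}$), the $X$-component evolves as $Z^{(i_0)}$, and the smallness assumption furnishes the desired minorisation $\mathbf{P}_T((x,i), \cdot) \geq \alpha'\, (\eta \otimes \delta_{i_0})(\cdot)$ with $T = t_1 + s_0$.

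The main obstacle is precisely this second step: one must synchronise the recurrence of $I$ (a property of the frozen chain with rates $\underline{a}$) with uniform control of the $V$-growth of $X$ during excursions through $F_1$, where the underlying dynamics may only be expansive. The Foster--Lyapunov estimate from the first step is exactly what prevents $V(X)$ from escaping to infinity during these excursions, and hence what allows the minorisation to be performed uniformly over arbitrarily large sublevel sets of $\mathbf{V}$. With both ingredients in hand, the standard $V$-weighted version of Harris' theorem yields a unique invariant probability measure $\boldsymbol{\pi}$ together with exponential convergence of $\mathbf{P}_t(\mathbf{x}, \cdot)$ to $\boldsymbol{\pi}$ in total variation, with prefactor proportional to $1 + \mathbf{V}(\mathbf{x})$; since $p \leq 1$, this is dominated by a constant multiple of $1 + V(x)$, giving the claimed bound.
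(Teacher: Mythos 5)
Your overall architecture (Lyapunov function for $\mathbf{P}_t$, small set through the $i_0$-phase, then Harris) matches the paper's, but your construction of the Lyapunov function takes a genuinely different and more explicit route. The paper proves the drift bound at the \emph{semigroup} level (Lemma~\ref{lem:Lyap-noncst}): it couples $\mathbf{X}$ to an auxiliary birth--death chain $L$ that stochastically dominates $n_{I_t}$, iterates the underlying bounds \eqref{eq:Lyap-underlying} along the Poisson skeleton of $I$, and then applies Lemma~\ref{lem:Per-Frob} (Perron--Frobenius for the auxiliary chain) to produce $q$ and $\eta$; the resulting Lyapunov function is simply $V^q$, unweighted. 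Your ansatz $\mathbf{V}(x,i)=\theta(i)V(x)^p$ with weights determined by the Perron eigenvector of the $2\times2$ Metzler matrix is cleaner and makes the role of the on-off inequality $\lambda_0 a_1+\lambda_1 a_0>0$ transparent (it is, up to the factor $p$, the determinant condition); it also generalizes naturally to the birth--death setting of Theorem~\ref{th:dtv-noncst2} by taking $\theta$ to be the Perron eigenvector of the corresponding tridiagonal matrix. This is a legitimately different and arguably more illuminating organization of the Perron--Frobenius input.

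However, your derivation has one genuine gap that the paper is at pains to avoid: the passage from the semigroup bound $P^{(i)}_tV^p\le e^{-p\lambda(i)t}V^p+K_1$ to the generator bound $\mathcal{L}^{(i)}V^p\le -p\lambda(i)V^p+K_2$ is \emph{not} automatic, since nothing guarantees that $V^p$ lies in the domain of $\mathcal{L}^{(i)}$. The paper flags exactly this: Assumption~\ref{hyp:Lyap} is deliberately formulated at the semigroup level, and the second remark after Lemma~\ref{lem:Curv-Lyap} points out that $V_{x_0}$ fails to be in the domain already for Brownian motion. To repair your argument you would have to establish $\mathbf{P}_t\mathbf{V}\le e^{-\gamma t}\mathbf{V}+K'$ directly, for instance by iterating along the Poisson skeleton $(\tau_n)$ as in Lemma~\ref{lem:Lyap-noncst} and using the weights $\theta(I_{\tau_n})$ in place of the auxiliary chain $L$; this is doable but is precisely the technical content you have skipped. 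The small-set step is sound in outline (it is the non-constant-rate adaptation of the proof of Theorem~\ref{th:dtv-cst}), although the sentence about keeping $V(X_s)$ bounded ``throughout $[0,t_1]$'' is more than you need --- controlling $V(X_{t_1})$ alone via Markov's inequality and the Lyapunov bound suffices, which spares you a maximal inequality --- and the claim that conditioning on the minorising chain leaves the law of $X$ untouched needs the simple but essential observation that $\p(\text{no excess jump on }[t_1,t_1+s_0]\mid (X_s))\ge e^{-\bar a s_0}$ pathwise, so that the minorisation survives the tilting.
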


Note that in general it is not necessary to assume that sublevel sets
of $V$ are small for
any single one of the underlying dynamics. For example, using the
results of \cite{BH12,MLMZ-Horm},
Section~\ref{sect:PDMP} gives results analogous to the two previous
theorems, in the
special case of piecewise deterministic Markov processes where the only
small sets for the
underlying dynamics consist of single points.

The remainder of the paper is organised as follows. The proofs of our
four main theorems are split over two sections: Section~\ref{sect:cst}
deals with the proof of Theorem~\ref{th:W-cst} and Theorem~\ref
{th:dtv-cst}. In Section~\ref{sect:non-cst}, we begin by giving a more
general assumption in the non-constant case than our on--off criterion.
Then, we introduce a weak form of Harris' theorem that we will use to
prove Theorem~\ref{th:W-noncst}. The proof of this theorem is then
decomposed in such a way to verify each point of the weak Harris'
theorem. Section~\ref{sect:dif} gives sufficient conditions to verify
our main assumption in the special case of diffusion processes. The
section which follows deals with the special case of switching
dynamical system. We conclude with Section~\ref{sect:exemple}, where
we give some very simple examples illustrating the sharpness of our conditions.

%s2 #&#
\section{Constant jump rates}
\label{sect:cst}

In this section, we begin by proving that under Assumptions~\ref
{hypo:Curv} or~\ref{hyp:Lyap}, the process $\mathbf{X}$ cannot wander
off to infinity, that is, its semigroup possesses a Lyapunov function.
We then prove Theorems~\ref{th:W-cst} and~\ref{th:dtv-cst} using a
similar argument to \cite{MLMZ-Quant} for the first one and Harris'
theorem for the second one.

%s2.1 #&#
\subsection{Construction of a Lyapunov function}

We begin by recalling the definition of a Lyapunov function
%

%de2.1 #&#
\begin{defi}[(Lyapunov function)]
\label{def:lyap}
A Lyapunov function for a Markov semigroup $(P_t)_{t\geq0}$ over a
Polish space $(X,d_X)$ is a function $V\dvtx X\mapsto[0,\infty]$ such
that $V$ is integrable with respect to $P_t (x, \cdot)$ for every
$x\in X$ and $t> 0$ and such that there exist constants $C_V, \gamma,
K_V>0$ verifying
%
%
%e2.1 #&#
\begin{equation}
\label{eq:def-Lyap} P_t V(x) = \int_X V(y)
P_t(x,\mathrm{d}y) \leq C_V \mathrm {e}^{-\gamma t}
V(x) + K_V,
\end{equation}
for every $x \in X$ and $t \geq0$.
\end{defi}

A well-known sufficient condition for finding a Lyapunov function is
the following drift condition:
%
%
%e2.2 #&#
\begin{equation}
\label{eq:CD} \mathcal{L} V \leq-\gamma V + C,
\end{equation}
where $\mathcal{L}$ is the generator of the semigroup $(P_t)_{t\geq
0}$. The condition \eqref{eq:CD} implies a bound like \eqref
{eq:Lyap-underlying} and is clearly stronger than \eqref{eq:def-Lyap}.
In general, our switching Markov process $\mathbf{X}$ may not verify
the drift condition \eqref{eq:CD} but,
in Lemmas~\ref{lem:Lyap-cst} and~\ref{lem:Lyap-noncst}, we give a
sharp condition under which it verifies \eqref{eq:def-Lyap}. In this
section, we first prove that a Wasserstein contraction as in Assumption~\ref{hypo:Curv} implies the existence of a Lyapunov-type function as
in Assumption~\ref{hyp:Lyap}. Then, we will prove that Assumption~\ref
{hyp:Lyap} implies the existence of a Lyapunov function for $\mathbf{X}$.

%
%le2.2 #&#
\begin{lem}[(Wasserstein contraction implies the existence of a
Lyapunov-type function)]
\label{lem:Curv-Lyap}
Let $(P_t)_{t\geq0}$ be the semigroup of a Markov process, on a Polish
space $(X,d_X)$, such that there exists $\lambda\in\R^*$ verifying
%
%
%e2.3 #&#
\begin{equation}
\label{eq:contraction-lyap} \mathcal{W}_{d_X} (\delta_x
P_t, \delta_y P_t) \leq\mathrm
{e}^{-\lambda t} d_X (x,y),
\end{equation}
for every $x,y\in X$ and $t\geq0$. If there exist $x_0\in X$ and
$t_{x_0}>0$ such that the function $V_{x_0}\dvtx x\mapsto d(x,x_0)$ verifies
%
%
%e2.4 #&#
\begin{equation}
\label{e:boundV} \sup_{t \in[0, t_{x_0}]} P_t V_{x_0}
(x_0) < + \infty,
\end{equation}
then there exist $C_1,C_2>0$ such that
%
%
%e2.5 #&#
\begin{equation}
\label{eq:lyap-pondere} P_t V_{x_0}(x) \leq\mathrm{e}^{- \lambda t }
\bigl(V_{x_0}(x) + C_1\bigr) + C_2,
\end{equation}
for every $x\in X$ and $t\geq0$.
\end{lem}

\begin{pf}
Note first that the bound \eqref{eq:contraction-lyap} is equivalent to
the bound
%
%
%e2.6 #&#
\begin{equation}
\label{eq:contraction-lyap2} \mathcal{W}_{d_X} (\mu P_t, \nu
P_t) \leq \mathrm{e}^{-\lambda t} \mathcal {W}_{d_X} (\mu, \nu),
\end{equation}
for \textit{every} probability measure $\mu$ and $\nu$, as a
consequence of the
bound $\mathcal{W}_{d_X} (\mu P_t, \nu P_t) \leq\int\mathcal
{W}_{d_X} (\delta_x P_t, \delta_y P_t) \pi(\mathrm{d}x,\mathrm
{d}y)$ which follows
immediately from the definitions
and is true for any measure $\pi$ with marginals $\mu$ and $\nu$.

For any $t\geq t_{x_0}$ and $n\geq0$, it then follows from \eref
{eq:contraction-lyap2} that
\begin{eqnarray*}
P_t V_{x_0} (x_0) &=&\mathcal{W}_{d_X}
(\delta_{x_0} P_t, \delta_{x_0}) \leq\sum
_{k=0}^{n-1} \mathcal{W}_{d_X} (
\delta_{x_0} P_{(k+1){t}/{n}}, \delta_{x_0} P_{k{t}/{n}}
)
\\
&\le&\sum_{k=0}^{n-1} \mathrm{e}^{-\lambda k t/n}
\mathcal{W}_{d_X} (\delta _{x_0}P_{{t}/{n}},
\delta_{x_0})\leq\frac{\mathrm{e}^{-\lambda t}
-1}{\mathrm{e}^{-\lambda t/n} -1} P_{t/n} V_{x_0}
(x_0).
\end{eqnarray*}
Taking $n= \lfloor t/t_{x_0} \rfloor+1$, where $\lfloor\lambda
\rfloor$ denotes the
integer part of $\lambda$,
we conclude that
\[
P_t V_{x_0} (x_0) \leq\bigl(
\mathrm{e}^{-\lambda t} + 1\bigr) C',\qquad C'= \sup
_{u \in [t_{x_0}/2, t_{x_0} ]} \frac{P_{u}
V_{x_0} (x_0)}{|\mathrm{e}^{-\lambda u} -1|},
\]
which is finite by \eref{e:boundV}. Finally, for every $x\in X$ and
$t\geq0$, we have
\begin{eqnarray*}
P_t V_{x_0}(x) &=& \mathcal{W}_{d_X} (
\delta_{x} P_{t}, \delta_{x_0}) \leq
\mathcal{W}_{d_X}(\delta_{x} P_{t},
\delta_{x_0} P_t) + \mathcal{W}_{d_X}(
\delta_{x_0} P_{t}, \delta_{x_0})
\\
&\leq& \mathrm{e}^{-\lambda t} V_{x_0}(x) + \bigl(
\mathrm{e}^{-\lambda t} + 1\bigr)C',
\end{eqnarray*}
thus concluding the proof.
\end{pf}

%
%re2.3 #&#
\begin{Rq}
The point of this lemma is to also allow for negative values of
$\lambda$. When $\lambda> 0$, then
it is immediate that $P_t$ admits a unique invariant measure and
exhibits geometric ergodicity.
\end{Rq}

%
%re2.4 #&#
\begin{Rq}
If $V_{x_0}$ is in the domain of the generator $\mathcal{L}$ of
$(P_t)_{t\geq0}$, then we have
\[
\forall t\geq0,\qquad P_t V_{x_0} (x_0) \leq
\frac{\mathrm{e}^{-\lambda t} -1}{\mathrm{e}^{-\lambda t/n} -1} P_{t/n} V_{x_0} (x_0),
\]
for some $n\geq1$. Now, taking the limit $n\rightarrow+ \infty$, we
deduce the following bound:
\[
\mathcal{W}_{d_X} (\delta_{x_0} P_{t},
\delta_{x_0}) \leq\frac
{\mathrm{e}^{-\lambda t} -1}{-\lambda} \mathcal{L} V(x_0).
\]
Finally, for every $x\in X$, we have
\begin{eqnarray*}
P_t V(x) &=& \mathcal{W}_{d_X} (\delta_{x}
P_{t}, \delta_{x_0}) \leq\mathcal{W}_{d_X}(
\delta_{x} P_{t}, \delta_{x_0} P_t) +
\mathcal{W}_{d_X}(\delta_{x_0} P_{t},
\delta_{x_0})
\\
&\leq& \mathrm{e}^{-\lambda t} V(x) + \frac{\mathrm{e}^{-\lambda t}
-1}{-\lambda} \mathcal{L}
V(x_0).
\end{eqnarray*}
However, $V_{x_0}$ does not belong to the domain of the generator in
general, as can be seen
already in the example of simple Brownian motion.
\end{Rq}

%
%re2.5 #&#
\begin{Rq}[(The special case $\lambda= 0$)]
The assumption $\lambda\neq0$ is required for our
conclusion to hold. Indeed, if $(B_t)_{t\geq0}$ is a Brownian motion then
\[
\lim_{t \rightarrow+ \infty} \E \bigl[ | B_t | \bigr] = + \infty,
\]
and inequality \eqref{eq:lyap-pondere} does not hold. Instead, it is
straightforward to follow the argument of the proof to show that if
$\lambda= 0$ in
Lemma~\ref{lem:Curv-Lyap},
then one has the bound
\[
P_t V_{x_0}(x) \leq V_{x_0}(x) + Ct,
\]
for some fixed constant $C>0$, every $x\in E$, and every $t\geq0$.
\end{Rq}

%
%re2.6 #&#
\begin{Rq}
By Lemma~\ref{lem:Curv-Lyap}, Assumption~\ref{hypo:Curv} implies
Assumption~\ref{hyp:Lyap} with $\lambda= \rho$
and $V=V_{x_0}$ as long as one has $\rho(i) \neq0$
for every $i$.
In general, without any assumption on $\rho$, it does of course imply
Assumption~\ref{hyp:Lyap} for any function $\lambda$ with
$\lambda(i) < \rho(i)$, which is sufficient for our needs.
\end{Rq}

We now show that if Assumption~\ref{hyp:Lyap} holds and the mean of
$(\lambda(i))_{i\in F}$ is positive, then $\mathbf{X}$ admits a
Lyapunov function. As in \cite{MLMZ-Quant}, this result is obtained as
a consequence of the following lemma:

%
%le2.7 #&#
\begin{lem}
\label{lem:Per-Frob}
Let $(K_t)_{t \ge0}$ be a continuous-time Markov chain on a finite set
$S$, and assume that it is irreducible and positive recurrent with
invariant measure $\nu_K$. If $\alpha\dvtx S \to\R$ is a function verifying
\[
\sum_{n\in S} \nu_K(n) \alpha(n)>0,
\]
then there exist $C,c,\eta>0$ and $p\in(0,1]$ such that
\[
c \mathrm{e}^{-\eta t} \leq\E \bigl[ \mathrm{e}^{-\int_0^t p
\alpha(K_s) \,\mathrm{d}s} \bigr] \leq
C \mathrm{e}^{- \eta t},
\]
for any initial condition $K_0$ and every $t\geq0$.
\end{lem}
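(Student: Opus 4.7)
The plan is to reduce the statement to spectral analysis of a perturbation of the generator of $K$ via the Feynman-Kac formula. Let $Q=(Q_{ij})_{i,j\in S}$ be the generator of $K$ and let $D_\alpha=\mathrm{diag}(\alpha(i))_{i\in S}$. For $p\in\R$ set
$$
M^p = Q - p\,D_\alpha.
$$
A direct computation with Kolmogorov's backward equation (equivalently, Feynman-Kac) gives
$$
\E_i\bigl[e^{-\int_0^t p\alpha(K_s)\,ds}\bigr] = (e^{tM^p}\mathbf{1})_i,\qquad i\in S,
$$
so it suffices to control the large-$t$ behaviour of $e^{tM^p}\mathbf{1}$ for a well-chosen $p\in(0,1]$.

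The next step is Perron-Frobenius. Since $Q$ has non-negative off-diagonal entries and generates an irreducible chain, so does $M^p$, and for $R>0$ large enough $RI+M^p$ is a non-negative irreducible matrix. Perron-Frobenius therefore yields a real simple top eigenvalue $-\eta(p)$ of $M^p$, strictly greater than the real parts of all other eigenvalues, with strictly positive right and left eigenvectors $v_p$ and $\mu_p$. Because this eigenvalue is simple and $M^p$ depends analytically on $p$, Kato-type perturbation theory implies that $\eta$ is analytic in a neighbourhood of $0$. At $p=0$ one has $\eta(0)=0$, $v_0=\mathbf{1}$ and $\mu_0=\nu_K$; differentiating the eigenvalue relation $M^p v_p=-\eta(p)v_p$ at $p=0$ and pairing with $\nu_K$ (which annihilates the term containing $v'_0$ since $\nu_K Q=0$) yields
$$
\eta'(0) = \sum_{n\in S}\nu_K(n)\alpha(n) > 0
$$
by hypothesis. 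I then fix $p\in(0,1]$ small enough that $\eta:=\eta(p)>0$.

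Finally, the spectral decomposition of $M^p$ reads $e^{tM^p}=e^{-\eta t}\Pi+R_t$, where $\Pi = v_p\mu_p^\top/\langle\mu_p,v_p\rangle$ is the rank-one spectral projector onto the top eigenspace and $\|R_t\|$ decays like $e^{-(\eta+\delta)t}$ for some spectral gap $\delta>0$. Since $v_p$ and $\mu_p$ have strictly positive components and $S$ is finite, the components of the vector $\Pi\mathbf{1}$ are bounded between two positive constants, and hence there exist $c',C'>0$ and $t_0\geq 0$ such that $c'e^{-\eta t}\leq (e^{tM^p}\mathbf{1})_i\leq C'e^{-\eta t}$ for every $i\in S$ and every $t\geq t_0$. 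The two-sided bound on $[0,t_0]$ is obtained from the elementary pointwise estimate $e^{-pt\|\alpha\|_\infty}\leq \E[e^{-\int_0^t p\alpha(K_s)\,ds}]\leq e^{pt\|\alpha\|_\infty}$, after possibly shrinking $c$ and enlarging $C$. The only slightly delicate point is the application of analytic perturbation theory to identify the sign of $\eta'(0)$, but this is routine since the Perron eigenvalue is simple and isolated; everything else reduces to the fact that an irreducible non-negative matrix on a finite state space has a spectral gap with strictly positive Perron eigenvectors.
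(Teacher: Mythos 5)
Your argument is correct and follows exactly the route the paper indicates (it merely cites Perron--Frobenius and a study of eigenvalues, deferring to \cite{BGM}): you pass through the Feynman--Kac representation $\E_i\bigl[e^{-\int_0^t p\alpha(K_s)\,ds}\bigr]=(e^{tM^p}\mathbf{1})_i$, apply Perron--Frobenius to $M^p=Q-pD_\alpha$, and use analytic perturbation of the simple Perron eigenvalue to get $\eta'(0)=\sum_n\nu_K(n)\alpha(n)>0$, from which the two-sided bound follows by the rank-one spectral projection with strictly positive eigenvectors. This is precisely the content of the cited result, filled in with the standard details.
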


\begin{pf}
It is a consequence of Perron--Frobenius theorem and the study of
eigenvalues. See \cite{BGM}, Proposition~4.1, and \cite{BGM}, Proposition~4.2, for further details.
\end{pf}

Now we are able to prove that $\mathbf{P}$ possesses a Lyapunov
function in the case
where the switching rates do not depend on the location of the process.
%

%le2.8 #&#
\begin{lem}
\label{lem:Lyap-cst}
Under Assumptions~\ref{hyp:a-reg}, \ref{hyp:rec} and~\ref{hyp:Lyap},
if $a(x,i,j)$ does not depend on $x$ and $I$ has an invariant measure
$\nu$ satisfying
\[
\sum_{i\in F} \lambda(i) \nu(i)>0,
\]
then there exist $C_V, K_V, \lambda_V>0$ and $q\in(0,1]$ such that
\[
\forall t\geq0, \forall x\in E, \qquad\mathbf{P}_t V^{q}
(x,i) \leq C_V \mathrm{e}^{- \lambda_V t} V^{q} (x) +
K_V.
\]
\end{lem}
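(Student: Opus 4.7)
Because $a(x,i,j)$ does not depend on $x$, the discrete component $I$ is itself a Markov chain on the finite state space $F$ with generator $(a(i,j))_{i,j\in F}$; by Assumption~\ref{hyp:rec} (and finiteness of $F$) it is irreducible and positive recurrent with invariant measure $\nu$. The idea is to condition on the trajectory of $I$ and iterate \eqref{eq:Lyap-underlying} on each interval between two successive jumps.

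Let $0 = T_0 < T_1 < T_2 < \cdots$ denote the jump times of $I$, let $N_t$ count the jumps in $[0,t]$, and set $\phi(s,t) = \int_s^t \lambda(I_u)\,du$. A straightforward induction on $k$, applying \eqref{eq:Lyap-underlying} to $P^{(I_{T_k})}_{T_{k+1}-T_k}$ on the interval $[T_k,T_{k+1}]$ and using that $X$ is continuous at the jump times of $I$, gives
$$
\E_{(x,i)}\bigl[V(X_t) \mid \mathcal{F}^I\bigr] \;\le\; e^{-\phi(0,t)} V(x) + K \Bigl(1 + \sum_{k=1}^{N_t} e^{-\phi(T_k,t)}\Bigr),
$$
where $\mathcal{F}^I = \sigma((I_s)_{s\ge 0})$. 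Now pick $q\in(0,1]$ (to be fixed below). Two applications of the concavity of $x\mapsto x^q$ come into play: Jensen's inequality yields $\E[V^q \mid \mathcal{F}^I] \le \E[V \mid \mathcal{F}^I]^q$, and subadditivity $(a+b)^q \le a^q + b^q$ splits the right-hand side, leading to
$$
\E_{(x,i)}\bigl[V^q(X_t) \mid \mathcal{F}^I\bigr] \;\le\; e^{-q\phi(0,t)} V^q(x) + K^q\Bigl(1 + \sum_{k=1}^{N_t} e^{-q\phi(T_k,t)}\Bigr).
$$

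Next I apply Lemma~\ref{lem:Per-Frob} to the chain $I$ with the function $\lambda\colon F\to\R$: the hypothesis $\sum_i \nu(i)\lambda(i) > 0$ furnishes $q\in(0,1]$ and constants $C,\eta>0$ (uniform in the starting state since $F$ is finite) such that $\sup_{j\in F}\E_j\bigl[e^{-q\int_0^s \lambda(I_u)\,du}\bigr] \le Ce^{-\eta s}$ for all $s\ge 0$. Taking unconditional expectation, the first term is controlled by $Ce^{-\eta t}V^q(x)$. For the sum, the strong Markov property of $I$ at $T_k$ gives
$$
\E_i\bigl[e^{-q\phi(T_k,t)}\,\1_{T_k\le t}\bigr] \;\le\; C\,\E_i\bigl[e^{-\eta(t-T_k)}\,\1_{T_k\le t}\bigr],
$$
and since $N$ has compensator of rate at most $\bar a$ (Assumption~\ref{hyp:a-reg}),
$$
\sum_{k\ge 1}\E_i\bigl[e^{-\eta(t-T_k)}\,\1_{T_k\le t}\bigr] = \E_i\Bigl[\int_0^t e^{-\eta(t-s)}\,dN_s\Bigr] \;\le\; \bar a\int_0^t e^{-\eta(t-s)}\,ds \;\le\; \frac{\bar a}{\eta}.
$$
Combining these estimates produces
$$
\mathbf{P}_t V^q(x,i) \;\le\; C\,e^{-\eta t}\,V^q(x) + K^q\bigl(1 + C\bar a/\eta\bigr),
$$
which is the desired bound with $C_V = C$, $\lambda_V = \eta$ and $K_V = K^q(1+C\bar a/\eta)$.

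\textbf{Main obstacle.} The delicate point is that Lemma~\ref{lem:Per-Frob} only delivers exponential decay of $\E[\exp(-p\int_0^t\lambda(I_s)\,ds)]$ for some (possibly small) $p\in(0,1]$, not for $p=1$. This is what forces the Lyapunov function to be $V^q$ rather than $V$ itself, and makes the two concavity manipulations (Jensen and subadditivity) indispensable. Controlling the residual sum $\sum_{k}e^{-q\phi(T_k,t)}$ after taking expectation is the one computation that requires real care, and it is precisely the uniform rate bound $\bar a$ from Assumption~\ref{hyp:a-reg} that ensures it remains finite in $t$.
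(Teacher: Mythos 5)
Your argument is correct and, at a high level, mirrors the paper's: pass to $V^q$ with $q$ chosen from Lemma~\ref{lem:Per-Frob}, iterate \eqref{eq:Lyap-underlying} along the jump intervals of $I$, bound the leading term $\E\bigl[e^{-q\int_0^t\lambda(I_s)\,ds}\bigr]V^q(x)$ by $Ce^{-\eta t}V^q(x)$, and show the residual sum of constants is uniformly bounded. (The order in which you apply Jensen --- once to the full conditional expectation rather than once per step as in the paper --- is an immaterial reshuffling.) Where you genuinely diverge is in the treatment of the residual sum. The paper leaves this step to Lemma~\ref{lem:Lyap-noncst}, where it introduces a uniformizing Poisson process at rate $r \ge 2\bar a$, shows $\tau_n$ concentrates near $n/r$, and then runs a three-way case split with Chernoff-type bounds to obtain geometric decay of $\E\bigl[e^{-q\int_0^{\tau_n}\lambda\,ds}\bigr]$. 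You instead observe that the residual terms, after the strong Markov property at $T_k$ together with Lemma~\ref{lem:Per-Frob}, assemble into $\E\bigl[\int_0^t e^{-\eta(t-s)}\,dN_s\bigr]$, and then you bound this directly via the compensator of the jump-counting process, whose intensity is at most $\bar a$ by Assumption~\ref{hyp:a-reg}. This yields the clean bound $\bar a/\eta$ in one line, avoiding both the auxiliary Poisson process and the concentration estimates. It is a shorter, more robust route to the same constant, and exploits exactly the same boundedness hypothesis $\bar a<\infty$; the only thing one loses is the explicit geometric control of $\E\bigl[e^{-q\int_0^{T_n}\lambda\,ds}\bigr]$ in $n$, which the paper reuses (via the auxiliary process $L$) in the non-constant-rate Lemma~\ref{lem:Lyap-noncst} and Lemma~\ref{lem:d-contract}, so your shortcut applies cleanly in the constant case but would need supplementing there.
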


In the previous lemma, we used a slight abuse of notation. Indeed, if
$f$ is a function defined on $E$, we also denote by $f$ the mapping
$(x,i) \mapsto f(x)$ on $\mathbf{E}$.
\begin{pf} First, Jensen's inequality gives this weaker form of
\eqref{eq:Lyap-underlying}:
\[
P^{(i)}_t \bigl(V^{q}\bigr) (x) \leq
\mathrm{e}^{- q \lambda(i) t} V^{q}(x) + K^{q},
\]
for every $q\in(0,1]$. Now, for all $t\geq0$ and $(x,i) \in\mathbf
{E}$, a straightforward recurrence gives
\begin{eqnarray*}
\mathbf{P}_t V^{q} (x,i) &=& \E \bigl[
P^{(I_{T_{N_t}})}_{t -T_{N_t}} \circ P^{(I_{T_{N_t}
-1})}_{T_{N_t} -T_{N_t-1}} \circ
\cdots\circ P^{(I_{0})}_{T_1- T_0} \bigl(V^{q}\bigr) (x)
\bigr]
\\
&\leq&\E \bigl[ \mathrm{e}^{-\int_0^t q \lambda(I_s) \,\mathrm
{d}s } \bigr] V^q(x) +
K^q \sum_{n \geq0} \E \bigl[
\mathrm{e}^{-q \int_0^{T_n} \lambda
(I_s) \,\mathrm{d}s} \bigr],
\end{eqnarray*}
where $(T_k)_{k\geq0}$ is the sequence of jump times of $I$, with
$T_0=0$, and $N_t$ the number of jumps before $t$. By Lemma~\ref
{lem:Per-Frob}, there exist $C>0, \eta>0$ and $q \in(0,1]$ such that
\[
\E \bigl[ \mathrm{e}^{-\int_0^t q \lambda(I_s) \,\mathrm{d}s
} \bigr] \leq C \mathrm{e}^{-\eta t}.
\]
Furthermore, one can show that $T_n$ is of order $n$ and that
\[
K_V= K^q \sum_{n \geq0} \E
\bigl[ \mathrm{e}^{-q \int_0^{T_n}
\lambda(I_s)
\,\mathrm{d}s} \bigr] \lesssim K^q \sum
_{n \geq0} \mathrm {e}^{-\varepsilon n} <+ \infty,
\]
for some $\varepsilon>0$. We do not detail this argument now, but we
will prove it in the
slightly more difficult context of non-constant rate $a$ in
Lemma~\ref{lem:Lyap-noncst}. This concludes the proof.
\end{pf}

%
%re2.9 #&#
\begin{Rq}[(On the assumption that $F$ is finite)]
It is natural to extend our results to the case where $F$ is countably infinite.
Obviously, we then have to add the assumption that $I$ is positive
recurrent, but this is not enough.
Indeed, if for each $i\in F$, $C_1(i)$ and $C_2(i)$ denote the
constants $C_1, C_2$, appearing
in Lemma~\ref{lem:Curv-Lyap} applied on $Z^{(i)}$, then we should
furthermore assume that
\[
\sup_{i \in F} \bigl(C_1(i) + C_2(i)
\bigr) < + \infty,
\]
for the argument to go through.
\end{Rq}

%s2.2 #&#
\subsection{Proof of Theorem \texorpdfstring{\protect\ref{th:W-cst}}{1.4}}

The proof of this result is obtained by a coupling construction. We
first give
a description of this construction and we then turn to the proof itself.
Throughout this section, we make the standing assumption that
the hypotheses of Theorem~\ref{th:W-cst} hold. In particular, $I$ is
an ergodic
finite-state Markov chain.

Let $\mathbf{x}=(x,i)$ and $\mathbf{y}=(y,j)$ be two points of
$\mathbf{E}$, we will build a coupling $(\mathbf{X},\mathbf{Y})$,
starting from $(\mathbf{x},\mathbf{y})$, such that each component is
an instance
of the Markov process generated by $\mathbf{L}$, and such that
the distance $\mathbf{d}(\mathbf{X}_t, \mathbf{Y}_t)$ decreases to $0$
at exponential rate.
From now on, we fix the starting points of our coupling $\mathbf
{x}=(x,i)$, $\mathbf{y}=(y,j)$. The processes $(\mathbf{X}_t)_{t\geq
0}=(X_t,I_t)_{t\geq0}$ and $(\mathbf{Y}_t)_{t\geq0}=(Y_t,J_t)_{t\geq
0}$ are
then constructed as follows:
\begin{itemize}
\item First, we run both processes independently until the first hitting
time $T_c = \inf\{ t\geq0 | I_t= J_t \}$ of the two components $I$
and $J$.
In case we start with an initial condition such that $i=j$,
then we simply set $T_c=0$.
\item For times $s\geq T_c$, we set $I_s=J_s$ and we couple $X$ and $Y$
in such a way that
\[
\forall k \geq0, \qquad\E \bigl[ d(X_{S_k}, Y_{S_k}) |
\mathcal {F}_{S_{k-1}} \bigr] \leq\mathrm{e}^{- \rho(I_{S_{k-1}}) (S_{k} -
S_{k-1}) }
d(X_{S_{k-1}}, Y_{S_{k-1}}),
\]
where $(T_k)_{k\geq0}$ is the sequence of jumps times of $I$, $S_k=T_k
\wedge t$ and $(\mathcal{F}_s)_{s\geq0}$ is the natural filtration
associated to $(\mathbf{X},\mathbf{Y})$.
\end{itemize}
The existence of a coupling satisfying the second point is an immediate
consequence of
Assumption~\ref{hypo:Curv}.

%We set. If $I_t \neq J_t$ then we consider that $\mathbf{X}$ and $
%we have to build a coupling in such a way that the distance between
%$X$ and $Y$ will decrease in mean. It is not a real problem \ldots

\begin{pf*}{Proof of Theorem~\ref{th:W-cst}}
Recall first that if $I$ and $J$ are two independent finite-state
Markov chains
with transition rate $a$ as in the statement of Theorem~\ref{th:W-cst},
then there exist constants $C_c$, $\theta_c>0$ such that
%
%
%e2.7 #&#
\begin{equation}
\label{eq:coalescent} \forall t\geq0,\qquad\p ( T_c > t ) \leq
C_c \mathrm {e}^{-\theta_c t},
\end{equation}
for any two initial conditions $I_0$ and $J_0$.

If $i=j$, then by Jensen's inequality and iteration, we have similarly
to before
\[
\E \bigl[ d(X_t, Y_t)^q \bigr] \leq\E \bigl[
\mathrm{e}^{-q\int
_0^t \rho
(I_s) \,\mathrm{d}s } \bigr] d(x,y)^q,
\]
where $q\in(0,1]$. By Lemma~\ref{lem:Per-Frob}, there exist $C,\eta
>0$ and $q\in(0,1]$ such that
\[
\E \bigl[ d(X_t, Y_t)^q \bigr] \leq C
\mathrm{e}^{-\eta t} d(x,y)^q.
\]
Now, for general $i$ and $j$, we have
\begin{eqnarray*}
\E \bigl[ \mathbf{d}(\mathbf{X}_t, \mathbf{Y}_t) \bigr]
& \leq&\E \bigl[ \sqrt{\one_{ T_c \geq t/2} \bigl(1 + V^q(X_t)
+ V^q(Y_t)\bigr)} \bigr]
\\
&&{}+ \E \bigl[ \sqrt{\one_{T_c \leq t/2} d(X_t,
Y_t)^q \bigl(1 + V^q(X_t) +
V^q(Y_t)\bigr)} \bigr],
\end{eqnarray*}
where $V(x)=d(x,x_0)$. Now, Cauchy--Schwarz inequality, Equation \eqref
{eq:coalescent}, Lemma~\ref{lem:Curv-Lyap} and Lemma~\ref
{lem:Lyap-cst} give
\begin{eqnarray*}
\E \bigl[ \sqrt{\one_{ T_c \geq t/2} \bigl(1 + V^q(X_t)
+ V^q(Y_t)\bigr)} \bigr] &\leq&\p (T_c \geq
t/2 )^{1/2} \E \bigl[1 + V^q(X_t) +
V^q(Y_t) \bigr]^{1/2}
\\
& \leq& C_c \mathrm{e}^{-\theta_c t/4} \bigl(1 + C_V
\mathrm{e}^{-
\lambda_V t} \bigl(V^{q} (x) + V^{q} (y) \bigr)
+ 2 K_V \bigr)^{1/2}.
\end{eqnarray*}
In the other hand, one has the bound
%
%
%e2.8 #&#
\begin{eqnarray}
\label{e:secondFact} &&\E \bigl[ \sqrt{\one_{T_c \leq t/2} d(X_t,
Y_t)^q \bigl(1 + V^q(X_t) +
V^q(Y_t)\bigr)} \bigr]
\nonumber
\\[-8pt]
\\[-8pt]
&&\quad\leq\E \bigl[ \one_{T_c \leq t/2} d(X_t,
Y_t)^q \bigr]^{1/2} \E \bigl[1 +
V^q(X_t) + V^q(Y_t)
\bigr]^{1/2}.
\nonumber
\end{eqnarray}
As a consequence of Lemmas~\ref{lem:Curv-Lyap} and~\ref
{lem:Lyap-cst}, we also have the bound
\begin{eqnarray*}
\E \bigl[ \one_{T_c \leq t/2} d(X_t, Y_t)^q
\bigr]^{1/2} &\leq& C \mathrm{e}^{- \eta t/2} \E \bigl[
d(X_{T_c}, Y_{T_c})^q \one_{T_c \leq
t/2}
\bigr]^{1/2}
\\
&\leq& C \mathrm{e}^{- \eta t/2} \E \bigl[ \bigl(V(X_{T_c})^q
+ V(Y_{T_c})^q \bigr)\one_{T_c \leq t/2}
\bigr]^{1/2}
\\
&\leq& C \mathrm{e}^{- \eta t/2} \bigl[C_V V^q(x_0)
+ C_V V^q(y_0) + 2K_V
\bigr]^{1/2}.
\end{eqnarray*}
Assembling these inequalities and using again Lemma~\ref{lem:Lyap-cst}
to bound the second factor in
\eref{e:secondFact}, we find that there exist constants $C>0$ and
$\lambda>0$ such that
\[
\E\bigl[\mathbf{d}(\mathbf{X}_t, \mathbf{Y}_t)\bigr]
\leq C\mathrm {e}^{-\lambda t} \bigl(1 +V(x) + V(y)\bigr),
\]
for every $t\geq0$ and $x,y\in E$. (Recall that $x$ and $y$ denote the
$E$-components of the initial conditions.) As a consequence of this
bound and the definition
of the Wasserstein distance, we deduce that
%
%
%e2.9 #&#
\begin{equation}
\label{eq:eqfinal} \mathcal{W}_\mathbf{d} (\boldsymbol{\mu}
\mathbf{P}_t, \boldsymbol{\nu} \mathbf{P}_t ) \leq C
\mathrm{e}^{-\lambda
t} \biggl(1 + \sum_{i\in F} \int
_E \bigl(V(x) \boldsymbol{\nu} (\mathrm{d}x,i) + V(x)
\boldsymbol{\mu}(\mathrm{d}x,i)\bigr) \biggr),
\end{equation}
for any two probability measures $\boldsymbol{\mu}$ and $\boldsymbol
{\nu}$. Now, mimicking the proof of \cite{HM11}, Corollary~4.10, we
can prove the existence of an invariant measure. More precisely, fix a
probability measure $\boldsymbol{\mu}$
and note that \eref{eq:eqfinal} implies that $(\boldsymbol{\mu}
\mathbf{P}_{n })_{n\geq0}$ is a Cauchy sequence with respect to the
distance $\mathcal{W}_\mathbf{d}$. We deduce that it converges to a
measure $\boldsymbol{\mu}_\infty$ verifying
\[
\boldsymbol{\mu}_\infty\mathbf{P}_{1}= \boldsymbol{
\mu}_\infty.
\]
It immediately follows that $\boldsymbol{\pi} = \int_0^1 \boldsymbol
{\mu}_\infty\mathbf{P}_{u} \,\mathrm{d}u$ is invariant, just like
in the
classical proof of the
Krylov--Bogolioubov criterion.
\end{pf*}

%s2.3 #&#
\subsection{Proof of Theorem \texorpdfstring{\protect\ref{th:dtv-cst}}{1.7}}

%Here we recall some definitions and theorems from \cite{HMS}. Usually
%the notion of Lyapunov function is associated to the notion of small
%set:

Before we start the proof proper,
we recall a version of Harris' theorem
(also called Foster, Lyapunov, Meyn-Tweedie, Doeblin in the literature) that
is suitable for our needs. This theorem yields exponential convergence
to stationarity for a process
which does not ``escape to infinity'' and verifies furthermore a
Doeblin-type condition.
More precisely, we use the following notion of a small set:
%

%de2.10 #&#
\begin{defi}
\label{def:small}
A set $A \subset X$ is small for the semigroup $(P_t)_{t\geq0}$ over a
Polish space $(X,d_X)$, if there exists a time $t > 0$
and a constant $\varepsilon>0$ such that
\[
d_{\TV} (\delta_x P_t,\delta_y
P_t) \leq1-\varepsilon
\]
for every $x, y \in A$.
\end{defi}

The classical Harris theorem \cite{HM11,MT93} then states that

%
%th2.11 #&#
\begin{theo}[(Harris)]
\label{th:Harris}
Let $(P_t)_{t\geq0}$ be a Markov semigroup over a Polish space
$(X,d_X)$ such that there exists a
Lyapunov function $V$ with the additional property that the sublevel
sets $\{x \in X | V(x) \leq C \}$ are small for
every $C>0$. Then $(P_t)_{t\geq0}$ has a unique invariant measure $\pi
$ and
\[
d_\TV(\delta_x P_t,\pi) \leq C
\mathrm{e}^{-\gamma_* t} \bigl(1+V(x)\bigr),
\]
for some positive constants $C$ and $\gamma_*$.
\end{theo}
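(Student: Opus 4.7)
The plan is to reduce the continuous-time statement to a discrete-time geometric contraction for $\mathbf{P}_{t_*}$ at some fixed time $t_* > 0$, using a weighted distance tailored to $V$ and to the small-set hypothesis. I should emphasize up front that this is essentially the Hairer--Mattingly proof of Harris' theorem, which trades the classical total-variation arguments for a single contraction estimate in a Wasserstein-like metric.

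First, I would iterate the Lyapunov inequality to pick $t_* > 0$ for which $P_{t_*} V \le \gamma V + K$ with $\gamma \in (0,1)$, and then pick $R$ large enough (e.g.\ $R > 4K/(1-\gamma)$) so that the strict inequality $P_{t_*} V(x) + P_{t_*} V(y) < V(x) + V(y) - \eta$ holds whenever $\max(V(x),V(y)) > R$, for some $\eta > 0$. By hypothesis $\{V \le R\}$ is small, so after possibly enlarging $t_*$ to a common multiple of the two time scales, there exist $\varepsilon \in (0,1)$ and a probability measure $\nu$ such that $\delta_x P_{t_*} \wedge \delta_y P_{t_*} \ge \varepsilon\, \nu$ for all $x,y$ with $V(x), V(y) \le R$. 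This minorisation is the standard equivalent formulation of smallness and will be the source of total-variation decay.

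Second, I would introduce the semi-metric
$$
d_\beta(x,y) = \1_{x \neq y} \bigl(1 + \beta V(x) + \beta V(y)\bigr),
$$
on $X$ (it satisfies the triangle inequality since it is $\ge 2$ off the diagonal and $\le 1 + \beta V(x) + \beta V(z) + 1 + \beta V(z) + \beta V(y)$), and the associated Wasserstein transport cost $\mathcal{W}_{d_\beta}$. The core step is to prove that for $\beta > 0$ small enough there exists $\alpha < 1$ with
$$
\mathcal{W}_{d_\beta}\bigl(\delta_x P_{t_*}, \delta_y P_{t_*}\bigr) \le \alpha\, d_\beta(x,y), \qquad x,y \in X.
$$
The verification splits into three cases: if $\max(V(x),V(y)) > R$, the Lyapunov bound and the strict inequality give contraction of the $\beta V$ piece (with the $\1_{x\neq y}$ piece being bounded by $1$); if $V(x), V(y) \le R$, the minorisation yields a coupling $(X_{t_*}, Y_{t_*})$ with $\p(X_{t_*} = Y_{t_*}) \ge \varepsilon$, so that the indicator contracts by a factor $1 - \varepsilon$ while the $\beta V$ contribution is controlled using $P_{t_*}V \le \gamma R + K$ on the small set; a mixed case is handled by combining the two bounds. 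Tuning $\beta$ so that the gain $1 - \varepsilon$ from the successful coupling dominates the loss from the Lyapunov growth on the small set is the most delicate point.

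Third, the contraction implies by Banach's fixed point theorem, applied on probability measures with finite $V$-moment equipped with $\mathcal{W}_{d_\beta}$ (a complete metric space), that $P_{t_*}$ has a unique invariant measure $\pi$, and by iteration
$$
\mathcal{W}_{d_\beta}\bigl(\delta_x P_{n t_*}, \pi\bigr) \le \alpha^n \int d_\beta(x,y)\, \pi(dy) \le C\, \alpha^n (1 + V(x)),
$$
where the last bound uses that $\pi$ has finite $V$-moment, itself a consequence of the Lyapunov bound applied to $\pi$. Since $d_\TV \le d_\beta$ pointwise, this gives the claimed bound at times of the form $n t_*$. To extend to arbitrary $t \ge 0$, write $t = n t_* + s$ with $s \in [0, t_*)$, use the non-expansivity of $P_s$ in total variation, and use the Lyapunov bound $P_s V \le C_V V + K_V$ to absorb the $V$-dependence into a renamed constant $C$ and rate $\gamma_* = -\log \alpha / t_*$. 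Uniqueness of $\pi$ as an invariant measure for the full semigroup follows because any such measure is in particular $P_{t_*}$-invariant.

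The principal obstacle is the three-case contraction estimate, specifically the small-set case: one must verify that the coupling arising from the minorisation can be combined with the Lyapunov contraction in a way that produces a \emph{single} contraction constant $\alpha < 1$ valid for all $x, y$, and this forces the quantitative choice of $\beta$ in terms of $\gamma$, $K$, $R$ and $\varepsilon$. Once this is in hand, the rest of the argument is a standard passage between discrete and continuous time and an application of Banach's fixed point theorem.
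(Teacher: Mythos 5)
Your proposal is correct: it is precisely the Hairer--Mattingly proof of Harris' theorem (weighted semi-metric $d_\beta$, three-case contraction of $\mathbf{P}_{t_*}$ in $\mathcal{W}_{d_\beta}$, Banach fixed point, then passage to continuous time), which is exactly what the paper relies on here --- the paper gives no proof of this statement and simply cites \cite{HM11,MT93}. The only imprecision is your claim that the pairwise small-set condition is equivalent to a uniform minorisation $\delta_x P_{t_*}\wedge\delta_y P_{t_*}\ge\varepsilon\nu$ with a \emph{single} measure $\nu$; this equivalence is not needed (and is not automatic on a general set), since your contraction step only uses the pairwise coupling with $\p(X_{t_*}=Y_{t_*})\ge\varepsilon$, which is exactly Definition~\ref{def:small}.
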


Note that one does not really need that \textit{all} sublevel sets are
small and
one can have a slightly stronger conclusion by using a total variation
distance weighted by $V$, see, for example, \cite{HM11}, Theorem~1.3.

\begin{pf*}{Proof of Theorem~\ref{th:dtv-cst}}
By Lemma~\ref{lem:Lyap-cst}, $\mathbf{P}$ admits $V$ as Lyapunov
function so, by Harris' theorem,
it only remains
to show that $\{ V \leq C \}$ is small for $\mathbf{P}$, for every
$C>0$. Since $V$ is a Lyapunov function, there exists $t_*^{(1)}>0$ and
$K>K_V$ (with $K_V$ as in Lemma~\ref{lem:Lyap-cst}) such that
\[
\forall t\geq t_*^{(1)},\qquad\E \bigl[V(X_t) \bigr] \le K,
\]
uniformly over all $x\in E$ such that $V(x)\leq C$. Therefore, if
$\mathbf{X}$ is a process generated by $\mathbf{L}$, it follows from
Markov's inequality that
\[
\p \bigl(V(X_{t}) \le2K \bigr) \ge\tfrac{1}{2},
\]
uniformly over $t \ge t_*^{(1)}$.

Let now $i_0 \in F$ be as in the statement. Since $A=\{ V \leq2K \}$
is small for $P^{(i_0)}$, we obtain some $t_0>0$ and $\varepsilon>0$,
such that for all $x,y\in A$ there
exists a coupling $(Z^{i_0,x}_t,Z^{i_0,y}_t)$ verifying
%
%
%e2.10 #&#
\begin{equation}
\label{e:boundZ} \p \bigl( Z^{i_0,x}_t = Z^{i_0,y}_t
\bigr) \geq\varepsilon,\qquad t \ge t_0,
\end{equation}
and $Z^{i_0,x}_t$, $Z^{i_0,y}_t$ have respective law $\delta_x
P^{(i_0)}_t$, $\delta_y P^{(i_0)}_t$.

By the irreducibility of the process $I$, one can find $t_* >
t_*^{(1)}$ and $\delta> 0$ such that $\p(I_s = i_0, \forall s \in
[t_*,t_*+ t_0 ]) > \delta$, uniformly over the starting distributions.
Let now
$(\mathbf{X}_t,\mathbf{Y}_t)$ be the following coupling:
\begin{itemize}
\item the Markov chains $I$ and $J$ are independent over $t\in[0, t_*
+ t_0]$;
\item the processes $X$ and $Y$ are independent over $t\in[0, t_*]$;
\item conditionally on the set
\[
B=\bigl\{V(X_{t_*}) \le2K, V(Y_{t_*}) \le2K,
I_s = J_s = i_0, \forall s \in[t_*,t_*+
t_0 ] \bigr\},
\]
the processes $X$ and $Y$ are coupled in such a way to verify \eqref
{e:boundZ}, over $t\in[t_*, t_* + t_0]$;
\item conditionally on $B^c$, they are coupled independently from each other.
\end{itemize}
The Markov property gives
%
%
%e2.11 #&#
\begin{equation}
\p\bigl(V(X_{t_*}) \le2K, I_s = i_0, \forall
s \in[t_*,t_*+ t_0 ]\bigr) \ge \frac{\delta}{2},
\end{equation}
and so $\p(B)\ge\delta^2 /4$. Combining this inequality with
\eqref{e:boundZ}, we conclude that
$\p(\mathbf{X}_{t_*+t_0} = \mathbf{Y}_{t_*+t_0}) \ge\delta^2
\varepsilon/4$, uniformly over all
initial conditions $\mathbf{x}$ and $\mathbf{y}$ with $V(x) \le C$
and $V(y)\le C$, as required.
\end{pf*}

%s3 #&#
\section{Non-constant jump rates}
\label{sect:non-cst}

In all of this section, we now assume that $a$ depends non-trivially on
its first component,
so that $I$ by itself is not a Markov process anymore. We want to use again
Lemma~\ref{lem:Per-Frob} to show that $\mathbf{X}$ converges, but
this time we cannot use it directly
on $I$. The idea is to consider an auxiliary process which does not
depend to $X$ and which will bound
$(\rho(I_t))_{t\geq0}$ or $(\lambda(I_t))_{t\geq0}$. More
precisely, we will assume
the following assumption.
%

%as3.1 #&#
\begin{hypo}[(Birth--death type criterion in the non constant case)]
\label{hyp-iborne}
There exist $\bar{n} \in\N$ and a partition $(F_n)_{0 \leq n \leq
\bar{n}}$ of $F$ such that
\[
\forall n \leq\bar{n}, \forall i \in F_n, \forall j \notin
F_{n-1} \cup F_n \cup F_{n+1}, \forall x\in E,
\qquad a(x,i,j)=0,
\]
where we have set $F_{-1} =F_{\bar{n} +1} = \emptyset$.
Let $(L_t)_{t\geq0}$ be the continuous-time Markov chain on $\{
0,\ldots,\bar n\}$
with generator
%
%
%e3.1 #&#
\begin{equation}
\label{eq:def-bd-L} G f(n) = b(n) \bigl( f(n+1) - f(n) \bigr) + d(n) \bigl( f(n-1) -
f(n) \bigr),
\end{equation}
for every $n \leq\bar{n}$, where $d(0)=b(\bar{n})=0$,
\[
b(n) = \inf_{x\in E} \inf_{i \in F_n} \sum
_{j \in F_{n+1}} a(x,i,j) > 0,
\]
for $n < \bar n$ and
\[
d(n) = \sup_{x\in E} \sup_{i \in F_n} \sum
_{j \in F_{n-1}} a(x,i,j) > 0,
\]
for $n > 0$.
\end{hypo}

%
%re3.2 #&#
\begin{Rq}
The process with generator $G$ is irreducible, non-explosive and
positive recurrent.
We will henceforth denote its invariant measure by $\nu$.
\end{Rq}

If Assumption~\ref{hyp-iborne} holds then, for every $i\in F$, we
denote by $n_i$ the only $n\leq\bar{n}$ verifying $i\in F_n$.
Let us recall that, for every $n\leq\bar{n}$, the invariant measure
$\nu$ is given by
\[
\nu(n)= \nu(0) \prod_{k=1}^n
\frac{b(k-1)}{d(k)} \quad\mbox {and}\quad\nu (0) = (1 + \Xi )^{-1},
\]
where
\[
\Xi= \sum_{n=1}^{\bar{n}} \frac{b(0) \cdots b(n-1)}{d(1) \cdots d(n)}.
\]
Now we can state two slight generalisations of Theorems~\ref
{th:W-noncst} and~\ref{th:dtv-noncst}. The first one is
%

%th3.3 #&#
\begin{theo}[(Wasserstein exponential ergodicity)]
\label{th:W-noncst2}
Suppose that Assumptions~\ref{hyp:a-reg}, \ref{hyp:rec}, \ref
{hypo:Curv}, and~\ref{hyp-iborne} hold. If
\[
\sum_{n = 0}^{\bar{n}} \nu(n) \alpha(n) > 0,
\]
where $(\alpha(n))_{n\geq0}$ is an increasing sequence verifying
$\alpha(n)\leq\inf_{i \in F_n} \rho(i)$,
then there exist a probability measure $\boldsymbol{\pi} $ and some
constants $C, \lambda, t_0>0$
and $q\in(0,1]$ such that
\[
\forall t\geq t_0,\qquad\mathcal{W}_{\mathbf{d}} (\delta
_{\mathbf
{y}_0} \mathbf{P}_t, \boldsymbol{\pi} ) \leq C \mathrm
{e}^{-\lambda t} \biggl(1+\sum_{i\in F} \int
_E d(y_0,x)^q \boldsymbol{\pi} (
\mathrm{d}x,i) \biggr),
\]
for every $\mathbf{y}_0 = (y_0,j_0) \in\mathbf{E}$. Here, the
distance $\mathbf{d}$ on $\mathbf{E}$ was defined in \eqref{eq:dgras}.
\end{theo}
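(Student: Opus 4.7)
The plan is to follow the overall strategy of Theorem~\ref{th:W-cst} with two key modifications needed because $I$ is no longer Markovian on its own. First, I would dominate the process $(n_{I_t})_{t\geq 0}$ by the auxiliary birth-death chain $L$ of Assumption~\ref{hyp-iborne}, so that Lemma~\ref{lem:Per-Frob} can still be used to extract exponential decay. Second, instead of the ad hoc bounds used for Theorem~\ref{th:W-cst}, I would appeal to the weak form of Harris' theorem announced in Section~\ref{sect:non-cst} to assemble the Lyapunov bound and a contractive coupling into the final ergodicity statement.

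The first step is to establish a Lyapunov function for $\mathbf{P}$ of the form $V^q$ with $V(x)=d(x,x_0)$ and some $q\in(0,1]$, which is the non-constant analogue of Lemma~\ref{lem:Lyap-cst}. The proof mimics Lemma~\ref{lem:Lyap-cst}: after iterating the semigroup bound of Assumption~\ref{hypo:Curv} and applying Jensen, everything reduces to controlling the quantities
\begin{equation*}
\E\Bigl[\exp\Bigl(-q\int_0^t \rho(I_s)\,ds\Bigr)\Bigr]\quad\text{and}\quad \sum_{n\geq 0}\E\Bigl[\exp\Bigl(-q\int_0^{T_n}\rho(I_s)\,ds\Bigr)\Bigr].
\end{equation*}
The crucial ingredient is a pathwise coupling of $(n_{I_s})_{s\geq 0}$ with the birth-death chain $L$, built so that $L_s\leq n_{I_s}$ at all times. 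This coupling exists because the birth (resp.\ death) rates of $L$ are by definition uniform lower (resp.\ upper) bounds on the actual transition rates of $n_{I_s}$, regardless of the value of $X_s$. Since $\alpha$ is increasing and $\alpha(n)\leq\inf_{i\in F_n}\rho(i)$, one obtains $\alpha(L_s)\leq \rho(I_s)$, so both quantities above are majorised by their $L$-analogues. Lemma~\ref{lem:Per-Frob} applied to $L$ (using $\sum_n \nu(n)\alpha(n)>0$) then supplies exponential decay, and a separate verification that $T_n$ grows at least linearly in $n$ with overwhelming probability, relying on the uniform upper bound $\bar a$ from Assumption~\ref{hyp:a-reg}, handles the convergence of the sum.

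With the Lyapunov function in hand, I would build a coupling $(\mathbf{X}_t,\mathbf{Y}_t)$ of two copies of the process exactly as in the proof of Theorem~\ref{th:W-cst}: evolve the two processes independently until the first meeting time $T_c$ of the discrete components $I$ and $J$, then force $I_s=J_s$ for $s\geq T_c$ and couple the continuous components Wasserstein-optimally between successive jumps so as to contract at rate $\rho(I_s)$ on each inter-jump interval. The meeting time $T_c$ again has exponential tails, thanks to the $L$-domination and Assumption~\ref{hyp:rec}. The same Cauchy--Schwarz splitting as in the proof of Theorem~\ref{th:W-cst} — separating $\{T_c\leq t/2\}$ from its complement, applying the Wasserstein contraction after coalescence and the Lyapunov estimate to the $V^q$ factor — produces, for some $t_0>0$, a one-step contraction of the form
\begin{equation*}
\E[\widetilde{\mathbf{d}}(\mathbf{X}_{t_0},\mathbf{Y}_{t_0})]\leq \tfrac{1}{2}\widetilde{\mathbf{d}}(\mathbf{x},\mathbf{y})
\end{equation*}
for the weighted semi-distance $\widetilde{\mathbf{d}}$ of Section~\ref{sect:cst}.

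The final step is to feed this one-step contraction together with the Lyapunov bound into the weak Harris theorem, yielding a unique invariant measure $\boldsymbol{\pi}$ and exponential convergence of $\delta_{\mathbf{y}_0}\mathbf{P}_t$ to $\boldsymbol{\pi}$ in $\mathcal{W}_{\widetilde{\mathbf{d}}}$, which in turn implies the convergence in $\mathcal{W}_{\mathbf{d}}$ stated in the theorem after using the elementary bound $\mathbf{d}\leq \widetilde{\mathbf{d}}$ and the Lyapunov control of the $V^q$ weight. I expect the main obstacle to be Step~1: explicitly realising the dominating coupling $L_s\leq n_{I_s}$ in the presence of $X$-dependent rates, and rigorously establishing the convergence of $\sum_n \E[\exp(-q\int_0^{T_n}\alpha(L_s)\,ds)]$. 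This is precisely the argument only sketched in the proof of Lemma~\ref{lem:Lyap-cst}, and carrying it out here requires combining the exponential decay from Lemma~\ref{lem:Per-Frob} with a quantitative linear lower bound on $T_n$ coming from $\bar a<\infty$.
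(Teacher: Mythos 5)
Your Step~1 (the Lyapunov function via domination of $n_{I_t}$ by the birth--death chain $L$) matches the paper's Lemma~\ref{lem:Lyap-noncst}, including the technique of bounding $\sum_n\E[\exp(-q\int_0^{\tau_n}\alpha(L_s)ds)]$ using the uniform bound $\bar a$ on the Poisson clock; this part of your plan is sound. The problem is your Step~2.

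You propose to replicate the coupling of Theorem~\ref{th:W-cst}: run $\mathbf{X}$ and $\mathbf{Y}$ independently until the first meeting time $T_c$ of the discrete components, then \emph{force} $I_s=J_s$ for all $s\geq T_c$. This is precisely what fails when $a$ depends on its first component, and the paper flags it explicitly: ``there is no direct equivalent to the meeting time.'' Once $I_{T_c}=J_{T_c}$ but $X_{T_c}\neq Y_{T_c}$, the two components jump with rates $a(X_s,I_s,\cdot)$ and $a(Y_s,J_s,\cdot)$, which are generically different; a coupling that keeps $I_s=J_s$ for all subsequent times would violate the marginal law of at least one of the two processes. So the exponential-tail bound on ``$I$ and $J$ never separate after $T_c$'' is unavailable, and the Cauchy--Schwarz split of the $\{T_c\le t/2\}$ / $\{T_c>t/2\}$ events collapses. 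This also undermines your intended one-step contraction in the weighted semi-distance $\widetilde{\mathbf d}$.

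The paper's way out is structurally different and is the genuinely new content of Section~\ref{sect:non-cst}: it \emph{does not} attempt to make $I$ and $J$ coalesce. Instead, it introduces the bounded distance $\widetilde d(\mathbf{x},\mathbf{y})=\1_{i\neq j}+\1_{i=j}(\delta^{-1}d^q(x,y)\wedge 1)$, restricts attention in the contraction estimate to $\widetilde d(\mathbf{x},\mathbf{y})<1$ (hence $I_0=J_0$), and uses a maximal coupling of the jump times to bound the probability $\p(T<\infty)$ that $I$ and $J$ ever \emph{separate} by a constant times $d(x,y)^q$, via the Lipschitz condition in Assumption~\ref{hyp:a-reg} (Lemma~\ref{lem:d-contract}). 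The case where $I_0\neq J_0$, which you try to handle through $T_c$, is instead absorbed into the $\widetilde d$-small-set hypothesis of the weak Harris theorem (Lemma~\ref{lem:d-small}), where boundedness of the rates and recurrence of the minorising chain suffice to give a positive probability of bringing both discrete components to a fixed contracting state $i_0$ simultaneously. Only then does the weak Harris theorem assemble the Lyapunov bound, the contracting $\widetilde d$, and the $\widetilde d$-smallness into the weighted-distance conclusion. You invoke the weak Harris theorem in Step~4, but you actually need its structure much earlier: separating the ``contraction when $\widetilde d<1$'' estimate from the ``$\widetilde d$-small set'' estimate is what replaces the unavailable meeting time. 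As written, your proof has a gap at the coupling step that cannot be patched within the $T_c$ framework.
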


If Assumption~\ref{hyp-iborne} holds with $\bar{n}=0$ then all
contraction parameters are positive and we recover \cite{MLMZ-Quant}, Theorem~1.15. If it holds with $\bar{n}=1$, then we have the
on--off criterion which was given in introduction. We can also state the
analogous result in the setting
of Theorem~\ref{th:dtv-noncst}:

%
%th3.4 #&#
\begin{theo}[(Exponential ergodicity)]
\label{th:dtv-noncst2}
Suppose that Assumptions~\ref{hyp:a-reg}, \ref{hyp:rec}, \ref
{hypo:Curv} and~\ref{hyp-iborne} hold and there exist $i_0\in F$ and
$t_0 \geq0$ such that the sublevel sets of $V$ are small for
$P^{(i_0)}_t$, for every $t\geq t_0$. If
\[
\sum_{n = 0}^{\bar{n}} \nu(n) \alpha(n) > 0,
\]
where $(\alpha(n))_{n\geq0}$ is an increasing sequence verifying
$\alpha(n)\leq\inf_{i \in F_n} \lambda(i)$,
then there exist a probability measure $\boldsymbol{\pi} $ and two
constants $C, \lambda>0$ such that
\[
\forall t \geq0,\qquad d_\TV (\delta_{\mathbf{x}}
\mathbf{P}_t, \boldsymbol{\pi} ) \leq C \mathrm{e}^{-\lambda t}
\bigl(1+V(x)\bigr)
\]
for every $\mathbf{x}=(x,i) \in\mathbf{E}$.\vadjust{\goodbreak}
\end{theo}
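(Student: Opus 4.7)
The plan is to check the two hypotheses of the classical Harris theorem (Theorem~\ref{th:Harris}) for the semigroup $\mathbf{P}$: existence of a Lyapunov function on $\mathbf{E}$, and smallness of its sublevel sets. The structure parallels the constant-rate argument (Lemma~\ref{lem:Lyap-cst} and the proof of Theorem~\ref{th:dtv-cst}), except that the autonomous chain $I$ of the constant case is replaced everywhere by the dominating autonomous birth--death chain $L$ from Assumption~\ref{hyp-iborne}.

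\textit{Step 1 (Lyapunov function for $\mathbf{P}$).} Following Lemma~\ref{lem:Lyap-cst}, Jensen's inequality and iteration over the jump times $(T_n)$ of $I$ give
\[
\mathbf{P}_t V^q(x,i) \leq \E\bigl[e^{-q\int_0^t \lambda(I_s)\,ds}\bigr]\, V^q(x) + K \sum_{n\geq 0} \E\bigl[e^{-q\int_0^{T_n} \lambda(I_s)\,ds}\bigr].
\]
Since $I$ is no longer autonomous, Lemma~\ref{lem:Per-Frob} cannot be applied to it directly. The idea is to couple $I$ and $L$ on a common probability space so that $L_s \leq n_{I_s}$ almost surely for every $s \geq 0$: the definitions of $b(n)$ and $d(n)$ as a uniform infimum and supremum over the first component are exactly what makes this dominating coupling possible via a Poisson random measure construction (at every potential upward jump of $L$ the corresponding rate is below the true upward rate of $n_I$, and symmetrically for downward jumps). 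Monotonicity of $\alpha$ combined with $\alpha(n) \leq \lambda(i)$ for $i \in F_n$ then yields $\alpha(L_s) \leq \lambda(I_s)$ pathwise. Lemma~\ref{lem:Per-Frob} applied to the autonomous, positive recurrent chain $L$ with $\sum \nu(n)\alpha(n) > 0$ produces $q \in (0,1]$ and constants $C,\eta>0$ with $\E[e^{-q\int_0^t \alpha(L_s)\,ds}] \leq C e^{-\eta t}$, and summability of the series in $n$ follows from $\bar a < \infty$ and a linear-in-$n$ lower bound on $T_n$, exactly as in Lemma~\ref{lem:Lyap-cst}. This gives $V^q$ as a Lyapunov function for $\mathbf{P}$.

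\textit{Step 2 (smallness of sublevel sets).} Fix $C>0$ and let $V(x), V(y) \leq C$. Step~1 combined with Markov's inequality yields $t_*^{(1)}$ and $K'$ such that $\p(V(X_{t_*^{(1)}}) \leq 2K') \geq 1/2$ uniformly over the sublevel set. I then reuse the coupling from the proof of Theorem~\ref{th:dtv-cst}: evolve $\mathbf{X}$ and $\mathbf{Y}$ independently up to $t_*^{(1)}$; by Assumption~\ref{hyp:rec} and a Poisson-thinning realisation driven by the uniform lower-rate chain $\underline a$ (whose irreducibility and recurrence give bounds independent of the first component), one finds $t_* \geq t_*^{(1)}$, $t_0>0$ and $\delta>0$ with
\[
\p\bigl(I_s = J_s = i_0 \text{ for all } s \in [t_*, t_*+t_0]\bigr) \geq \delta.
\]
On the intersection with $\{V(X_{t_*}),V(Y_{t_*}) \leq 2K'\}$, the processes $X$ and $Y$ evolve on $[t_*,t_*+t_0]$ as $P^{(i_0)}$ conditioned on no switching, and the small-set assumption on $P^{(i_0)}_{t_0}$ furnishes a coupling that makes them coincide at time $t_*+t_0$ with probability at least $\varepsilon>0$. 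This establishes smallness of $\{V \leq C\}$ for $\mathbf{P}_{t_*+t_0}$, and Harris' theorem concludes.

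The technical core is Step~1: producing the pathwise domination $L_s \leq n_{I_s}$ despite the non-autonomous nature of $I$. The nearest-level partition structure and the uniform-in-$x$ definitions of $b(n)$ and $d(n)$ in Assumption~\ref{hyp-iborne} are precisely what makes such a coupling possible; without that structure one would have to compare $I$ to a genuine Markov chain on all of $F$, losing both the birth--death reduction and the direct applicability of Lemma~\ref{lem:Per-Frob}.
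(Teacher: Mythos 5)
Your proposal follows essentially the same route the paper intends: the paper omits the proof of Theorem~\ref{th:dtv-noncst2} precisely because it is the proof of Theorem~\ref{th:dtv-cst} combined with the Lyapunov-function construction of Lemma~\ref{lem:Lyap-noncst}, which is exactly your Step~1 (domination $\alpha(L_s)\le\lambda(I_s)$ via the coupled birth--death chain $L$, then Lemma~\ref{lem:Per-Frob} applied to $L$) followed by the small-set argument of Theorem~\ref{th:dtv-cst} with the autonomous-$I$ irreducibility replaced by the uniform-rate argument from Assumption~\ref{hyp:rec}. The only point worth tightening is that the pathwise domination $L_s\le n_{I_s}$ for all $s\ge0$ requires choosing the initial condition $L_0\le n_{I_0}$ (the paper's coupling otherwise lets $L$ and $n_I$ run independently until they meet), but this is the natural choice and the argument then goes through exactly as you describe.
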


We do not give the proofs of Theorem~\ref{th:dtv-noncst} and Theorem~\ref{th:dtv-noncst2},
as their proofs are very similar to the proof of Theorem~\ref
{th:dtv-cst}, combined
with the argument of Lemma~\ref{lem:Lyap-noncst} below.
To prove Theorem~\ref{th:W-noncst2} however, we cannot use classical
Harris' Theorem.
Its proof follows the same idea as the proof of Theorem~\ref
{th:W-cst}, but there is
no direct equivalent to the meeting time. Instead, we use a weak
version of Harris' Theorem which
yields geometric ergodicity under the existence of a Lyapunov function
and a modified
``small set'' condition. This theorem was previously applied to the
stochastic Navier--Stokes
equation \cite{HM09}, stochastic delay differential equations \cite
{HMS}, and linear
response theory \cite{HM10}. It is an extension of the classic Harris'
Theorem which
allows to deal with some degenerate
examples like the one given in \eqref{eq:exemple-intro}.

%s3.1 #&#
\subsection{Weak form of Harris' Theorem}

As already mentioned earlier, there are situations in which we cannot expect
convergence in total variation. The problem here is that bounded sets
may not be small sets.
We will therefore replace the notion of small set by the following
notion of ``closedness'' between
transition probabilities introduced in \cite{HMS}, which takes into
account the topology of the
underlying space $X$.
%

%de3.5 #&#
\begin{defi}[($d$-small set)]
Let $P$ be a Markov operator over a Polish space $X$ endowed with a
distance $d_X \dvtx X \times X \mapsto[0,1]$. A set $A\subset X$ is
said to be $d_X$-small if there exists a constant $\varepsilon$ such that
\[
\mathcal{W}_{d_X} (\delta_x P,\delta_y P)
\leq1-\varepsilon,
\]
for every $x, y \in A$.
\end{defi}

This notion is a generalisation of the notion of small set, since small
sets are $d$-small for the trivial distance.
This definition can also be extended to situations when $d$ is not a
distance \cite{HMS}.
As remarked in that paper,
having a Lyapunov function $V$ with $d$-small sublevel sets cannot be
sufficient to imply the ergodicity of a Markov semigroup. To obtain
some convergence result, we further impose that $d$
is contracting for our semigroup:
%

%de3.6 #&#
\begin{defi}[($d$-contracting operator)]
Let $P$ be a Markov operator over a Polish space $X$ endowed with a
distance $d_X\dvtx X \times X \mapsto[0,1]$. The distance $d_X$ is
said to be contracting for $P$ if there exists $\alpha< 1$ such that
the bound
\[
\mathcal{W}_{d_X}(\delta_x P, \delta_y P)
\leq\alpha d_X (x, y)
\]
holds for every $x, y \in X$ verifying $d(x, y) < 1$.
\end{defi}

Note that this condition alone is not sufficient to guarantee the
convergence of
transition probabilities toward a unique invariant measure since we
only impose a contraction
when $d(x, y) < 1$. In typical situations, ``most'' pairs $(x,y)$ may
satisfy $d(x,y) = 1$,
as would be the case for the total variation distance.
However, when combined with the existence of a Lyapunov function $V$
that has $d$-small sublevel sets, it gives geometrical ergodicity
(\cite{HMS}, Theorem~4.7):

%
%th3.7 #&#
\begin{theo}[(Weak form of Harris' Theorem)]
\label{th:Weak-Harris}
Let $(P_t)_{t\geq0}$ be a Markov semigroup over a Polish space $X$
admitting a continuous Lyapunov
function $V$. Assume furthermore that there exist $t^*> t_* > 0$ and a
distance $d_X \dvtx X \times X \mapsto[0,1]$
which is contracting for $P_t$ and such that the sublevel set $\{ x \in
X | V(x) \leq4 K_V \}$ is $d_X$-small for $P_t$, for every $t\in
[t_*,t^*]$. Here $K_V$ is as in Definition~\ref{def:lyap}. Then,
$(P_t)_{t\geq0}$ has an invariant probability measure $\pi$.
Furthermore, defining
\[
\delta_X (x, y) = \sqrt{ d_X(x, y) \bigl(1 + V (x) + V
(y)\bigr)},
\]
there exist $r>0$ and $t_0 > 0$ such that
\[
\forall t \geq t_0,\qquad\mathcal{W}_{\delta_X}(\mu
P_t, \nu P_t) \leq \mathrm{e}^{- rt}
\mathcal{W}_{\delta_X}(\mu, \nu),
\]
for all of probability measures $\mu, \nu$ on $X$.
\end{theo}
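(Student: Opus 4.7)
The plan is to reduce everything to a one-step strict contraction: find $T > 0$ (in the allowed window $[t_*, t^*]$) and $\alpha \in (0,1)$ such that
\[
\mathcal{W}_{\delta_X}(\mu P_T, \nu P_T) \leq \alpha \, \mathcal{W}_{\delta_X}(\mu, \nu)
\]
for every pair of probability measures $\mu, \nu$ on $X$. Once this is established, iterating via the semigroup property gives the required exponential contraction for all $t \geq t_0$ (with $t_0$ large enough to absorb the multiplicative slack coming from non-integer multiples of $T$). The invariant measure $\pi$ arises as the Cauchy limit of $\delta_{x_0} P_{nT}$ in the space of probability measures with finite $V$-moment, which is complete with respect to $\mathcal{W}_{\delta_X}$.

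By the infimum characterisation of Wasserstein-type distances, the one-step bound reduces to constructing, for each pair $(x, y) \in X \times X$, a coupling $(X_T, Y_T)$ of $\delta_x P_T$ and $\delta_y P_T$ satisfying $\E[\delta_X(X_T, Y_T)] \leq \alpha \, \delta_X(x, y)$. The working tool is the Cauchy--Schwarz bound
\[
\E[\delta_X(X_T, Y_T)]^2 \leq \E[d_X(X_T, Y_T)] \cdot \E[1 + V(X_T) + V(Y_T)],
\]
which allows one to control the $d_X$-factor and the Lyapunov factor separately. The Lyapunov bound yields $\E[1 + V(X_T) + V(Y_T)] \leq 1 + 2 K_V + C_V e^{-\gamma T}(V(x) + V(y))$, and the exponential term is small for $T$ large.

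The analysis then splits into regions. If $V(x) + V(y)$ is sufficiently large relative to $K_V$, the Lyapunov factor alone supplies contraction, since $1 + 2 K_V + C_V e^{-\gamma T}(V(x) + V(y)) \leq \beta (1 + V(x) + V(y))$ with some $\beta < 1$; combined with the trivial $d_X \leq 1$ this already yields the desired bound. Otherwise both $V(x)$ and $V(y)$ are bounded by a multiple of $K_V$, and contraction has to come from the $d_X$-factor: if $d_X(x, y) < 1$ we use the $d_X$-contractivity of $P_T$ to produce a coupling with $\E[d_X(X_T, Y_T)] \leq \alpha_* d_X(x, y)$, while if $d_X(x, y) = 1$ we invoke the $d_X$-smallness of the sublevel set $\{V \leq 4K_V\}$ to obtain a coupling with $\E[d_X(X_T, Y_T)] \leq 1 - \varepsilon$. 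In each case Cauchy--Schwarz then yields $\E[\delta_X(X_T, Y_T)]^2 \leq \alpha^2 \delta_X(x, y)^2$.

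The main obstacle is to calibrate a single $\alpha < 1$ uniformly across the three regimes. This forces $T$ to be chosen large enough that $C_V e^{-\gamma T}$ is small compared to $\varepsilon$, and it may require rescaling the Lyapunov function by a small constant (an operation which preserves the Lyapunov inequality but with a smaller $K_V$) so that the absolute bound arising from $d_X$-smallness dominates the lower bound $\delta_X(x, y)^2 \geq 1$ that holds whenever $d_X(x, y) = 1$. A further technical subtlety is that $\delta_X$ is only a semi-metric --- the triangle inequality fails --- so the argument must be formulated entirely through explicit couplings, rather than through a Kantorovich--Rubinstein dual or direct metric manipulations on $\delta_X$.
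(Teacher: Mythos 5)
The paper does not prove this theorem at all: it is quoted verbatim as \cite[Theorem~4.7]{HMS}, so there is no internal proof to compare against. Your sketch is, in substance, the standard proof from that reference (see also the discrete-time version, Theorem~4.8 there): establish a one-step strict contraction of $\mathcal{W}_{\delta_X}$ under $P_T$ by a Cauchy--Schwarz splitting of $\delta_X^2$ into the $d_X$-factor and the Lyapunov factor, treat separately the regimes ``$V(x)+V(y)$ large'' (Lyapunov contraction beats the trivial bound $d_X\le 1$), ``$d_X(x,y)=1$ with both points in $\{V\le 4K_V\}$'' ($d_X$-smallness), and ``$d_X(x,y)<1$'' ($d_X$-contractivity), calibrate the constants by inserting a small parameter in front of $V$, and conclude by iteration and completeness. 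All of this is correct, and you rightly flag the two genuine subtleties (uniform calibration of $\alpha$ across regimes, and the failure of the triangle inequality for $\delta_X$, which forces a coupling-based formulation together with a measurable selection of near-optimal couplings when passing from point masses to general $\mu,\nu$).

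One point you pass over a little quickly is the step from multiples of $T$ to all $t\ge t_0$. Nothing in the hypotheses controls how much $P_s$ can expand $\delta_X$ for a \emph{small} residual time $s$, so ``absorbing the slack'' is not automatic if you fix a single $T$. This is precisely why the smallness and contractivity are assumed on a whole window $[t_*,t^*]$ with $t^*>t_*$: for $t$ large one writes $t=nT_n$ with a step size $T_n=t/n\in[t_*,t^*]$, so that every factor in the iteration is a genuine contraction and no interpolation for short times is needed. With that adjustment your argument closes.
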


%
%re3.8 #&#
\begin{Rq}[(On the contracting distances)]
The main difficulty when applying the previous theorem is to find a
contracting distance. The construction of
this distance represents the main part of our paper. In \cite{HM10},
there is a general way to build a
contracting distance of a Markov operator $P$ over a Banach space
$(\mathbb{B}, \Vert\cdot\Vert)$,
based on a gradient estimate for $P$ and the existence of a
super-Lyapunov function.
This technique was efficient in \cite{HM10,HM09}.
\end{Rq}

%s3.2 #&#
\subsection{Construction of a Lyapunov function}

As in the constant case, we first show that if each underlying Markov process
verifies a weaker form of the drift condition \eqref{eq:CD} then
$\mathbf{X}$
possesses a Lyapunov function:

%
%le3.9 #&#
\begin{lem}[(Construction of a Lyapunov function)]
\label{lem:Lyap-noncst}
Suppose that Assumptions~\ref{hyp:a-reg}, \ref{hyp:rec}, \ref
{hyp:Lyap} and~\ref{hyp-iborne} hold, if
\[
\sum_{n\geq0} \nu(n) \alpha(n) > 0,
\]
where $(\alpha(n))_{n\geq0}$ is an increasing sequence verifying
$\alpha(n)\leq\inf_{i \in F_n} \lambda(i)$,
then there exist $C_V, K_V, \lambda_V > 0$ and $q\in(0,1)$ such that,
for all $t \ge0$ and all $(x,i) \in\mathbf{E}$, the bound
%
%
%e3.2 #&#
\begin{equation}
\label{e:wantedLyap} \mathbf{P}_t V^{q} (x,i) \leq
C_V \mathrm{e}^{- \lambda_V t} V^{q} (x) +
K_V
\end{equation}
holds.
\end{lem}
\begin{pf}
Recall again that Jensen's inequality gives this weaker form of \eqref
{eq:Lyap-underlying}:
\[
\bigl(P^{(i)}_t V^{q} \bigr) (x) \leq
\mathrm{e}^{- q \alpha(i) t} V^{q}(x) + K^{q},
\]
for every $x\in E$ and $q\in(0,1]$.
Note also that, as a consequence of the Markov property,
\eref{e:wantedLyap} follows if we are able to find \textit{some}
$T > 0$ and constants $C, K>0$ and $q \in(0,1]$ such that
%
%
%e3.3 #&#
\begin{equation}
\label{e:wantedLyap2} \mathbf{P}_T V^{q} (x,i) \leq
\tfrac{1}{2} V^{q} (x) + K,
\end{equation}
and such that\vspace*{1pt}
%
%
%e3.4 #&#
\begin{equation}
\label{e:wantedLyap3} \mathbf{P}_t V^{q} (x,i) \leq C
V^{q} (x) + K,
\end{equation}
for all $t \in[0,T]$. In order to find such a time $T$, we will build
a process which couples a copy of $\mathbf{X}$ with the birth and death
process $L$ of Assumption~\ref{hyp-iborne}.
We define a generator $\mathcal{G}$ on $\mathbf{E}\times\{0,\ldots
,\bar n\}$ by\vspace*{1pt}
\begin{eqnarray*}
\mathcal{G} f(x,i,l) &=& \mathcal{L}^{(i)} f(x,i,l) + \sum
_{j \in F} a(x,i,j) \bigl(f(x,j,l)-f(x,i,l) \bigr)
\\[1pt]
&&{} + b(l) \bigl(f(x,i,l+1)-f(x,i,l) \bigr) + d(l) \bigl(f(x,i,l-1)-f(x,i,l)
\bigr)
\end{eqnarray*}
for $l\neq n_i$. For $l = n_i$ on the other hand, we set\vspace*{1pt}
\begin{eqnarray*}
\mathcal{G} f(x,i,l) &=& \mathcal{L}^{(i)} f(x,i,l) + \sum
_{j \in F_{l -1}} a(x,i,j) \bigl(f(x,j,l-1)-f(x,j,l) \bigr)
\\[1pt]
&&{} + \biggl(d(l) - \sum_{j \in F_{l -1}} a(x,i,j) \biggr)
\bigl(f(x,i,l-1)-f(x,j,l) \bigr)
\\[1pt]
&&{} + \sum_{j \in F_{l}} a(x,i,j) \bigl( f(x,j,l) -
f(x,i,l) \bigr)
\\[1pt]
&&{} + \frac{b(l)}{\sum_{k \in F_{l +1}} a(x,i,k)} \sum_{j \in F_{l
+1}} a(x,i,j)
\bigl( f(x,j,l+1) -f(x,j,l) \bigr)
\\[1pt]
&&{} + \frac{\sum_{k \in F_{l +1}} a(x,i,k) - b(l)}{\sum_{k \in F_{l
+1}} a(x,i,k)} \sum_{j \in F_{l +1}} a(x,i,j)
\bigl(f(x,j,l)-f(x,j,l) \bigr).
\end{eqnarray*}
In words, as long as $L\neq n_I$, $L$ and $\mathbf{X}$ move
independently from each other until the time where $n_I$ and $L$ agree.
After that time,
the coupling is designed in such a way that one always has $n_I \ge L$.
If we start the process with an initial condition $(x,i,l)$ such that
$n_i \ge l$, this construction ensures in particular that,
for all times, one has\vspace*{1pt}
\[
\alpha(I_t) \geq\alpha(L_t).
\]
We now denote by $\{\tau_n\}_{n \ge1}$ the times at which the process
$I_t$ jumps
and by $N_t$ the number of such jumps before time $t$.

With these notations at hand, we then have\vspace*{1pt}
%
%
%e3.5 #&#
\begin{eqnarray}
\label{e:goodBound} \mathbf{P}_t V^{q} (x) &=& \E \bigl[
P^{(I_{\tau_{N_t}})}_{t -\tau_{N_t}} V^{q} (X_{\tau
_{N_t}}) \bigr] \leq
\E \bigl[ \mathrm{e}^{-\alpha(I_{\tau_{N_t}}) (t -\tau
_{N_t})} V^{q} (X_{\tau_{N_t}}) +
K^q \bigr]
\nonumber
\\
&\le&\E \bigl[ \mathrm{e}^{-\int_{\tau_{N_t}}^t \alpha(L_{s}) \,
\mathrm{d}s} V^{q} (X_{\tau_{N_t}})
\bigr] + K^q
\\
&=& \E \bigl[\mathrm{e}^{-\int_{\tau_{N_t}}^t \alpha(L_{s}) \,
\mathrm{d}s} P^{(I_{\tau
_{N_t-1}})}_{\tau_{N_t} -\tau_{N_t-1}}
V^{q} (X_{\tau
_{N_t-1}}) \bigr] + K^q
\nonumber
\\
&\leq&\cdots\le\E \bigl[ \mathrm{e}^{-\int_0^t q \alpha(L_s) \,
\mathrm{d}s } \bigr] V^q(x) +
K^q \E \biggl[ \sum_{n \le N_t}
\mathrm{e}^{-q \int_{\tau_n}^t
\alpha(L_s) \,\mathrm{d}s} \biggr].
\nonumber
\end{eqnarray}
Now, using Lemma~\ref{lem:Per-Frob}, there exist $C,\eta>0$ and $q\in
(0,1]$ such that
%
%
%e3.6 #&#
\begin{equation}
\label{eq:lambda-eta} \E \bigl[ \mathrm{e}^{-\int_0^t q \alpha(L_s) \,\mathrm{d}s
} \bigr] \leq C
\mathrm{e}^{-\eta t}.
\end{equation}
Hence, in view of \eref{e:wantedLyap2} and \eref{e:wantedLyap3},
it only remains to prove that, for any fixed time $T$, one has the bound
\[
\sup_{t \le T} \E \biggl[ \sum_{n \le N_t}
\mathrm{e}^{-q \int
_{\tau_n}^t
\alpha(L_s) \,\mathrm{d}s} \biggr] < + \infty.
\]
Since the function $\alpha$ is bounded from below and the function $t
\mapsto N_t$ is
increasing, this boils down to the bound $\E N_T < \infty$, which is a simple
consequence of the fact that by Assumption~\ref{hyp:a-reg}
the jump rates are also bounded from above.
\end{pf}

%s3.3 #&#
\subsection{The contracting distance}

This section is divided in three parts. We introduce the distance
$\widetilde{d}$ that we will use in Theorem~\ref{th:Weak-Harris}, we
build our coupling in such a way that $\widetilde{d}$ will
be contracting for it, and we finally prove that it is indeed contracting.

%s3.3.1 #&#
\subsubsection{Definition of \texorpdfstring{$\widetilde{d}$}{widetilded}}

Here, we build a distance $\widetilde{d}\dvtx (E\times F) \times
(E\times F) \rightarrow[0,1]$ such that there exist $t_*>0$ and
$\alpha\in(0,1)$ verifying
%
%
%e3.7 #&#
\begin{equation}
\label{eq:contraction} \widetilde{d}(\mathbf{x},\mathbf{y})<1 \quad\Rightarrow\quad
\forall t\geq t_*, \qquad\mathcal{W}_{\tilde{d}}(\delta_\mathbf{x}
P_t, \delta _\mathbf {y} P_t) \leq\alpha
\widetilde{d}(\mathbf{x},\mathbf{y}).
\end{equation}
where $\mathbf{x}=(x,i)$ and $\mathbf{y}=(y,j)$ belong to $E\times
F$. Since we can say nothing when $i\neq j$, we will take $\widetilde
{d}(\mathbf{x},\mathbf{y})$ constant equal to $1$ in this case. When
$i=j$ we want to use Assumption~\ref{hypo:Curv} to prove a decay. But
it is more useful to ``decrease the contraction'' of the underlying
Markov semigroup. More precisely, by Jensen inequality, Assumption~\ref
{hypo:Curv} gives
\[
\mathcal{W}_{d^q}\bigl(\mu P^{(i)}_t, \nu
P^{(i)}_t\bigr) \leq\mathrm{e}^{-q
\rho(i) t}
\mathcal{W}_{d^q}(\mu,\nu),
\]
for all $t\geq0 $, $q\in(0,1]$ and every probability measures $\mu
,\nu$. Finally, we define $\widetilde{d}$ by
\[
\widetilde{d}(\mathbf{x},\mathbf{y}) = \one_{i\neq j} +
\one_{i=j} \bigl( \delta^{-1} d^q(x,y) \wedge1
\bigr),
\]
where $\delta>0$ will be determined later. Now, if a realisation of
the coupling $(\mathbf{X}_t,\mathbf{Y}_t)_{t\geq0} =
((X_t,I_t),(Y_t, J_t))_{t\geq0}$ starting from $(\mathbf{x},\mathbf
{y})$, verifies $\widetilde{d}(\mathbf{x},\mathbf{y})<1$, then
$I_0=J_0=i=j$. So, we will try to build our coupling in
such a way that $I$ and $J$ remain equal for as long as possible. More
precisely, if we set
%
%
%e3.8 #&#
\begin{equation}
\label{eq:def-T} T = \inf\{ s \geq0 | I_s \neq J_s\},
\end{equation}
then we will prove that there exists $K>0$ and a choice of coupling
such that
\[
\p(T < \infty) \leq K d(x,y).
\]

%s3.3.2 #&#
\subsubsection{Construction of our coupling}

Here, we fix $\mathbf{x}=(x,i)$, $\mathbf{y}=(y,j)$ in $\mathbf{E}$
and we let $t>0$. Let $r\geq0$ and $(N_t)_{t\geq0}$ be a Poisson
process of intensity $r$ with $N_t = \sum_{n\geq0} \one_{\{\tau_n
\leq
t\}}$
and $\tau_n= \sum_{k=1}^n E_k$ for a family $(E_k)_{k \geq0}$ of
i.i.d. exponential variables and
$\tau_0=0$. We assume that $r\geq2 \bar{a}$, that
is $r$ is larger than the jump rates of $I$ or $J$. As in the proof of
Theorem~\ref{th:W-cst}, we give the construction of our coupling
$(\mathbf{X}, \mathbf{Y})$ at the jump
times of $N$. Let $n\in\{ 0,\ldots, N_t\}$, we consider the following
dynamics:
\begin{itemize}
\item If $I_{\tau_n} \neq J_{\tau_n}$, then $X_{s}$ and $Y_{s}$
evolve independently for every $s\in[\tau_n, \tau_{n+1} \wedge t)$.
\item If $I_{\tau_n} = J_{\tau_n}$, then by Assumption~\ref
{hypo:Curv}, we can couple $X$ and $Y$ in such a way that
\[
\E \bigl[ d(X_{\tau_{n+1} \wedge t},Y_{\tau_{n+1} \wedge t}) | \mathcal{G}_{\tau_n}
\bigr] \leq\mathrm{e}^{-\rho(I_{\tau
_n})(\tau
_{n+1}\wedge t - \tau_n)} d(X_{\tau_n},Y_{\tau_n}),
\]
where $\mathcal{G}_n =\sigma\{ (\mathbf{X}_{\tau_n},\mathbf
{Y}_{\tau_n}), (\tau_k)_{k\geq0} \}$.
\end{itemize}
At the jump times of $N$ the situation is different since $I$ or $J$
may jump. We will optimise the chance that $I$ and $J$ jump
simultaneously. For each $n\in\N^*$, we cut $[0,1]$ in four parts
$I^n_0, I^n_1, I^n_2, I^n_3$ in such a way that
\begin{eqnarray*}
\lambda\bigl(I^n_0\bigr)&=& \frac{1}{r} \sum
_{j \in F} \bigl( a(X_{\tau_n -}, I_{\tau_n},
j) - a (Y_{\tau_n -}, I_{\tau_n}, j) \bigr)_+,
\\
\lambda\bigl(I^n_1\bigr)&=& \frac{1}{r} \sum
_{j \in F} \bigl( a(Y_{\tau_n -}, I_{\tau_n},
j) - a (X_{\tau_n -}, I_{\tau_n}, j) \bigr)_+,
\\
\lambda\bigl(I^n_2\bigr)&=& \frac{1}{r} \sum
_{j\in F} a(X_{\tau_n -}, I_{\tau
_n}, j)
\wedge a(Y_{\tau_n -}, I_{\tau_n}, j),
\\
\lambda\bigl(I^n_3\bigr)&=& 1 - \frac{1}{r} \sum
_{j\in F} a(X_{\tau_n -}, I_{\tau_n}, j)
\vee\sum_{j\in F} a(Y_{\tau_n -},
I_{\tau_n}, j),
\end{eqnarray*}
where $\lambda$ is the Lebesgue measure and $(x)_+= \max(x,0)$. Let
$(U_n)_{n\geq0}$ be a sequence of i.i.d. random variables uniformly
distributed on $[0,1]$, we couple $I$ and $J$ at the jump times as follows:
\begin{itemize}
\item For $U_n \in I^n_0$, $I$ jumps, but $J$ does not jump.
\item For $U_n \in I^n_1$, $J$ jumps, but $I$ does not jump.
\item For $U_n \in I^n_2$, $I$ and $J$ both jump simultaneously to the
same location.
\item For $U_n \in I^n_3$, $I$ and $J$ both stay in place.
\end{itemize}
The second components, $X$ and $Y$, do not jump. Finally, we also
couple $\mathbf{X}$ and $\mathbf{Y}$ with a continuous Markov chain
$L$ which only depend to $U$ and $N$ and which verifies
\[
\forall t\geq0,\qquad\rho(I_t) \geq\alpha(L_t).
\]
This Markov chain $L$ is constructed as in the proof of Lemma~\ref
{lem:Lyap-noncst}.

%
%re3.10 #&#
\begin{Rq}
This coupling is not quite Markovian since, between times $\tau_n$ and
$\tau_{n+1}$, it already
uses information about the pair $(X_t, Y_t)$ at time $\tau_{n+1}$.
However, in many situations to which our results apply there exists a
Markovian coupling
with generator $\mathbb{L}^{(i)}$
which yields a good coupling for each of the underlying processes.
In this case, we can make our coupling Markovian with generator
\begin{eqnarray*}
\mathbb{L} f(\mathbf{x},\mathbf{y},n) &=& \mathbb{L}^{(i)} f(
\mathbf{x},\mathbf{y},n) + \sum_{k \in F} \bigl( a(x,i,k)
- a (y,j,k) \bigr)_+ f\bigl((x,k),\mathbf{y},n+1\bigr)
\\
&&{} + \sum_{k \in F} \bigl( a(y,j,k) - a (x,i,k)
\bigr)_+ f\bigl(\mathbf {x},(y,k),n+1\bigr)
\\
&&{} + \sum_{k \in F} a(x,i,k) \wedge a(y,j,k) f
\bigl((x,k),(y,k),n+1\bigr)
\\
&&{} + \biggl(r - \sum_{k \in F} a(x,i,k) \vee a(y,j,k)
\biggr)f(\mathbf {x},\mathbf{y},n+1) - r f(\mathbf{x},\mathbf{y},n).
\end{eqnarray*}
\end{Rq}

%s3.3.3 #&#
\subsubsection{The distance \texorpdfstring{$\widetilde{d}$}{widetilded} is contracting for
\texorpdfstring{$\mathbf{P}$}{P}}

In this subsection, we show that the distance $\widetilde d$ defined above
is indeed contracting for the coupling constructed in the previous subsection.
This is formulated in the following result.

%
%le3.11 #&#
\begin{lem}
\label{lem:d-contract}
Let $(\mathbf{X}_t, \mathbf{Y}_t)_{t\geq0}$ be the coupling of the
previous section. Under the assumptions of Theorem~\ref{th:W-noncst2},
we can choose $r$ and $\delta$ in such a way that
\[
\forall t\geq t_*, \qquad\E \bigl[ \widetilde{d} (\mathbf{X}_t,
\mathbf {Y}_t) \bigr] \leq\gamma\widetilde{d}(\mathbf{x},\mathbf{y}),
\]
for some $\gamma\in(0,1)$ and $t_*>0$, and all $\mathbf{x},\mathbf
{y} \in E\times F$ verifying $\widetilde d(\mathbf{x},\mathbf{y})<1$.
\end{lem}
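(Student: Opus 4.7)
The plan is to decompose $\E[\widetilde{d}(\mathbf{X}_t,\mathbf{Y}_t)]$ according to the first split time $T$ of the discrete components defined in \eqref{eq:def-T}. Since the assumption $\widetilde{d}(\mathbf{x},\mathbf{y})<1$ forces $i=j$ and $d^q(x,y)<\delta$, we write
\begin{equation*}
\E\bigl[\widetilde{d}(\mathbf{X}_t,\mathbf{Y}_t)\bigr] \le \delta^{-1}\E\bigl[d^q(X_t,Y_t)\1_{\{T>t\}}\bigr] + \p(T\le t),
\end{equation*}
using $\widetilde{d}\le 1$ on $\{T\le t\}$, and treat each piece separately.

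On the event $\{T>t\}$, the pair $(X,Y)$ moves, on each interval between Poisson times, under the $d$-contracting coupling of Assumption~\ref{hypo:Curv}. Jensen's inequality upgrades this to a $d^q$-contraction at rate $q\rho(I_s)\ge q\alpha(L_s)$ (by the construction of $L$), so iterating over Poisson intervals and then invoking Lemma~\ref{lem:Per-Frob} applied to the birth-death chain $L$ with cost $\alpha$ gives
\begin{equation*}
\E\bigl[d^q(X_t,Y_t)\1_{\{T>t\}}\bigr] \le d^q(x,y)\,\E\bigl[e^{-q\int_0^t\alpha(L_s)\,ds}\bigr] \le C\,e^{-\eta t}\,d^q(x,y),
\end{equation*}
contributing at most $C\,e^{-\eta t}\,\widetilde{d}(\mathbf{x},\mathbf{y})$ to the right hand side.

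For $\p(T\le t)$, the construction of the coupling ensures that, conditionally on $\{T>\tau_n\}$ and $\mathcal{G}_{\tau_n}$, the probability that $I$ and $J$ separate at the next Poisson time $\tau_{n+1}$ is exactly $\lambda(I^n_0)+\lambda(I^n_1)\le (\kappa/r)\,d(X_{\tau_n-},Y_{\tau_n-})$ by the Lipschitz part of Assumption~\ref{hyp:a-reg}. The elementary inequality $\min(1,u)\le u^q$ valid for every $u\ge 0$ and $q\in(0,1]$ then also bounds this probability by $(\kappa/r)^q\,d^q(X_{\tau_n-},Y_{\tau_n-})$, which neatly avoids having to deal with $d$ directly. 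Summing over $n$ and applying the same $d^q$-contraction as before yields
\begin{equation*}
\p(T\le t) \le (\kappa/r)^q\,d^q(x,y)\sum_{n\ge 0}\E\bigl[e^{-q\int_0^{\tau_n}\alpha(L_s)\,ds}\bigr] \le C'\,(\kappa/r)^q\,\delta\,\widetilde{d}(\mathbf{x},\mathbf{y}),
\end{equation*}
where the finiteness of the series is precisely the estimate established at the end of the proof of Lemma~\ref{lem:Lyap-noncst}, valid provided $r$ is chosen sufficiently large.

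Putting the two bounds together gives $\E[\widetilde{d}(\mathbf{X}_t,\mathbf{Y}_t)]\le(Ce^{-\eta t}+C'(\kappa/r)^q\delta)\,\widetilde{d}(\mathbf{x},\mathbf{y})$, and it suffices to take first $r$ large enough for the series bound to hold and for $(\kappa/r)^q$ to be small, then $\delta$ small, and finally $t_*$ large, so that the prefactor is bounded by some $\gamma<1$. The main obstacle is the control of the tail sum $\sum_n\E[e^{-q\int_0^{\tau_n}\alpha(L_s)\,ds}]$: because $\alpha$ may take negative values the integrand is not trivially bounded by $1$, so one must reuse the delicate three-regime Poisson-time concentration argument of Lemma~\ref{lem:Lyap-noncst}, splitting according to whether $\tau_n$ lies close to, well below, or well above $n/r$. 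Everything else is essentially mechanical bookkeeping once this step is in hand.
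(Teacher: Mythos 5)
Your proposal is correct and follows essentially the same route as the paper: the same decomposition into the event that the discrete components never separate (handled by iterating the underlying Wasserstein contraction along the Poisson times, bounded via the comparison chain $L$ and Lemma~\ref{lem:Per-Frob}) versus the event of separation (whose probability is summed over the Poisson times and controlled via the Lipschitz bound on $a$ and the tail estimate from Lemma~\ref{lem:Lyap-noncst}), followed by tuning $r$, then $\delta$, then $t_*$. The only cosmetic differences are that you partition on $\{T>t\}$ vs $\{T\le t\}$ rather than $\{T=\infty\}$ vs $\{T<\infty\}$, and you invoke $\min(1,u)\le u^q$ explicitly where the paper raises a bounded quantity to the power $q$; neither changes the substance.
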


\begin{pf}
Recall that since $\widetilde d(\mathbf{x},\mathbf{y})<1$ one has
$I_0=J_0$ and that $T$, defined in \eqref{eq:def-T}, denotes the first
time of separation of $I$ and $J$. Using Lemma~\ref{lem:Per-Frob},
there exist $q\in(0,1]$ and $C, \eta>0$ such that
\begin{eqnarray*}
\E \bigl[ \widetilde{d}(\mathbf{X}_t,\mathbf{Y}_t)
\bigr] &\leq&\E \biggl[\one_{\{T= \infty\}}\frac{1}{\delta} d^q(X_t,Y_t)
+ \one _{\{T<+ \infty\}} \biggr]
\\
&\leq&\frac{1}{\delta} \E \bigl[\mathrm{e}^{-\int_0^t q\alpha
(L_s) \,\mathrm{d}s} \bigr]\E
\bigl[d^q(x,y)\bigr] + \p (T<+ \infty )
\\
&\leq& C \mathrm{e}^{- \eta t} \widetilde{d}(\mathbf{x},\mathbf {y}) + \p
(T<+ \infty ).
\end{eqnarray*}
Here, we have used the fact that
\begin{eqnarray*}
\E \bigl[\one_{\{T= \infty\}} d^q(X_t,Y_t)
\bigr] &\leq&\E \bigl[\one_{\{T\geq\tau_{N_t}\}} \mathrm{e}^{- q\alpha(
L_{\tau
_{N_t}}) (t- \tau_{N_t})}
d^q(X_{\tau_{N_t}},Y_{\tau_{N_t}}) \bigr]
\\
&\leq&\E \bigl[\one_{\{T\geq\tau_{N_t}\}} \mathrm{e}^{- q\alpha(
L_{\tau
_{N_t}}) (t- \tau_{N_t})} \E
\bigl[d^q(X_{\tau_{N_t}},Y_{\tau
_{N_t}}) | \mathcal{G}_n
\bigr] \bigr]
\\
&\leq&\E \bigl[\one_{\{T\geq\tau_{N_t-1}\}} \mathrm{e}^{- \int
_{\tau
_{N_t-1}}^t q \alpha( L_{s}) \,\mathrm{d}s}
d^q(X_{\tau
_{N_t-1}},Y_{\tau_{N_t
-1}}) \bigr]
\\
&\leq&\E \bigl[\mathrm{e}^{-\int_0^t q \alpha( L_{s}) \,\mathrm
{d}s} \bigr]\E \bigl[d^q(x,y)
\bigr].
\end{eqnarray*}
It remains to obtain a bound on $\p (T<+ \infty )$. Since
$I$ and $J$ can only
jump when $N$ jumps, $T$ can be finite only if it is one of the jump
times of $N$. So, we set
\[
A_n = \{T = \tau_n\} = \{ T\geq\tau_{n}
\mbox{ and } I_{\tau_n} \neq J_{\tau_n}\}.
\]
By Assumption~\ref{hyp:a-reg}, we have
\begin{eqnarray*}
\p ( A_n ) &=& \p \bigl(\bigl\{ U_n \in
I^n_0 \cup I^n_1 \cup
I^n_3\bigr\} \cap\{ T\geq \tau_{n} \} \bigr)
\\
&\leq&\E \biggl[\frac{2 \one_{\{T \geq\tau_n\}} \sum_{j\in F}
|a(X_{\tau_n -},I_{\tau_n -},j) - a(Y_{\tau_n -},I_{\tau_n
-},j)|}{r} \biggr]
\\
&\leq&\E \biggl[ \biggl(\frac{2 \one_{\{T \geq\tau_n\}} \sum_{j\in F}
|a(X_{\tau_n -},I_{\tau_n -},j) - a(Y_{\tau_n -},I_{\tau_n
-},j)|}{r} \biggr)^q \biggr]
\\
&\leq&\frac{2^q \kappa^q}{r^q} \E \bigl[ d(X_{\tau_n -},Y_{\tau_n
-}
)^q \bigr] \leq\frac{2^q \kappa^q}{r^q} \E \bigl[\mathrm{e}^{-q \int_0^{\tau_n}
\alpha(L_s) \,\mathrm{d}s}
\bigr] d(x,y)^q.
\end{eqnarray*}
Hence,
\[
\p ( T < \infty ) = \sum_{n\geq1} \p ( A_n
) \leq\frac{2^q \kappa^q}{r^q} d(x,y)^q \sum_{n\geq1}
\E \bigl[\mathrm{e}^{-q \int_0^{\tau_n} \alpha(L_s) \,\mathrm{d}s} \bigr].
\]
Now, similarly to the proof of Lemma~\ref{lem:Lyap-noncst}, provided
that $r$ is sufficiently large, there exist $C'>0$ and $\varepsilon>0$
verifying
\[
\sum_{n\geq1} \E \bigl[\mathrm{e}^{-q \int_0^{\tau_n} \alpha(L_s)
\,\mathrm{d}s}
\bigr] \leq\sum_{n\geq1} C' \mathrm
{e}^{-\varepsilon n} =: \tilde C < +\infty.
\]
Combining these bounds, we obtain the estimate
\[
\E \bigl[ \widetilde{d}(\mathbf{X}_t,\mathbf{Y}_t)
\bigr] \leq \biggl(C \mathrm{e}^{- \eta t} + \frac{(2 \kappa)^q \tilde
C}{r^q} \delta \biggr)
\widetilde{d}(\mathbf{x},\mathbf{y}).
\]
First making $\delta$ sufficiently small and then taking $t$
large enough, we thus obtain the announced result.
\end{pf}

%s3.4 #&#
\subsection{Bounded sets are \texorpdfstring{$\widetilde{d}$}{widetilded}-small}

%?

Here, we prove that if a set is bounded then it is $\widetilde{d}$-small.

%
%le3.12 #&#
\begin{lem}
\label{lem:d-small}
Under the assumptions of Theorem~\ref{th:W-noncst2}, if $S \subset
E\times F $ is of bounded
diameter in the sense that
\[
R=\sup\bigl\{ d(x,y) | \mathbf{x}, \mathbf{y} \in S \bigr\} < + \infty,
\]
then there exist $t_*, t^*>0$ such that $S$ is $\widetilde{d}$-small
for $P_t$, for all $t\in[t_*,t^*]$.
\end{lem}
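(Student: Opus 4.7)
The plan is, for $\mathbf{x}=(x,i),\mathbf{y}=(y,j) \in S$, to construct a coupling $(\mathbf{X}_t,\mathbf{Y}_t)$ of $\delta_\mathbf{x}\mathbf{P}_t$ and $\delta_\mathbf{y}\mathbf{P}_t$ with $\E[\widetilde d(\mathbf{X}_t,\mathbf{Y}_t)] \leq 1-\varepsilon$, uniformly over $\mathbf{x},\mathbf{y} \in S$ and $t \in [t_*, t^*]$. Since $\widetilde d \leq 1$, it suffices to exhibit an event of uniformly positive probability on which $I_t = J_t$ and $d(X_t,Y_t)^q < \delta/2$. The key input is that since $(\alpha(n))$ is increasing and $\sum_n \nu(n)\alpha(n) > 0$, there exists $n^*$ with $\alpha(n^*) > 0$, so any $i_0 \in F_{n^*}$ satisfies $\rho(i_0) \geq \alpha(n^*) > 0$.

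The coupling is built in two phases. On $[0,t_1]$, I run $\mathbf{X}$ and $\mathbf{Y}$ independently. By Assumption~\ref{hyp:rec} the minimum chain with rates $\underline a$ is irreducible, so a standard domination argument yields $t_1, p_1 > 0$ with $\p(I_{t_1} = J_{t_1} = i_0) \geq p_1$ uniformly in starting states. Because $S$ has bounded diameter, $V := V_{x_0}$ is bounded on $S$, and Lemma~\ref{lem:Lyap-noncst} (with Assumption~\ref{hyp:Lyap} verified via Lemma~\ref{lem:Curv-Lyap}) combined with Markov's inequality produces $M > 0$ such that the event $A_1 = \{I_{t_1}=J_{t_1}=i_0,\ V(X_{t_1})+V(Y_{t_1}) \leq M\}$ has probability at least $p_1/2$; on $A_1$ the triangle inequality gives $d(X_{t_1},Y_{t_1}) \leq M$. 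On $[t_1, t]$, conditionally on $A_1$, I couple so that (a) $I_s = J_s = i_0$ for all $s \in [t_1, t^*]$, which by the upper bound $\bar a$ of Assumption~\ref{hyp:a-reg} occurs with conditional probability at least $e^{-2\bar a (t^*-t_1)}$ via thinning, and (b) $X, Y$ evolve under the Wasserstein-contracting coupling of $P^{(i_0)}$ provided by Assumption~\ref{hypo:Curv}. On the resulting event $A_2$, Jensen's inequality and \eqref{eq:curv} yield $\E[d(X_t,Y_t)^q \mid A_2] \leq e^{-q\rho(i_0)(t-t_1)} M^q$; choosing $t_*$ so that $e^{-q\rho(i_0)(t_*-t_1)} M^q < \delta/4$, a further Markov inequality isolates a subevent $A_3 \subset A_2$ of uniform probability $\geq \varepsilon > 0$ on which $d(X_t,Y_t)^q < \delta/2$, hence $\widetilde d(\mathbf{X}_t, \mathbf{Y}_t) \leq 1/2$ for all $t \in [t_*, t^*]$.

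The main technical obstacle lies in the joint construction in step (a)--(b): one needs a single Markov process whose discrete component stays at $(i_0, i_0)$ throughout the interval while the continuous component follows the optimal Wasserstein coupling of $P^{(i_0)}$. The natural remedy is to dominate the jump clocks of $I, J$ by a common Poisson clock of rate $\bar a$ (as in the construction of Lemma~\ref{lem:Lyap-noncst}), run the $P^{(i_0)}$-coupling for $(X, Y)$ between ticks, and reject any tick whose proposal would move $I$ or $J$ away from $i_0$; the finiteness of $\bar a$ guarantees that the no-jump event has positive probability, and the thinning produces a genuine Markovian coupling with the advertised marginals.
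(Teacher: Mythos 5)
Your proof is correct and follows essentially the same strategy as the paper's: identify $i_0$ with $\rho(i_0)>0$ from the positivity of $\sum_n\nu(n)\alpha(n)$, run the two copies until their discrete components agree at $i_0$ and stay there over a time window (an event of uniformly positive probability by Assumptions~\ref{hyp:a-reg} and \ref{hyp:rec}), and on that event exploit the Wasserstein contraction of $P^{(i_0)}$.

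Where you genuinely diverge is in how the distance between the continuous components is controlled at the moment the chains first agree at $i_0$. You insert an explicit intermediate time $t_1$, run the two copies independently on $[0,t_1]$, and use the Lyapunov estimate of Lemma~\ref{lem:Lyap-noncst} together with Markov's inequality to restrict to an event on which $d(X_{t_1},Y_{t_1})\le M$. The paper instead asserts a factor $e^{\varrho c}$ bounding the growth of the distance over $[0,\tau_{\mathrm{in}}]$; but over that phase the two continuous components evolve under generally different generators $\mathcal L^{(I_s)}$, $\mathcal L^{(J_s)}$, so no Wasserstein-type coupling bound is available and a factor of the form $e^{\varrho c}d(x,y)$ is not justified (consider two flows pushing in opposite directions from the same initial point). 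Your Lyapunov/Markov detour closes this gap, at the mild cost of a two-phase coupling; this is the more careful treatment.

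Two small cautions. First, the phrase ``reject any tick whose proposal would move $I$ or $J$ away from $i_0$'' would alter the marginals if read literally. What you need, and what the thinning construction gives, is that under the true dynamics the event $A_2=\{I_s=J_s=i_0,\ s\in[t_1,t^*]\}$ has probability $\ge e^{-2\bar a(t^*-t_1)}$, and then that $\E[\1_{A_2}d^q(X_t,Y_t)]\le \E[d^q(\tilde X_t,\tilde Y_t)]$ where $(\tilde X,\tilde Y)$ is the unkilled $P^{(i_0)}$-coupling started from $(X_{t_1},Y_{t_1})$, since dropping the indicator only increases a nonnegative integrand. This yields the contraction estimate without conditioning on $A_2$, which is important because conditioning would bias the law of $X$ on $[t_1,t]$ through the $x$-dependence of the jump rates. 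Second, your conclusion $\widetilde d(\mathbf X_t,\mathbf Y_t)\le 1/2$ should be replaced by the weaker statement that $\E[\widetilde d(\mathbf X_t,\mathbf Y_t)]\le 1-\varepsilon$ for some $\varepsilon>0$ uniformly over $S$ and $t\in[t_*,t^*]$, which is exactly the definition of $\widetilde d$-small and is what the combined estimate on the good and bad events delivers.
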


\begin{pf}
Let $\mathbf{x}=(x,i)$ and $\mathbf{y}=(y,j)$ be two different points
of $S$. By
the assumptions of Theorem~\ref{th:W-noncst2}, there exists $i_0 \in
F$ such that $\rho(i_0)>0$.
Let $(\mathbf{X}_t)_{t\geq0}$ and $(\mathbf{Y}_t)_{t\geq0}$ be two
independent processes
generated by \eqref{eq:generateur} and starting respectively from
$\mathbf{x}$ and $\mathbf{y}$.
Let us denote
\[
\tau_{\mathrm{in}} = \inf \{ t\geq0 | I_t = J_t=
i_0 \} \quad\mbox{and}\quad\tau_{\mathrm{out}} = \inf \{ t\geq\tau
_{\mathrm{in}} | I_t \neq i_0 \mbox{ or }
J_t \neq i_0 \}.
\]
For every $b,c>0$ such that $b>c$, we define
\[
p_{c,b}(\mathbf{x},\mathbf{y})= \p (\tau_{\mathrm{in}}< c, \tau
_{\mathrm{out}} > b ).
\]
By Assumptions~\ref{hyp:a-reg} and~\ref{hyp:rec}, we have
$p_{c,b}(\mathbf{x},\mathbf{y}) > 0$.
Using the fact that $a$ is bounded, a~coupling argument shows that
$p_{c,b}$ is lower bounded by a positive quantity which only depends on
$i$ and~$j$.
We then obtain the bound
\begin{eqnarray*}
\E \bigl[\widetilde{d}(\mathbf{X}_t,\mathbf{Y}_t) \bigr]
&\leq&\E \bigl[ \one_{ \{ \tau_{\mathrm{in}}
< c, \tau_{\mathrm{out}} > b  \} } \widetilde{d}(\mathbf{X}_t,
\mathbf{Y}_t) \bigr] + 1-p_{c,b}(\mathbf{x},\mathbf{y})
\\
&\leq& 1 - p_{c,b}(\mathbf{x},\mathbf{y}) \bigl( 1-
\delta^{-1} \mathrm{e}^{\varrho c} \mathrm{e}^{- \rho(i_0) t} d(x,y)
\bigr)
\\
&\leq& 1 - p_{c,b}(\mathbf{x},\mathbf{y}) \bigl( 1-
\delta^{-1} \mathrm{e}^{\varrho c} \mathrm{e}^{- \rho(i_0) t} R
\bigr),
\end{eqnarray*}
where $\varrho$ is given by
\[
\varrho= - \min\bigl\{q \alpha(k) | k \in F\bigr\}.
\]
There exist $c>0$ and $t_*>c$ such that $ 1- \delta^{-1} \mathrm
{e}^{\varrho c}
\mathrm{e}^{- \rho(i_0) t_*} R >0$. Since $F$ is finite, we can
furthermore bound
$p_{c,b}$ from below by the minimum over all $i,j\in F$,
and the result follows for any $b>t_*$ and $t^* \in(t_*,b)$.
\end{pf}

%
%re3.13 #&#
\begin{Rq}
One can see from this proof that it is not necessary that the jump
rates are lower bounded, as in Assumption~\ref{hyp:rec}. Indeed, we
need that, for each $i,j \in F$, the jump times of $I$ are
stochastically smaller than a variable which does not depend of the
dynamics of $X$.
\end{Rq}

%s3.5 #&#
\subsection{Proofs of Theorem \texorpdfstring{\protect\ref{th:W-noncst}}{1.5} and Theorem
\texorpdfstring{\protect\ref{th:W-noncst2}}{3.3}}

Recall that Lemmata~\ref{lem:Curv-Lyap} and~\ref{lem:Lyap-noncst}
yield the existence of a Lyapunov\vspace*{2pt} function $V=V_{x_0}$, for some
$x_0\in E$, Lemma~\ref{lem:d-contract} shows that $\widetilde{d}$ is
contracting for $\mathbf{P}$, and Lemma~\ref{lem:d-small} proves that
sublevel sets of $V$ are $\widetilde{d}$-small. So we can use Theorem~\ref{th:Weak-Harris} to deduce that there exist a probability measure
$\boldsymbol{\pi}$ and some constants $C, \lambda, t_0>0$ such that,
for all $t \ge t_0$,
\[
\mathcal{W}_{\widetilde{\mathbf{d}}} (\boldsymbol{\mu} \mathbf{P}_t,
\boldsymbol{\pi} ) \leq C \mathrm{e}^{-\lambda t} \mathcal{W}_{\widetilde{{\mathbf{d}}}} (
\boldsymbol{\mu}, \boldsymbol{\pi} ),
\]
for every probability measure $\boldsymbol{\mu}$ on $\mathbf{E}$. In
this expression, $ \widetilde{\mathbf{d}}$ is defined by
\[
\widetilde{\mathbf{d}}(\mathbf{x},\mathbf{y}) = \sqrt{\bigl(\one
_{i\neq
j} + \one_{i = j} \bigl(1 \wedge d^q(x,y)\bigr)
\bigr) \bigl( 1 + d^q(x,x_0) + d^q(y,x_0)
\bigr)},
\]
where $\mathbf{x}=(x,i)$, $\mathbf{y}=(y,j)$ belong to $\mathbf{E}$,
$x_0$ is as in Assumption~\ref{hypo:Curv} and $q\in(0,1]$.
%We
%conclude the proof by noting
%that $\mathbf{d} \leq\widetilde{\mathbf{d}}$.
Noting that $\mathbf{d} \leq\widetilde{\mathbf{d}}$ we conclude that
for $t \ge t_0$ one has
\[
\mathcal{W}_{\mathbf{d}} (\delta_{\mathbf{y}_0} \mathbf {P}_t,
\boldsymbol{\pi} ) \leq C \mathrm{e}^{-\lambda t} \biggl(1+\sum
_{i\in F} \int_E d(y_0,x)^q
\boldsymbol{\pi} (\mathrm{d}x,i) \biggr).
\]
Since furthermore
\[
\mathcal{W}_{\mathbf{d}} (\delta_{\mathbf{y}_0} \mathbf {P}_t,
\boldsymbol{\pi} ) \leq1,
\]
for all $t\leq t_0$, this ends the proof.

%s4 #&#
\section{Two special cases}

Here, we give some sufficient conditions allowing to verify our main
assumptions in situations where
the underlying processes are deterministic or diffusive. Note that we
can find sufficient conditions in \cite{Clo12} for stochastically
monotone processes, in \cite{CJ12} for birth--death processes and in
\cite{E11} for diffusion processes.

%s4.1 #&#
\subsection{The case of diffusion processes}
\label{sect:dif}

Let us recall that a diffusion process on $\R^d$, $d\in\N^*$, is a
process generated by\vspace*{-1pt}
%
%
%e4.1 #&#
\begin{equation}
\label{eq:gen-diffusion} \forall x \in\R^d,\qquad\mathcal{L} f(x) = \sum
_{i=1}^d b_i (x) \, \partial
_i f(x) + \sum_{i,j=1}^d
\bigl(\sigma(x)\sigma(x)^{t}\bigr)_{i,j}\, \partial
_{i,j} f(x),
\end{equation}
where $f$ is a smooth enough function and $b, \sigma$ are regular
enough, say
%
%
%e4.2 #&#
\begin{equation}
\label{eq:hypo} \forall x,y \in\R^d,\qquad\bigl \Vert\sigma(x) - \sigma(y)
\bigr \Vert+ \bigl \Vert b(x) -b(y) \bigr \Vert\leq K \Vert x-y \Vert
\end{equation}
for some $K>0$. In the previous expression, $\Vert\cdot\Vert$
denotes both the Euclidean norm and the subordinate norm.
%

%le4.1 #&#
\begin{lem}
\label{lem:WC-diff}
Let $(P_t)_{t\geq0}$ be the Markov semigroup generated by \eqref
{eq:gen-diffusion}. If $\sigma$ is constant and
%
%
%e4.3 #&#
\begin{equation}
\label{e:assb} \forall x,y\in\R^d,\qquad\bigl\langle b(x) - b(y), x-y
\bigr\rangle\leq- \alpha \Vert x -y \Vert^2,
\end{equation}
for some $\alpha\in\R$, then
\[
\forall t \geq0, \qquad\mathcal{W}_{\Vert\cdot\Vert} (\mu P_t, \nu
P_t) \leq\mathrm{e}^{-\alpha t} \mathcal{W}_{\Vert\cdot\Vert} (\mu,
\nu),
\]
for any probability measures $\mu$ and $\nu$.
\end{lem}

\begin{pf}
This is an immediate consequence of \eref{e:assb}. One can see this by
using the same Brownian motion for two different
solutions of the SDE starting with different initial measures.
\end{pf}

If $\sigma$ is not constant, then one can also use \cite{BCGMZ}, Proposition~6.1,
which essentially requires the Lipschitz constant of $\sigma$ to be
sufficiently small
compared to the rate of contraction $\alpha$, see also \cite
{RS05,E11}. The ``small level sets'' assumption of Theorem~\ref
{th:dtv-cst} or Theorem~\ref{th:dtv-noncst} is satisfied if one of the
underlying diffusions verifies H\"ormander's hypoellipticity assumption
and satisfies furthermore a natural controllability assumption. See,
for instance, \cite{H11} for an introduction on this subject.

%
%re4.2 #&#
\begin{Rq}[(Exponential convergence for an infinite dimensional process)]
The previous result gives also the convergence for switching
Fokker--Planck processes. Indeed, we can consider that each underlying
Markov process $(Z^{(i)}_t)_{t\geq0}$ is deterministic, belongs to the
space of smooth density functions, and verifies
\[
\partial_t Z^{(i)}_t (x) = \sum
_{k=1}^d - \partial_k
\bigl(b_k Z^{(i)}_t\bigr) (x) + \sum
_{k,l=1}^d \partial_{k,l} \bigl(
\sigma_{k,l} Z^{(i)}_t\bigr) (x)
\]
for all $x\in\R^d$, and $t\geq0$. The previous lemma gives a
contraction as in Assumption~\ref{hypo:Curv}, for each underlying
process, where $d$ is the Wasserstein metric.
\end{Rq}

%s4.2 #&#
\subsection{Case of piecewise deterministic Markov processes}
\label{sect:PDMP}

Let us assume that each one of the underlying Markov processes is
actually deterministic. More precisely, we consider that $E$ is an open
of $\R^d$, $d\in\N^*$ and $\mathcal{L}^{(i)} f = G^{(i)} \cdot
\nabla f $, for every $i\in F$, where $(G^{(i)})_{i\in F}$ is a family
of vector fields such that the ordinary differential equations
$x'=G^{(i)}(x)$ have a unique and global solution for any initial
condition, for every $i\in F$. Lemma~\ref{lem:WC-diff} gives the
assumption in order to apply Theorem~\ref{th:W-cst} and Theorem~\ref
{th:W-noncst}. In general, we cannot apply Theorem~\ref{th:dtv-cst}
or Theorem~\ref{th:dtv-noncst2} but \cite{BH12,MLMZ-Horm} give a
sufficient condition ensuring that $\mathbf{X}$ generates densities:

%
%as4.3 #&#
\begin{hypo}[(H\"{o}rmander-type bracket conditions)]
\label{hyp:Hormander}
Let $\mathcal{G}_0=\{ G^{(i)} -G^{(j)}, i\neq j \}$ and for all $k\geq0$,
\[
\mathcal{G}_{k+1}= \bigl\{ \bigl[G^{(i)},G\bigr] | i\in F, G
\in\mathcal{G}_k \bigr\},
\]
where $[\cdot,\cdot]$ designs the Lie bracket. We have $\mathcal
{G}_k(x) = \{
G(x) | G \in\mathcal{G}_k \}= \R^d$, for every $x\in E$.
\end{hypo}

In this case our main result gives
the following theorem.
%

%th4.4 #&#
\begin{theo}
Let us suppose that Assumptions~\ref{hyp:a-reg}, \ref{hyp:rec} and
\ref{hyp:Hormander} hold. If one of the two following assumptions is satisfied:
\begin{itemize}
\item$a(x,i,j)$ does not depend to $x$ and $I$ is ergodic with an
invariant measure $\nu$ satisfying
\[
\sum_{i\in F} \nu(i) \lambda(i)>0;
\]
\item Assumption~\ref{hyp-iborne} holds and
\[
\sum_{i\in F} \nu(i) \alpha(i)>0,
\]
for some increasing sequence $\alpha$ satisfying $\alpha(n) \leq\min_{i \in F_n} \lambda(i)$, for all $n\leq\bar{n}$
\end{itemize}
then there exist a probability measure $\boldsymbol{\pi} $ and three
constants $C, \lambda, t_0 >0$ such that
\[
\forall t \geq t_0,\qquad d_\TV (\delta_{\mathbf{x}}
\mathbf{P}_t, \boldsymbol{\pi} ) \leq C \mathrm{e}^{-\lambda t}
\bigl(1+V(x)\bigr),
\]
for every $\mathbf{x}=(x,i) \in\mathbf{E}$.
\end{theo}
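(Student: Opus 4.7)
The strategy is to reuse the scheme of Theorems~\ref{th:dtv-cst} and \ref{th:dtv-noncst2}, but to replace the hypothesis that the sublevel sets of $V$ are small for a \emph{single} underlying semigroup $P^{(i_0)}_t$ (which necessarily fails for purely deterministic flows) by a direct small-set property for the joint semigroup $\mathbf{P}_t$ itself. The Hörmander bracket condition \ref{hyp:Hormander} provides exactly this replacement, via the hypoellipticity results of~\cite{BH12,MLMZ-Horm}. Note that since the statement involves the constants $\lambda(i)$ and the function $V$, Assumption~\ref{hyp:Lyap} is understood to hold for each underlying flow (this is an explicit condition on the vector fields $G^{(i)}$, e.g.\ a drift condition $G^{(i)}\cdot \nabla V \le -\lambda(i) V + K$).

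\textbf{Step 1: Joint Lyapunov function.} In the first case ($a$ constant in $x$), apply Lemma~\ref{lem:Lyap-cst} to obtain $q\in(0,1]$ and constants $C_V,K_V,\lambda_V>0$ such that $\mathbf{P}_t V^q(\mathbf{x})\le C_V e^{-\lambda_V t} V^q(x) + K_V$. In the second case, Assumption~\ref{hyp-iborne} is in force and Lemma~\ref{lem:Lyap-noncst} yields the same conclusion. In either case, $V^q$ is a Lyapunov function for $\mathbf{P}_t$ on $\mathbf{E}$.

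\textbf{Step 2: Small sets from hypoellipticity.} By the main results of~\cite{BH12,MLMZ-Horm}, under Assumption~\ref{hyp:Hormander} together with the non-explosion and recurrence guaranteed by Assumptions~\ref{hyp:a-reg} and \ref{hyp:rec}, the transition kernel $\mathbf{P}_t((x,i),\cdot)$ is, for all sufficiently large $t$, absolutely continuous with respect to $\text{Leb}\otimes\text{counting}$ on $\mathbf{E}$, with a jointly continuous and strictly positive density. A standard compactness argument (bounding the density from below uniformly on the product of a compact set of starting points and a compact set of target points) then implies that for every $C>0$ there exist $t_C>0$, $\varepsilon_C>0$ and a probability measure $\mu_C$ on $\mathbf{E}$ such that
\begin{equation*}
\mathbf{P}_{t_C}(\mathbf{x},A)\ge \varepsilon_C\,\mu_C(A)\,,
\end{equation*}
for every $\mathbf{x}$ with $V(x)\le C$ and every measurable $A\subset \mathbf{E}$. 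In other words, the sublevel sets of $V^q$ are small for $\mathbf{P}_t$ in the sense of Definition~\ref{def:small}. (For this step one also needs the sublevel sets to be bounded in $E$, which follows from $V$ being proper, e.g.\ the coercive choice $V = V_{x_0}$ of Lemma~\ref{lem:Curv-Lyap}.)

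\textbf{Step 3: Classical Harris.} Combining Steps 1 and 2, the semigroup $\mathbf{P}_t$ admits a Lyapunov function whose sublevel sets are all small. Theorem~\ref{th:Harris} then delivers a unique invariant measure $\boldsymbol{\pi}$ and constants $C,\lambda>0$ with
\begin{equation*}
d_{\TV}(\delta_{\mathbf{x}}\mathbf{P}_t,\boldsymbol{\pi})\le C e^{-\lambda t}\bigl(1+V^q(x)\bigr)\,,
\end{equation*}
which yields the announced bound after replacing $V^q$ by the (larger) function $V$.

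\textbf{Main obstacle.} The delicate point is Step 2: turning the qualitative regularity statements of \cite{BH12,MLMZ-Horm} into a \emph{uniform} lower bound on the density over compact sets of starting points, which is what makes sublevel sets small. This really has two ingredients — the bracket condition gives smoothness (hence continuity) of the density, while recurrence plus a controllability argument through the bracket-generated directions gives positivity; the Feller-type continuity of the semigroup then upgrades pointwise positivity to the required uniform bound. Once this step is in hand, the rest of the proof is a direct application of the machinery already developed in Section~\ref{sect:cst}.
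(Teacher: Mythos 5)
Your proposal matches the paper's proof essentially line for line: the paper also constructs the Lyapunov function via Lemma~\ref{lem:Lyap-cst} (constant case) or Lemma~\ref{lem:Lyap-noncst} (non-constant case), invokes \cite[Theorem~6.6]{MLMZ-Horm} to conclude that compact sets are small for $\mathbf{X}$ under the bracket condition, and then applies classical Harris (Theorem~\ref{th:Harris}). The only difference is that the paper simply cites that theorem for Step~2, whereas you sketch the underlying mechanism (density regularity plus controllability gives a uniform minorization over compacts); your observation that Assumption~\ref{hyp:Lyap} is implicitly in force is also correct.
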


\begin{pf}
Using \cite{MLMZ-Horm}, Theorem~6.6, we see that compact sets are
small for $\mathbf{X}$. Using Lemma~\ref{lem:Lyap-cst} in the first
case and Lemma~\ref{lem:Lyap-noncst} in the second case, we see that
we can apply Theorem~\ref{th:Harris}.
\end{pf}

%s5 #&#
\section{Examples}
\label{sect:exemple}

Here, we give three simple examples to illustrate our results.

%s5.1 #&#
\subsection{The most elementary example}

Let us consider the example where $X$ belongs to $\R$ and verifies
\[
\forall t\geq0, \qquad\partial_t X_t = I_t
X_t,
\]
where $(I_t)_{t\geq0}$ is the continuous time Markov chain, on $\{
-1,1\}$, which jumps from $1$ to $-1$ with rate $a_1>0$ and from $-1$
to $1$ with rate $a_{-1}>0$.
If $a_1>a_{-1}$ then Theorems~\ref{th:W-cst} and~\ref{th:W-noncst}
give the exponential
ergodicity of $\mathbf{X}$ in the Wasserstein distance. Here, the
invariant law is
\[
\delta_0 \otimes\frac{1}{a_{-1}+a_1} (a_{1}
\delta_{-1} + a_{-1} \delta_1),
\]
and there is clearly no convergence in total variation. Thus, classical
Harris' theorem does not work here. Furthermore, the classical law of
large number gives
\[
\lim_{t \rightarrow+ \infty} X_t = \cases{ %
0 &\quad$
\mbox{a.s., if } a_1>a_{-1}$,
\cr
+ \infty&\quad$\mbox{a.s.,
if } a_1<a_{-1}$. }
\]

In particular, there is no convergence when $a_1<a_{-1}$.

%
%re5.1 #&#
\begin{Rq}
In our main theorems, we use a Wasserstein distance associated to a
distance comparable to $d^q$ rather than $d$. We choose this distance
because, in general, moments of $\mathbf{X}$ can explode even though
$\mathbf{X}$ converges in law. For instance, in the above
example, one has $\lim_{t \to\infty} \E X_t = \infty$ as soon as
$a_{1}<1$. See also \cite{BGM} for
comments on the optimal choice of the parameter $q$.
\end{Rq}

%s5.2 #&#
\subsection{Wasserstein contraction of some switching dynamical systems}

Let us consider a slight generalisation of the previous example; that
is $X$ belongs to $\R$ and verifies
%
%
%e5.1 #&#
\begin{equation}
\label{eq:positive-mean} \forall t\geq0,\qquad\partial_t X_t = -
a(I_t) X_t,
\end{equation}
where $(I_t)_{t\geq0}$ is a recurrent continuous time Markov chain on
a finite state space $F$ and $a$ a function from $F$ to $\R$. Theorem~\ref{th:W-cst} gives the exponential--Wasserstein ergodicity under the
condition that
%
%
%e5.2 #&#
\begin{equation}
\label{eq:positive-mean2} \sum_{i \in F} a(i) \nu(i)>0,
\end{equation}
where $\nu$ is a invariant measure of $I$. This simple example
satisfies a bound like in Assumption~\ref{hypo:Curv}. Indeed we have
the following lemma.
%

%le5.2 #&#
\begin{lem}
If \eqref{eq:positive-mean} and \eqref{eq:positive-mean2} are
satisfied then there is a distance $\boldsymbol{\delta}$ on $\mathbf
{E}$ such that the Wasserstein curvature of the semigroup of $\mathbf
{X}$ is positive, that is, there exists $\lambda>0$ such that
\[
\forall t\geq0,\qquad\mathcal{W}_{\boldsymbol{\delta}} (\delta _{\mathbf{x}}
\mathbf{P}_t, \delta_{\mathbf{y}} \mathbf{P}_t ) \leq
\mathrm{e}^{- \lambda t} \boldsymbol{\delta}(\mathbf {x},\mathbf{y}),
\]
for all $\mathbf{x},\mathbf{y} \in\mathbf{E}$.
\end{lem}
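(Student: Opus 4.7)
The strategy is a tilted Perron--Frobenius argument combined with a synchronous coupling. Let $Q$ be the generator of $I$ on $F$ and set $D_a = \mathrm{diag}(a(i))_{i\in F}$. The matrix $Q - q D_a$, after adding a large multiple of the identity to make it non-negative and irreducible, has by Perron--Frobenius a simple real leading eigenvalue, which I write as $-\lambda(q)$, with strictly positive right eigenvector $h_q$. At $q=0$ one has $\lambda(0) = 0$ with constant eigenvector, and analytic perturbation theory (Feynman--Hellmann), using the left eigenvector $\nu$ of $Q$, gives $\lambda'(0) = \sum_{i\in F} \nu(i)\,a(i) > 0$. Hence $\lambda := \lambda(q) > 0$ for all sufficiently small $q \in (0,1]$, and the corresponding Feynman--Kac identity reads
\begin{equation*}
\E_i\!\left[ h_q(I_t)\, \exp\!\left(-q \int_0^t a(I_s)\,ds\right) \right] = e^{-\lambda t}\, h_q(i)\;.
\end{equation*}

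Fix a constant $M > 0$ to be chosen and define
\begin{equation*}
\boldsymbol{\delta}((x,i),(y,j)) = (h_q(i) + h_q(j))\,\bigl(|x-y|^q \wedge 1\bigr) + M\, \1_{i \neq j}\;,
\end{equation*}
replacing this expression by the induced path pseudo-metric if needed to enforce the triangle inequality. For the coupling I would reuse the construction of Lemma~\ref{lem:d-contract}: both $I$ and $J$ are driven by a common Poisson clock of rate $r \geq 2\bar a$ and at each ring the chains are coupled to maximise the probability of a simultaneous jump to the same state, so that after the meeting time $T_c = \inf\{s\geq 0 : I_s = J_s\}$ the two chains coincide forever and the two linear flows are run synchronously. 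On $\{s \geq T_c\}$, synchrony gives $|X_t - Y_t|^q = |X_{T_c} - Y_{T_c}|^q \exp\bigl(-q\!\int_{T_c}^t a(I_s)\,ds\bigr)$, and the Feynman--Kac identity above yields exact exponential contraction at rate $\lambda$ of the first term of $\boldsymbol{\delta}$ in the case $T_c = 0$. For $i \neq j$, a Perron--Frobenius bound of the form $\p(T_c > t) \leq C e^{-\theta t}\,\1_{i \neq j}$, which follows as in Lemma~\ref{lem:Per-Frob} by studying the coupled chain restricted to the off-diagonal, controls the indicator contribution by $CM e^{-\theta t}\,\1_{i\neq j}$.

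The main obstacle lies in the $i \neq j$ case: on $\{s < T_c\}$ the two linear flows, driven by different values $a(I_s)$ and $a(J_s)$, may expand at rate up to $\|a\|_\infty$, so one cannot hope to keep the unbounded quantity $|x-y|^q$ in the distance. The truncation $|x-y|^q \wedge 1$ caps this contribution by the uniform constant $2\max_i h_q(i)$, at the mild price that $\boldsymbol{\delta}$ only becomes a true distance after passage to the path closure. Tuning $q$ small enough that $\theta > \lambda$ and $M$ large enough that it dominates the cap $2\max_i h_q(i)$, one combines the synchronous estimate after $T_c$ with the coupling estimate for the indicator to obtain
\begin{equation*}
\E\bigl[\boldsymbol{\delta}(\mathbf{X}_t, \mathbf{Y}_t)\bigr] \leq e^{-\lambda' t}\, \boldsymbol{\delta}(\mathbf{x},\mathbf{y})
\end{equation*}
for some $\lambda' > 0$ and all $t \geq 0$, which is the desired positive Wasserstein curvature.
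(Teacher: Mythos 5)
Your overall strategy (tilted Perron--Frobenius to produce $h_q$ with $(Q-qD_a)h_q=-\lambda h_q$, then build a metric that weights $|x-y|^q$ by the eigenvector) is the same as the paper's, but your choice of metric breaks the claim. The paper takes
\begin{equation*}
\boldsymbol{\delta}(\mathbf{x},\mathbf{y}) = \1_{i=j}\,\psi(i)\,|x-y|^p + \1_{i\neq j}\,\frac{\overline{\psi}}{\underline{\psi}}\bigl(\psi(i)|x|^p + \psi(j)|y|^p + 1\bigr)\;,
\end{equation*}
\emph{without} truncating $|x-y|^p$, and for $i\neq j$ it keeps the two positions separately rather than compressing them into a single indicator. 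This matters in two places that your version cannot handle.

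First, the truncation destroys exact contraction in the diagonal case. Take $i=j$, $|x-y|>1$ and $a(i)>\lambda/q$. For the synchronous coupling (which is the optimal choice here, since forcing $I\neq J$ only adds the $M$ term), one has $\boldsymbol{\delta}(\mathbf{X}_t,\mathbf{Y}_t)=2h_q(I_t)\bigl(|X_t-Y_t|^q\wedge 1\bigr)$, and for $t$ small the truncation is still active, so the $x$-dependence is frozen and
\begin{equation*}
\left.\frac{d}{dt}\right|_{t=0}\E\bigl[\boldsymbol{\delta}(\mathbf{X}_t,\mathbf{Y}_t)\bigr] = 2(Qh_q)(i) = 2\bigl(qa(i)-\lambda\bigr)h_q(i) > 0\;,
\end{equation*}
which is strictly positive, whereas positive Wasserstein curvature would require it to be $\le -\lambda' \cdot 2h_q(i)<0$. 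Since $\lambda=\lambda(q)\approx q\sum_i\nu(i)a(i)$ as $q\downarrow 0$, you cannot tune $q$ small to avoid $a(i)>\lambda/q$ for states with $a(i)$ above the mean; this is not a borderline issue. Removing the cap restores exact contraction on the diagonal (that is precisely what the Feynman--Kac identity gives you), but then you lose the uniform bound you rely on before $T_c$.

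Second, even accepting the cap, the block $\E\bigl[\1_{T_c\le t}(h_q(I_t)+h_q(J_t))(|X_t-Y_t|^q\wedge 1)\bigr]$ need not decay, because $|X_{T_c}-Y_{T_c}|$ is unbounded (the two flows driven by different $a(I_s)$, $a(J_s)$ can diverge before the chains meet) and the cap merely sits at $1$ until $|X_t-Y_t|^q$ drops below $1$, which can take an arbitrarily long time. This is exactly the term that the paper's off-diagonal metric controls: since $\psi(i)|x|^p+\psi(j)|y|^p$ involves the positions of $X$ and $Y$ \emph{separately}, the Feynman--Kac identity applies to each coordinate on its own and gives $\E[\psi(I_t)|X_t|^p+\psi(J_t)|Y_t|^p]=e^{-\rho t}(\psi(i)|x|^p+\psi(j)|y|^p)$ regardless of whether $I$ and $J$ have met yet. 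Your $M\1_{i\neq j}$ carries no information about the positions, so there is no mechanism to recover the decay once the positions have drifted apart. The fix is structural: replace the cap-plus-indicator off-diagonal piece by the Lyapunov-type expression $\frac{\overline{\psi}}{\underline{\psi}}(\psi(i)|x|^p+\psi(j)|y|^p+1)$, and drop the truncation on the diagonal.
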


\begin{pf}
First, let us give a complement on the conclusion of Lemma~\ref
{lem:Per-Frob}. The Markov chain $I$ satisfies its assumptions and
using the results of \cite{BGM}, there exist a function $\psi$ on
$F$, $\rho>0$ and $p\in(0,1)$ verifying
\[
\forall t \geq0,\qquad\mathbb{E} \bigl[ \psi(I_t)
\mathrm{e}^{-
\int_0^t p
a(I_s) \,\mathrm{d}s} \bigr] = \mathrm{e}^{- \rho t} \mathbb{E} \bigl[
\psi(I_0) \bigr].
\]
Now let $\boldsymbol{\delta}$ be the distance, on $\mathbf{E}$,
defined by
\[
\forall\mathbf{x},\mathbf{y} \in\mathbf{E}, \qquad\boldsymbol {\delta} (
\mathbf{x}, \mathbf{y}) = \one_{\{i=j\}} \psi(i) |x-y|^p +
\one_{\{i \neq j\}} \frac
{\overline
{\psi}}{\underline{\psi}} \bigl(\psi(i) |x|^p + \psi(j)
|y|^p+1\bigr),
\]
where
\[
\overline{\psi}= \max_{k \in F} \psi(k)\quad\mbox{and}\quad
\underline {\psi}= \min_{k \in F} \psi(k).
\]
Now, using the fact that for all $t>0$ one has
\[
X_t= X_0 \mathrm{e}^{- \int_0^t a(I_s) \,\mathrm{d}s},
\]
the proof is straightforward.
\end{pf}

%s5.3 #&#
\subsection{Surprising blow-up under exponential ergodicity assumptions}

Here we give some comments on \cite{BBMZ-Plan}, Example~1.4, which
also illustrate the sharpness of our criteria. Let us consider $E=\R
^2$, $F=\{0,1\}$, $\mathcal{L}^{(i)} f = A_{i} \cdot\nabla f$ where
\[
A_0 = %
\pmatrix{ -1 & 3
\cr
-1/3 & -1 } %
\quad\mbox{and}\quad A_1 = %
\pmatrix{ -1 & -1/3
\cr
3 & -1
} %
,
\]
$ a(x,0,0)=a(x,1,1)=0$, and $a(x,1,0)=a(x,0,1)=a>0$, for all $x\in\R
^2$. In short, $\mathbf{X}$ is generated, for all $x\in\R^2$ and $i
\in\{0,1\}$, by
%
%
%e5.3 #&#
\begin{equation}
\label{eq:gen-spiral} \mathbf{L} f(x,i) = A_i \cdot\nabla f(x,i) + a
\bigl( f(x,1-i) - f(x,i) \bigr).
\end{equation}
Since $a$ does not depend on its first component, $I$ is a Markov
process and it converges exponentially to
\[
\nu= \tfrac{1}{2} \delta_0 + \tfrac{1}{2}
\delta_1.
\]
For each $i\in\{0,1\}$, we have $\partial_t Z^{(i)}_t = A_i
Z^{(i)}_t$ and thus we easily prove that
%
%
%e5.4 #&#
\begin{equation}
\label{eq:curv-spirale} \bigl\llVert Z^{(i)}_t \bigr\rrVert
_i \leq\mathrm{e}^{-t} \bigl\llVert Z^{(i)}_0
\bigr\rrVert _i \quad\mbox{and}\quad\bigl\llVert
Z^{(i)}_t \bigr\rrVert _{1-i} \leq3
\mathrm{e}^{-t} \bigl\llVert Z^{(i)}_0 \bigr
\rrVert _{1-i},
\end{equation}
for every $t\geq0$, where the norms $\Vert\cdot\Vert_0$ and $\Vert
\cdot\Vert_1$ are defined by
\[
\forall u = (u_1,u_2) \in\R^2,\qquad\llVert
u \rrVert _0= \sqrt {(u_1/3)^2 +
u_2^2} \quad\mbox{and}\quad\llVert u \rrVert
_1= \sqrt {u_1^2 +
(u_2/3)^2}.
\]
Thus each flow $i \in\{0,1\}$ contracts, with the norm $\Vert\cdot
\Vert_i$, and converges geometrically, with the norm $\Vert\cdot
\Vert_{1-i}$, to the same limit. Nevertheless, if $a$ is large enough
then \cite{BBMZ-Plan}, Example~1.4, shows that
\[
\lim_{t\rightarrow+ \infty} \Vert X_t \Vert= + \infty.
\]
In particular, the conclusion of Theorem~\ref{th:W-cst} is not
satisfied. This
illustrates the fact that assuming that each underlying dynamics
converges geometrically is not
sufficient in general to guarantee the convergence of $X$. Moreover,
this shows that it is essential in
Theorem~\ref{th:W-cst} to measure the constants $\rho(i)$ with
respect to the \textit{same}
distance for every $i$. Note that the Wasserstein curvature of
$Z^{(i)}$, with respect to $\Vert\cdot\Vert_{1-i}$, is negative and
given by $-37/3$.

%s5.4 #&#
\subsection{Non-convergence when $I$ is recurrent but not positive recurrent}

A last example is the following: the process $X$ verifies
\[
\forall t \geq0,\qquad\mathrm{d} X_t = - (X_t-
a_{I_t}) \,\mathrm{d}t,
\]
where $(a_n)_{n\geq0}$ is a bounded real sequence and $I$ is an
irreducible and recurrent continuous time Markov chain which is not
positive recurrent. It is easy to see that the sequence of laws of
$(X_t)_{t\geq0}$ is tight and we can hope that there exists a
probability measure $\pi$ verifying
\[
\lim_{t\rightarrow+ \infty} \E \bigl[ f(X_t) \bigr] = \int f \,
\mathrm{d}\pi,
\]
for every continuous and bounded function $f$ and any starting
distribution. But in general, this is false. To illustrate it, let us
consider the case when $I$ is the classical continuous-time random walk
on $\N$ reflected at $0$. Namely, $I$ is generated by
\[
J f(i) = \tfrac{1}{2}f(i+1) + \tfrac{1}{2}f(i-1) - f(i),
\]
if $i\neq0$ and
\[
J f(0) = f(1) - f(0).
\]
The sequence $a$ on the other hand is defined recursively by:
\[
a_{n+1} = \cases{ %
a_n&\quad$\mbox{if }n \notin
\bigl\{2^k | k \in\N\bigr\}$,\vspace*{2pt}
\cr
- a_n&\quad$\mbox{if }n
\in\bigl\{2^k | k \in\N\bigr\}$. }
\]
In this case, the central limit theorem gives that $I_t \approx\sqrt {t}$ and so, for very large
times, $I$ and $a$ do not switch on the same time scale. As a matter of fact,
the process $a_{I_t}$ stays constant during longer and longer stretches
of time.
It is then possible to find two sequences of \textit{deterministic}
times $(t_n)_{n\geq0}$ and $(s_n)_{n\geq0}$, both converging to
infinity, and such that
\[
\lim_{n \rightarrow+ \infty} \E \bigl[ f(X_{t_n}) \bigr] = f(0) \quad
\mbox{and}\quad\lim_{n \rightarrow+ \infty} \E \bigl[ f(X_{s_n}) \bigr]
= f(1).
\]
Thus this process exhibits ageing and is not exponentially stable, even
though there
exists $C>0$, such that for any two starting points $\mathbf{x}=(x,i)$
and $\mathbf{y}=(y,j)$, we have
\[
\forall t\geq0,\qquad\mathcal{W}_{\mathbf{d}_0} (\delta_\mathbf{x}
\mathbf{P}_t,\delta_\mathbf{y} \mathbf{P}_t) \leq
\frac{C}{\sqrt {t}} |i-j|,
\]
where $\mathbf{d}_0 ( \mathbf{x}, \mathbf{y}) = \one_{i=j} \Vert x
- y
\Vert\wedge1 + \one_{i\neq j}$.

% zodis "Acknowledgments" paliekamas pagal autoriu
\section*{Acknowledgements}

We would like to thank the referees for their careful reading of the manuscript.
Support for MH's research was provided by the Royal Society
through a Wolfson Research Merit Award. BC was partially supported by
Ecole Doctorale MSTIC, Universit\'e Paris-Est, and by ANR MANEGE (09-BLAN-0215).

%suskaldyti doi

% imsref loaded by audrone.aklyte, 2014-03-03 16:16:11
%
% imsref loaded by audrone.aklyte, 2014-03-04 10:33:14

\printhistory


\begin{thebibliography}{34}

%b1 #&#
\bibitem{BH12}
%
\begin{barticle}[mr]
\bauthor{\bsnm{Bakhtin},~\bfnm{Yuri}\binits{Y.}} \AND
\bauthor{\bsnm{Hurth},~\bfnm{Tobias}\binits{T.}}
(\byear{2012}).
\btitle{Invariant densities for dynamical systems with random switching}.
\bjournal{Nonlinearity}
\bvolume{25}
\bpages{2937--2952}.
\bid{doi={10.1088/0951-7715/25/10/2937}, issn={0951-7715}, mr={2979976}}
\end{barticle}
%
\bptok{imsref}%
% NOT OUTPUTED:
% issn = 0951-7715
% url = http://dx.doi.org/10.1088/0951-7715/25/10/2937
% number = 10
% coden = NONLE5
% fjournal = Nonlinearity
\endbibitem

%b2 #&#
\bibitem{BCGMZ}
%
\begin{barticle}[mr]
\bauthor{\bsnm{Bardet},~\bfnm{Jean-Baptiste}\binits{J.-B.}},
\bauthor{\bsnm{Christen},~\bfnm{Alejandra}\binits{A.}},
\bauthor{\bsnm{Guillin},~\bfnm{Arnaud}\binits{A.}},
\bauthor{\bsnm{Malrieu},~\bfnm{Florent}\binits{F.}} \AND
\bauthor{\bsnm{Zitt},~\bfnm{Pierre-Andr{\'e}}\binits{P.-A.}}
(\byear{2013}).
\btitle{Total variation estimates for the {TCP} process}.
\bjournal{Electron. J. Probab.}
\bvolume{18}
\bpages{no. 10, 21}.
\bid{doi={10.1214/EJP.v18-1720}, issn={1083-6489}, mr={3035738}}
\end{barticle}
%
\bptok{imsref}%
% NOT OUTPUTED:
% issn = 1083-6489
% url = http://dx.doi.org/10.1214/EJP.v18-1720
% fjournal = Electronic Journal of Probability
\endbibitem

%b3 #&#
\bibitem{BGM}
%
\begin{barticle}[mr]
\bauthor{\bsnm{Bardet},~\bfnm{Jean-Baptiste}\binits{J.-B.}},
\bauthor{\bsnm{Gu{\'e}rin},~\bfnm{H{\'e}l{\`e}ne}\binits{H.}} \AND
\bauthor{\bsnm{Malrieu},~\bfnm{Florent}\binits{F.}}
(\byear{2010}).
\btitle{Long time behavior of diffusions with {M}arkov switching}.
\bjournal{ALEA Lat. Am. J. Probab. Math. Stat.}
\bvolume{7}
\bpages{151--170}.
\bid{issn={1980-0436}, mr={2653702}}
\end{barticle}
%
\bptok{imsref}%
% NOT OUTPUTED:
% issn = 1980-0436
% fjournal = ALEA. Latin American Journal of Probability and
%Mathematical Statistics
\endbibitem

%b4 #&#
\bibitem{MLMZ-Horm}
%
\begin{bmisc}[author]
\bauthor{\bsnm{Bena{\"{\i}}m},~\bfnm{Michel}\binits{M.}},
\bauthor{\bsnm{Le Borgne},~\bfnm{St{\'e}phane}\binits{S.}},
\bauthor{\bsnm{Malrieu},~\bfnm{Florent}\binits{F.}} \AND
\bauthor{\bsnm{Zitt},~\bfnm{Pierre-Andr{\'e}}\binits{P.-A.}}
(\byear{2012}).
\bhowpublished{Qualitative properties of certain piecewise
  deterministic Markov processes. ArXiv e-prints}.
\end{bmisc}
%
\bptok{imsref}%
% NOT OUTPUTED:
% issn = 1083-589X
% url = http://dx.doi.org/10.1214/ECP.v17-1932
% fjournal = Electronic Communications in Probability
\endbibitem

%b5 #&#
\bibitem{MLMZ-Quant}
%
\begin{barticle}[mr]
\bauthor{\bsnm{Bena{\"{\i}}m},~\bfnm{Michel}\binits{M.}},
\bauthor{\bsnm{Le Borgne},~\bfnm{St{\'e}phane}\binits{S.}},
\bauthor{\bsnm{Malrieu},~\bfnm{Florent}\binits{F.}} \AND
\bauthor{\bsnm{Zitt},~\bfnm{Pierre-Andr{\'e}}\binits{P.-A.}}
(\byear{2012}).
\btitle{Quantitative ergodicity for some switched dynamical systems}.
\bjournal{Electron. Commun. Probab.}
\bvolume{17}
\bpages{no. 56, 14}.
\bid{doi={10.1214/ECP.v17-1932}, issn={1083-589X}, mr={3005729}}
\end{barticle}
%
\bptok{imsref}%
% NOT OUTPUTED:
% issn = 1083-589X
% url = http://dx.doi.org/10.1214/ECP.v17-1932
% fjournal = Electronic Communications in Probability
\endbibitem

%b6 #&#
\bibitem{BBMZ-Plan}
%
\begin{barticle}[mr]
\bauthor{\bsnm{Bena{\"{\i}}m},~\bfnm{Michel}\binits{M.}},
\bauthor{\bsnm{Le Borgne},~\bfnm{St{\'e}phane}\binits{S.}},
\bauthor{\bsnm{Malrieu},~\bfnm{Florent}\binits{F.}} \AND
\bauthor{\bsnm{Zitt},~\bfnm{Pierre-Andr{\'e}}\binits{P.-A.}}
(\byear{2014}).
\btitle{On the stability of planar randomly switched systems}.
\bjournal{Ann. Appl. Probab.}
\bvolume{24}
\bpages{292--311}.
\bid{doi={10.1214/13-AAP924}, issn={1050-5164}, mr={3161648}}
\bptnote{check year}%
\end{barticle}
%
\bptok{imsref}%
% NOT OUTPUTED:
% issn = 1050-5164
% url = http://dx.doi.org/10.1214/13-AAP924
% number = 1
% fjournal = The Annals of Applied Probability
\endbibitem

%b7 #&#
\bibitem{BKKD}
%
\begin{barticle}[mr]
\bauthor{\bsnm{Boxma},~\bfnm{Onno}\binits{O.}},
\bauthor{\bsnm{Kaspi},~\bfnm{Haya}\binits{H.}},
\bauthor{\bsnm{Kella},~\bfnm{Offer}\binits{O.}} \AND
\bauthor{\bsnm{Perry},~\bfnm{David}\binits{D.}}
(\byear{2005}).
\btitle{On/off storage systems with state-dependent input, output, and
switching rates}.
\bjournal{Probab. Engrg. Inform. Sci.}
\bvolume{19}
\bpages{1--14}.
\bid{doi={10.1017/S0269964805050011}, issn={0269-9648}, mr={2104547}}
\end{barticle}
%
\bptok{imsref}%
% NOT OUTPUTED:
% issn = 0269-9648
% url = http://dx.doi.org/10.1017/S0269964805050011
% number = 1
% fjournal = Probability in the Engineering and Informational Sciences
\endbibitem

%b8 #&#
\bibitem{CJ12}
%
\begin{barticle}[mr]
\bauthor{\bsnm{Chafa{\"{\i}}},~\bfnm{Djalil}\binits{D.}} \AND
\bauthor{\bsnm{Joulin},~\bfnm{Ald{\'e}ric}\binits{A.}}
(\byear{2013}).
\btitle{Intertwining and commutation relations for birth--death processes}.
\bjournal{Bernoulli}
\bvolume{19}
\bpages{1855--1879}.
\bid{doi={10.3150/12-BEJ433}, issn={1350-7265}, mr={3129037}}
\bptnote{check year}%
\end{barticle}
%
\bptok{imsref}%
% NOT OUTPUTED:
% issn = 1350-7265
% url = http://dx.doi.org/10.3150/12-BEJ433
% number = 5A
% fjournal = Bernoulli. Official Journal of the Bernoulli Society for
%Mathematical Statistics and Probability
\endbibitem

%b9 #&#
\bibitem{Clo12}
%
\begin{bmisc}[auto:STB|2014/02/12|14:17:21]
\bauthor{\bsnm{Cloez},~\bfnm{Bertrand}\binits{B.}}
(\byear{2012}).
\bhowpublished{Wasserstein decay of one dimensional jump-diffusions.
ArXiv e-prints}.
\end{bmisc}
%
\bptok{imsref}%
% NOT OUTPUTED:
% sortkey = Cloez(2012
\endbibitem

%b10 #&#
\bibitem{CMMS}
%
\begin{barticle}[mr]
\bauthor{\bsnm{Collet},~\bfnm{Pierre}\binits{P.}},
\bauthor{\bsnm{Mart{\'{\i}}nez},~\bfnm{Servet}\binits{S.}},
\bauthor{\bsnm{M{\'e}l{\'e}ard},~\bfnm{Sylvie}\binits{S.}} \AND
\bauthor{\bsnm{San Mart{\'{\i}}n},~\bfnm{Jaime}\binits{J.}}
(\byear{2013}).
\btitle{Stochastic models for a chemostat and long-time behavior}.
\bjournal{Adv. in Appl. Probab.}
\bvolume{45}
\bpages{822--836}.
\bid{doi={10.1239/aap/1377868540}, issn={0001-8678}, mr={3102473}}
\bptnote{check year}%
\end{barticle}
%
\bptok{imsref}%
% NOT OUTPUTED:
% issn = 0001-8678
% url = http://dx.doi.org/10.1239/aap/1377868540
% number = 3
% fjournal = Advances in Applied Probability
\endbibitem

%b11 #&#
\bibitem{CD08}
%
\begin{barticle}[mr]
\bauthor{\bsnm{Costa},~\bfnm{O.~L.~V.}\binits{O.L.V.}} \AND
\bauthor{\bsnm{Dufour},~\bfnm{F.}\binits{F.}}
(\byear{2008}).
\btitle{Stability and ergodicity of piecewise deterministic {M}arkov
processes}.
\bjournal{SIAM J. Control Optim.}
\bvolume{47}
\bpages{1053--1077}.
\bid{doi={10.1137/060670109}, issn={0363-0129}, mr={2385873}}
\end{barticle}
%
\bptok{imsref}%
% NOT OUTPUTED:
% issn = 0363-0129
% url = http://dx.doi.org/10.1137/060670109
% number = 2
% fjournal = SIAM Journal on Control and Optimization
\endbibitem

%b12 #&#
\bibitem{CDMR}
%
\begin{barticle}[mr]
\bauthor{\bsnm{Crudu},~\bfnm{A.}\binits{A.}},
\bauthor{\bsnm{Debussche},~\bfnm{A.}\binits{A.}},
\bauthor{\bsnm{Muller},~\bfnm{A.}\binits{A.}} \AND
\bauthor{\bsnm{Radulescu},~\bfnm{O.}\binits{O.}}
(\byear{2012}).
\btitle{Convergence of stochastic gene networks to hybrid piecewise
deterministic processes}.
\bjournal{Ann. Appl. Probab.}
\bvolume{22}
\bpages{1822--1859}.
\bid{doi={10.1214/11-AAP814}, issn={1050-5164}, mr={3025682}}
\end{barticle}
%
\bptok{imsref}%
% NOT OUTPUTED:
% issn = 1050-5164
% url = http://dx.doi.org/10.1214/11-AAP814
% number = 5
% fjournal = The Annals of Applied Probability
\endbibitem

%b13 #&#
\bibitem{SY05}
%
\begin{barticle}[mr]
\bauthor{\bparticle{de} \bsnm{Saporta},~\bfnm{Beno{\^{\i
}}te}\binits{B.}} \AND
\bauthor{\bsnm{Yao},~\bfnm{Jian-Feng}\binits{J.-F.}}
(\byear{2005}).
\btitle{Tail of a linear diffusion with {M}arkov switching}.
\bjournal{Ann. Appl. Probab.}
\bvolume{15}
\bpages{992--1018}.
\bid{doi={10.1214/105051604000000828}, issn={1050-5164}, mr={2114998}}
\end{barticle}
%
\bptok{imsref}%
% NOT OUTPUTED:
% issn = 1050-5164
% url = http://dx.doi.org/10.1214/105051604000000828
% number = 1B
% fjournal = The Annals of Applied Probability
\endbibitem

%b14 #&#
\bibitem{DF99}
%
\begin{barticle}[mr]
\bauthor{\bsnm{Diaconis},~\bfnm{Persi}\binits{P.}} \AND
\bauthor{\bsnm{Freedman},~\bfnm{David}\binits{D.}}
(\byear{1999}).
\btitle{Iterated random functions}.
\bjournal{SIAM Rev.}
\bvolume{41}
\bpages{45--76}.
\bid{doi={10.1137/S0036144598338446}, issn={0036-1445}, mr={1669737}}
\end{barticle}
%
\bptok{imsref}%
% NOT OUTPUTED:
% issn = 0036-1445
% url = http://dx.doi.org/10.1137/S0036144598338446
% number = 1
% coden = SIREAD
% fjournal = SIAM Review
\endbibitem

%b15 #&#
\bibitem{E11}
%
\begin{barticle}[mr]
\bauthor{\bsnm{Eberle},~\bfnm{Andreas}\binits{A.}}
(\byear{2011}).
\btitle{Reflection coupling and {W}asserstein contractivity without convexity}.
\bjournal{C. R. Math. Acad. Sci. Paris}
\bvolume{349}
\bpages{1101--1104}.
\bid{doi={10.1016/j.crma.2011.09.003}, issn={1631-073X}, mr={2843007}}
\end{barticle}
%
\bptok{imsref}%
% NOT OUTPUTED:
% issn = 1631-073X
% url = http://dx.doi.org/10.1016/j.crma.2011.09.003
% number = 19-20
% fjournal = Comptes Rendus Math\'ematique. Acad\'emie des Sciences.
%Paris
\endbibitem

%b16 #&#
\bibitem{FGM}
%
\begin{barticle}[mr]
\bauthor{\bsnm{Fontbona},~\bfnm{Joaquin}\binits{J.}},
\bauthor{\bsnm{Gu{\'e}rin},~\bfnm{H{\'e}l{\`e}ne}\binits{H.}} \AND
\bauthor{\bsnm{Malrieu},~\bfnm{Florent}\binits{F.}}
(\byear{2012}).
\btitle{Quantitative estimates for the long-time behavior of an
ergodic variant of the telegraph process}.
\bjournal{Adv. in Appl. Probab.}
\bvolume{44}
\bpages{977--994}.
\bid{issn={0001-8678}, mr={3052846}}
\end{barticle}
%
\bptok{imsref}%
% NOT OUTPUTED:
% issn = 0001-8678
% number = 4
% fjournal = Advances in Applied Probability
\endbibitem

%b17 #&#
\bibitem{GM12}
%
\begin{barticle}[mr]
\bauthor{\bsnm{Genadot},~\bfnm{Alexandre}\binits{A.}} \AND
\bauthor{\bsnm{Thieullen},~\bfnm{Mich{\`e}le}\binits{M.}}
(\byear{2012}).
\btitle{Averaging for a fully coupled piecewise-deterministic {M}arkov
process in infinite dimensions}.
\bjournal{Adv. in Appl. Probab.}
\bvolume{44}
\bpages{749--773}.
\bid{doi={10.1239/aap/1346955263}, issn={0001-8678}, mr={3024608}}
\bptnote{check year}%
\end{barticle}
%
\bptok{imsref}%
% NOT OUTPUTED:
% issn = 0001-8678
% url = http://dx.doi.org/10.1239/aap/1346955263
% number = 3
% coden = AAPBBD
% fjournal = Advances in Applied Probability
\endbibitem

%b18 #&#
\bibitem{GG96}
%
\begin{barticle}[mr]
\bauthor{\bsnm{Goldie},~\bfnm{Charles~M.}\binits{C.M.}} \AND
\bauthor{\bsnm{Gr{\"u}bel},~\bfnm{Rudolf}\binits{R.}}
(\byear{1996}).
\btitle{Perpetuities with thin tails}.
\bjournal{Adv. in Appl. Probab.}
\bvolume{28}
\bpages{463--480}.
\bid{doi={10.2307/1428067}, issn={0001-8678}, mr={1387886}}
\end{barticle}
%
\bptok{imsref}%
% NOT OUTPUTED:
% issn = 0001-8678
% url = http://dx.doi.org/10.2307/1428067
% number = 2
% coden = AAPBBD
% fjournal = Advances in Applied Probability
\endbibitem

%b19 #&#
\bibitem{GIY}
%
\begin{barticle}[mr]
\bauthor{\bsnm{Guyon},~\bfnm{Xavier}\binits{X.}},
\bauthor{\bsnm{Iovleff},~\bfnm{Serge}\binits{S.}} \AND
\bauthor{\bsnm{Yao},~\bfnm{Jian-Feng}\binits{J.-F.}}
(\byear{2004}).
\btitle{Linear diffusion with stationary switching regime}.
\bjournal{ESAIM Probab. Stat.}
\bvolume{8}
\bpages{25--35}.
\bid{doi={10.1051/ps:2003017}, issn={1292-8100}, mr={2085603}}
\end{barticle}
%
\bptok{imsref}%
% NOT OUTPUTED:
% issn = 1292-8100
% url = http://dx.doi.org/10.1051/ps:2003017
% fjournal = ESAIM. Probability and Statistics
\endbibitem

%b20 #&#
\bibitem{H11}
%
\begin{barticle}[mr]
\bauthor{\bsnm{Hairer},~\bfnm{Martin}\binits{M.}}
(\byear{2011}).
\btitle{On {M}alliavin's proof of {H}\"ormander's theorem}.
\bjournal{Bull. Sci. Math.}
\bvolume{135}
\bpages{650--666}.
\bid{doi={10.1016/j.bulsci.2011.07.007}, issn={0007-4497}, mr={2838095}}
\end{barticle}
%
\bptok{imsref}%
% NOT OUTPUTED:
% issn = 0007-4497
% url = http://dx.doi.org/10.1016/j.bulsci.2011.07.007
% number = 6-7
% coden = BSMQA9
% fjournal = Bulletin des Sciences Math\'ematiques
\endbibitem

%b21 #&#
\bibitem{HM10}
%
\begin{barticle}[mr]
\bauthor{\bsnm{Hairer},~\bfnm{Martin}\binits{M.}} \AND
\bauthor{\bsnm{Majda},~\bfnm{Andrew~J.}\binits{A.J.}}
(\byear{2010}).
\btitle{A simple framework to justify linear response theory}.
\bjournal{Nonlinearity}
\bvolume{23}
\bpages{909--922}.
\bid{doi={10.1088/0951-7715/23/4/008}, issn={0951-7715}, mr={2602020}}
\end{barticle}
%
\bptok{imsref}%
% NOT OUTPUTED:
% issn = 0951-7715
% url = http://dx.doi.org/10.1088/0951-7715/23/4/008
% number = 4
% coden = NONLE5
% fjournal = Nonlinearity
\endbibitem

%b22 #&#
\bibitem{HM09}
%
\begin{barticle}[mr]
\bauthor{\bsnm{Hairer},~\bfnm{Martin}\binits{M.}} \AND
\bauthor{\bsnm{Mattingly},~\bfnm{Jonathan~C.}\binits{J.C.}}
(\byear{2008}).
\btitle{Spectral gaps in {W}asserstein distances and the 2{D}
stochastic {N}avier-{S}tokes equations}.
\bjournal{Ann. Probab.}
\bvolume{36}
\bpages{2050--2091}.
\bid{doi={10.1214/08-AOP392}, issn={0091-1798}, mr={2478676}}
\end{barticle}
%
\bptok{imsref}%
% NOT OUTPUTED:
% issn = 0091-1798
% url = http://dx.doi.org/10.1214/08-AOP392
% number = 6
% coden = APBYAE
% fjournal = The Annals of Probability
\endbibitem

%b23 #&#
\bibitem{HM11}
%
\begin{bincollection}[mr]
\bauthor{\bsnm{Hairer},~\bfnm{Martin}\binits{M.}} \AND
\bauthor{\bsnm{Mattingly},~\bfnm{Jonathan~C.}\binits{J.C.}}
(\byear{2011}).
\btitle{Yet another look at {H}arris' ergodic theorem for {M}arkov chains}.
In \bbooktitle{Seminar on {S}tochastic {A}nalysis, {R}andom {F}ields
and {A}pplications {VI}}.
\bseries{Progress in Probability}
\bvolume{63}
\bpages{109--117}.
\blocation{Basel}: \bpublisher{Birkh\"auser}.
\bid{doi={10.1007/978-3-0348-0021-1_7}, mr={2857021}}
\end{bincollection}
%
\bptok{imsref}%
% NOT OUTPUTED:
% url = http://dx.doi.org/10.1007/978-3-0348-0021-1_7
\endbibitem

%b24 #&#
\bibitem{HMS}
%
\begin{barticle}[mr]
\bauthor{\bsnm{Hairer},~\bfnm{M.}\binits{M.}},
\bauthor{\bsnm{Mattingly},~\bfnm{J.~C.}\binits{J.C.}} \AND
\bauthor{\bsnm{Scheutzow},~\bfnm{M.}\binits{M.}}
(\byear{2011}).
\btitle{Asymptotic coupling and a general form of {H}arris' theorem
with applications to stochastic delay equations}.
\bjournal{Probab. Theory Related Fields}
\bvolume{149}
\bpages{223--259}.
\bid{doi={10.1007/s00440-009-0250-6}, issn={0178-8051}, mr={2773030}}
\end{barticle}
%
\bptok{imsref}%
% NOT OUTPUTED:
% issn = 0178-8051
% url = http://dx.doi.org/10.1007/s00440-009-0250-6
% number = 1-2
% coden = PTRFEU
% fjournal = Probability Theory and Related Fields
\endbibitem

%b25 #&#
\bibitem{HSV}
%
\begin{bmisc}[auto:STB|2014/02/12|14:17:21]
\bauthor{\bsnm{Hairer},~\bfnm{Martin}\binits{M.}},
\bauthor{\bsnm{Stuart},~\bfnm{Andrew}\binits{A.}} \AND
\bauthor{\bsnm{Vollmer},~\bfnm{Sebastian}\binits{S.}}
\bhowpublished{Spectral gap for a Metropolis--{H}asting algorithm in
infinite dimensions. Arxiv e-print}.
\end{bmisc}
%
\bptok{imsref}%
% NOT OUTPUTED:
% sortkey = Hairer
\endbibitem

%b26 #&#
\bibitem{J07}
%
\begin{barticle}[mr]
\bauthor{\bsnm{Joulin},~\bfnm{Ald{\'e}ric}\binits{A.}}
(\byear{2007}).
\btitle{Poisson-type deviation inequalities for curved continuous-time
{M}arkov chains}.
\bjournal{Bernoulli}
\bvolume{13}
\bpages{782--798}.
\bid{doi={10.3150/07-BEJ6039}, issn={1350-7265}, mr={2348750}}
\end{barticle}
%
\bptok{imsref}%
% NOT OUTPUTED:
% issn = 1350-7265
% url = http://dx.doi.org/10.3150/07-BEJ6039
% number = 3
% fjournal = Bernoulli. Official Journal of the Bernoulli Society for
%Mathematical Statistics and Probability
\endbibitem

%b27 #&#
\bibitem{J09}
%
\begin{barticle}[mr]
\bauthor{\bsnm{Joulin},~\bfnm{Ald{\'e}ric}\binits{A.}}
(\byear{2009}).
\btitle{A new {P}oisson-type deviation inequality for {M}arkov jump
processes with positive {W}asserstein curvature}.
\bjournal{Bernoulli}
\bvolume{15}
\bpages{532--549}.
\bid{doi={10.3150/08-BEJ158}, issn={1350-7265}, mr={2543873}}
\end{barticle}
%
\bptok{imsref}%
% NOT OUTPUTED:
% issn = 1350-7265
% url = http://dx.doi.org/10.3150/08-BEJ158
% number = 2
% fjournal = Bernoulli. Official Journal of the Bernoulli Society for
%Mathematical Statistics and Probability
\endbibitem

%b28 #&#
\bibitem{MT93}
%
\begin{barticle}[mr]
\bauthor{\bsnm{Meyn},~\bfnm{Sean~P.}\binits{S.P.}} \AND
\bauthor{\bsnm{Tweedie},~\bfnm{R.~L.}\binits{R.L.}}
(\byear{1993}).
\btitle{Stability of {M}arkovian processes. {III}. {F}oster-{L}yapunov
criteria for continuous-time processes}.
\bjournal{Adv. in Appl. Probab.}
\bvolume{25}
\bpages{518--548}.
\bid{doi={10.2307/1427522}, issn={0001-8678}, mr={1234295}}
\end{barticle}
%
\bptok{imsref}%
% NOT OUTPUTED:
% issn = 0001-8678
% url = http://dx.doi.org/10.2307/1427522
% number = 3
% coden = AAPBBD
% fjournal = Advances in Applied Probability
\endbibitem

%b29 #&#
\bibitem{O09}
%
\begin{barticle}[mr]
\bauthor{\bsnm{Ollivier},~\bfnm{Yann}\binits{Y.}}
(\byear{2009}).
\btitle{Ricci curvature of {M}arkov chains on metric spaces}.
\bjournal{J. Funct. Anal.}
\bvolume{256}
\bpages{810--864}.
\bid{doi={10.1016/j.jfa.2008.11.001}, issn={0022-1236}, mr={2484937}}
\end{barticle}
%
\bptok{imsref}%
% NOT OUTPUTED:
% issn = 0022-1236
% url = http://dx.doi.org/10.1016/j.jfa.2008.11.001
% number = 3
% coden = JFUAAW
% fjournal = Journal of Functional Analysis
\endbibitem

%b30 #&#
\bibitem{O10}
%
\begin{bincollection}[mr]
\bauthor{\bsnm{Ollivier},~\bfnm{Yann}\binits{Y.}}
(\byear{2010}).
\btitle{A survey of {R}icci curvature for metric spaces and {M}arkov chains}.
In \bbooktitle{Probabilistic Approach to Geometry}.
\bseries{Adv. Stud. Pure Math.}
\bvolume{57}
\bpages{343--381}.
\blocation{Tokyo}:
\bpublisher{Math. Soc. Japan}.
\bid{mr={2648269}}
\end{bincollection}
%
\bptok{imsref}%
\endbibitem

%b31 #&#
\bibitem{PMW}
%
\begin{barticle}[mr]
\bauthor{\bsnm{Pakdaman},~\bfnm{Khashayar}\binits{K.}},
\bauthor{\bsnm{Thieullen},~\bfnm{Mich{\`e}le}\binits{M.}} \AND
\bauthor{\bsnm{Wainrib},~\bfnm{Gilles}\binits{G.}}
(\byear{2012}).
\btitle{Asymptotic expansion and central limit theorem for multiscale
piecewise-deterministic {M}arkov processes}.
\bjournal{Stochastic Process. Appl.}
\bvolume{122}
\bpages{2292--2318}.
\bid{doi={10.1016/j.spa.2012.03.005}, issn={0304-4149}, mr={2922629}}
\end{barticle}
%
\bptok{imsref}%
% NOT OUTPUTED:
% issn = 0304-4149
% url = http://dx.doi.org/10.1016/j.spa.2012.03.005
% number = 6
% coden = STOPB7
% fjournal = Stochastic Processes and their Applications
\endbibitem

%b32 #&#
\bibitem{S68}
%
\begin{barticle}[mr]
\bauthor{\bsnm{Smith},~\bfnm{Walter~L.}\binits{W.L.}}
(\byear{1968}).
\btitle{Necessary conditions for almost sure extinction of a branching
process with random environment}.
\bjournal{Ann. Math. Statist}
\bvolume{39}
\bpages{2136--2140}.
\bid{issn={0003-4851}, mr={0237006}}
\end{barticle}
%
\bptok{imsref}%
% NOT OUTPUTED:
% issn = 0003-4851
% fjournal = Annals of Mathematical Statistics
\endbibitem

%b33 #&#
\bibitem{Vil09}
%
\begin{bbook}[mr]
\bauthor{\bsnm{Villani},~\bfnm{C{\'e}dric}\binits{C.}}
(\byear{2009}).
\btitle{Optimal Transport, Old and New}.
\bseries{Grundlehren der Mathematischen Wissenschaften [Fundamental
Principles of Mathematical Sciences]}
\bvolume{338}.
\blocation{Berlin}:
\bpublisher{Springer}.
\bid{doi={10.1007/978-3-540-71050-9}, mr={2459454}}
\end{bbook}
%
\bptok{imsref}%
% NOT OUTPUTED:
% isbn = 978-3-540-71049-3
% url = http://dx.doi.org/10.1007/978-3-540-71050-9
% fpage = xxii+973
\endbibitem

%b34 #&#
\bibitem{RS05}
%
\begin{barticle}[mr]
\bauthor{\bparticle{von} \bsnm{Renesse},~\bfnm{Max-K.}\binits
{M.-K.}} \AND
\bauthor{\bsnm{Sturm},~\bfnm{Karl-Theodor}\binits{K.-T.}}
(\byear{2005}).
\btitle{Transport inequalities, gradient estimates, entropy, and
{R}icci curvature}.
\bjournal{Comm. Pure Appl. Math.}
\bvolume{58}
\bpages{923--940}.
\bid{doi={10.1002/cpa.20060}, issn={0010-3640}, mr={2142879}}
\end{barticle}
%
\bptok{imsref}%
% NOT OUTPUTED:
% issn = 0010-3640
% url = http://dx.doi.org/10.1002/cpa.20060
% number = 7
% coden = CPAMA
% fjournal = Communications on Pure and Applied Mathematics
\endbibitem

\end{thebibliography}
\end{document}